\tikzstyle{vertex}=[circle,draw=black,fill=black,inner sep=0,minimum size=0.2cm,text=white,font=\footnotesize]
\newtheorem{theorem}{Theorem}
\newtheorem{lemma}{Lemma}
\newtheorem{corol}{Corollary}
\newtheorem{prb}{Problem}
\newcommand{\FO}{{\mathrm{FO}}}
\newcommand{\MSO}{{\mathrm{MSO}}}
\newcommand{\EHR}{{\mathrm{EHR}}}
\newcommand{\qd}{{\mathrm{q}}}
\title{Short monadic second order sentences about sparse random graphs}
\author{Andrey Kupavskii\thanks{Laboratory of Advanced Combinatorics and Network Applications, Moscow Institute of Physics and Technology; Emails: {\tt kupavskii@yandex.ru, zhukmax@gmail.com} \ \ Research supported by the grant RNF~16-11-10014.}\phantom{a\,}and Maksim Zhukovskii\footnotemark[1]}
\date{}
\begin{document}
\maketitle
\begin{abstract}
In this paper, we study zero-one laws for the Erd\H{o}s--R\'{e}nyi random graph model $G(n,p)$ in the case when $p = n^{-\alpha}$ for $\alpha>0$. For a given class $\mathcal{K}$ of logical sentences about graphs and a given function $p=p(n)$, we say that $G(n,p)$ obeys the zero-one law (w.r.t. the class $\mathcal{K}$) if each sentence $\varphi\in\mathcal{K}$ either a.a.s. true or a.a.s. false for $G(n,p)$. In this paper, we consider first order properties and monadic second order properties of bounded \textit{quantifier depth} $k$, that is, the length of the longest chain of nested quantifiers in the formula expressing the property. Zero-one laws for properties of quantifier depth $k$ we call the \textit{zero-one $k$-laws}.

The main results of this paper concern the zero-one $k$-laws for monadic second order properties (MSO properties). We determine all values $\alpha>0$, for which the zero-one $3$-law for MSO properties does not hold. We also show that, in contrast to the case of the $3$-law, there are infinitely many values of $\alpha$ for which the zero-one $4$-law for MSO properties does not hold. To this end, we analyze the evolution of certain properties of $G(n,p)$ that may be of independent interest.
\end{abstract}

\section{Introduction}\label{zero_one_intro}
In 1959, P. Erd\H os and A. R\' enyi, and independently E. Gilbert, introduced two closely related models for generating random graphs. A seminal paper of Erd\H os and R\'enyi \cite{Erdos}, that appeared one year later, brought a lot of attention to the subject, giving birth to the vast and ever-developing area of Erd\H os-R\'enyi random graphs. In spite of the name, the more popular model $G(n,p)$  is the one proposed by Gilbert. In this model, we have $G(n,p) = (V_n,E)$, where $V_n=\{1,\ldots,n\}$, and each pair of vertices is connected by an edge with probability $p$ and independently of other pairs. For more information, we refer readers to the books by B. Bollob\'as \cite{Bollobas} and S. Janson, T. \L uczak and A. Ruci\'{n}ski \cite{Janson}, entirely devoted to random graphs, as well as to the book of N. Alon and J. Spencer on probabilistic method \cite{AS}.

Studying zero-one laws requires some logical prerequisites. We review some of the basics in this paragraph, and refer the reader to~\cite{Logic2,Libkin,Strange,Veresh,Survey}. Formulae in the first order language of graphs (FO formulae) are constructed using relational symbols $\sim$ (interpreted as adjacency) and $=$, logical connectives $\neg,\rightarrow,\leftrightarrow,\vee,\wedge$, variables $x,y,x_1, \ldots$ that express vertices of a graph, quantifiers $\forall,\exists$ and parentheses $($, $)$.  Monadic second order, or MSO, formulae (see~\cite{Muller}, \cite{Tysk}) are built of the above symbols of the first order language, as well as the variables $X,Y,X_1,\ldots$ that are interpreted as unary predicates, i.e. {\it subsets} of the vertex set. Following \cite{Logic2}, \cite{Survey}, we call the number of nested quantifiers in a longest sequence of nested quantifiers of a formula $\varphi$ {\it the quantifier depth} $\qd(\varphi)$. Formulae must have finite length. For example, the MSO formula
\begin{equation}
 \forall X \quad\biggl(\biggl[\exists x_1\exists x_2\,\,
 (X(x_1)\wedge \neg X(x_2))\biggr]\rightarrow
 \biggl[\exists y \exists z\,\,(X(y)\wedge\neg X(z)\wedge y\sim z)\biggr]\biggr)
\label{connectedness_MSO}
\end{equation}
has quantifier depth $3$ and expresses the property of being connected. It is known that the property of being connected cannot be expressed by a FO formula (see, e.g.,~\cite{Survey}). The quantifier depth of a formula has the following algorithmic consequence: an FO formula of quantifier depth $k$ on an $n$-vertex graph can be verified in $O(n^k)$ time.


Many properties of graphs may be expressed via FO formulae. Somewhat surprisingly, Y. Glebskii, D. Kogan, M. Liogon'kii and V. Talanov in 1969, and independently R. Fagin in 1976, proved that \textit{any} FO formula is either asymptotically almost surely (a.a.s.) true or a.a.s. false for $G(n,1/2)$, as $n\to \infty$. In such a situation we say that $G(n,p)$ obeys the zero-one law for FO formulae.

More precisely, we say that  $G(n,p)$  {\it obeys the FO zero-one $k$-law} (resp. {\it the MSO zero-one $k$-law}) if any first order formula (resp. monadic second order formula) of quantifier depth $k$ is either a.a.s. true  or a.a.s. false  for $G(n,p)$. We say that $G(n,p)$ {\it obeys the FO zero-one law} ({\it the MSO zero-one law}) if it obeys the FO zero-one $k$-law (the MSO zero-one $k$-law) for any positive integer $k$.

In 1988 S.~Shelah and J.~Spencer~\cite{Shelah} proved the following zero-one law for the random graph $G(n,n^{-\alpha})$.
\begin{theorem}
Let $\alpha>0$. The random graph $G(n,n^{-\alpha})$ does not obey the FO zero-one law if and only if either $\alpha\in(0,1]\cap\mathbb{Q}$, or $\alpha=1+1/\ell$ for some integer $\ell$.
\label{FO01}
\end{theorem}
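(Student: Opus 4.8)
The plan is to prove the two implications separately. For the ``exceptional'' values of $\alpha$ we produce a single first order sentence whose probability in $G(n,n^{-\alpha})$ converges to a number strictly between $0$ and $1$; for every other $\alpha$ we show that the almost sure first order theory of $G(n,n^{-\alpha})$ is complete, so that every sentence is a.a.s.\ decided.

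\textbf{Failure of the law for exceptional $\alpha$.} Suppose $\alpha=1+1/\ell$ and let $T$ be any tree on $\ell+1$ vertices. Then $v(T)/e(T)=(\ell+1)/\ell=\alpha$, and since every forest on fewer vertices has strictly smaller edge-to-vertex ratio, $T$ is strictly balanced; moreover the expected number $\mathbb{E}[X_T]$ of copies of $T$ in $G(n,n^{-\alpha})$ tends to $1/|\mathrm{Aut}(T)|>0$. The Poisson approximation for the number of copies of a strictly balanced graph at its threshold (see \cite{Bollobas,Janson,AS}) then gives $X_T\xrightarrow{d}\mathrm{Pois}\bigl(1/|\mathrm{Aut}(T)|\bigr)$, so the sentence ``there exist $\ell+1$ distinct vertices inducing a copy of $T$'' has limiting probability $1-e^{-1/|\mathrm{Aut}(T)|}\in(0,1)$ and the zero-one law fails. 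If $\alpha=a/b\in(0,1]\cap\mathbb{Q}$, the same argument applies to a strictly balanced graph $H$ with $v(H)/e(H)=\alpha$: when $\alpha=1$ one may take any cycle, and when $\alpha<1$ one takes an explicit strictly balanced graph of edge-to-vertex ratio $b/a$ on sufficiently many vertices, such graphs existing for every rational ratio $\ge1$ (a classical construction; cf.\ \cite{Bollobas,AS}). Again $\mathbb{E}[X_H]\to1/|\mathrm{Aut}(H)|$ and strict balance forces a Poisson limit, so ``$G$ contains a copy of $H$'' has limiting probability in $(0,1)$.

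\textbf{The law holds: the sparse regime $\alpha>1$.} If $\alpha>2$ then a.a.s.\ $G(n,n^{-\alpha})$ has no edge and the law is trivial. If $\alpha\in(1+\tfrac1\ell,\,1+\tfrac1{\ell-1})$ for an integer $\ell\ge2$, a first-moment computation shows that a.a.s.\ $G(n,n^{-\alpha})$ is a forest all of whose components are trees on at most $\ell$ vertices: it suffices to forbid the finite family consisting of the cycles $C_3,\dots,C_\ell$ and the trees on $\ell+1$ vertices, each of which has expected number of copies tending to $0$. A second-moment (equivalently, Poisson) argument then shows that for each tree $T$ on at most $\ell$ vertices the number of components of $G(n,n^{-\alpha})$ isomorphic to $T$ is concentrated about its expectation, which tends to infinity. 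The resulting first order axioms describe a unique countable graph, namely the disjoint union of infinitely many copies of each tree on at most $\ell$ vertices; hence the almost sure theory is $\aleph_0$-categorical, therefore complete, and the zero-one law follows.

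\textbf{The law holds: $\alpha\in(0,1)$ irrational, and the main obstacle.} This is the substantial case. One develops the theory of $\alpha$-safe extensions: an extension of a rooted graph that adds $v$ vertices and $e$ edges is $\alpha$-\emph{safe} if $v'-\alpha e'>0$ for every intermediate extension, where $v',e'$ denote its numbers of added vertices and edges. Irrationality of $\alpha$ guarantees $v'-\alpha e'\ne0$ always, so every extension is either safe or ``dense'' (some intermediate extension has $v'-\alpha e'<0$). One then writes down the generic extension axioms — for every safe extension type, every copy of the root extends, together with a complementary family asserting the absence of dense extensions — and proves: (i) each axiom holds a.a.s.\ in $G(n,n^{-\alpha})$, via a first/second moment argument that builds a safe extension one vertex at a time; and (ii) the set $T^{\ast}$ of all these axioms is complete, which one establishes by showing that between any two models of $T^{\ast}$ Duplicator wins the $k$-round Ehrenfeucht--Fra\"iss\'e game, for every $k$, responding to each of Spoiler's moves with an extension axiom of complexity bounded in terms of $k$. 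Since any first order sentence of quantifier depth $k$ is decided by finitely many axioms of $T^{\ast}$, each a.a.s.\ true, it is a.a.s.\ decided, which is the zero-one law. The delicate points are exactly (i) and (ii), and it is here that irrationality of $\alpha$ is indispensable: a ``borderline'' extension, one with $v-\alpha e=0$, would appear a bounded expected number of times with no concentration, so Duplicator could rely neither on its being present nor on its being absent — this is the combinatorial reason that the rational values in $(0,1]$ and the values $1+1/\ell$ are exceptional.
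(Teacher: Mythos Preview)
The paper does not prove this theorem at all: it is quoted as the 1988 result of Shelah and Spencer and used as background (see the sentence preceding the theorem and reference~\cite{Shelah}). So there is no ``paper's own proof'' to compare against; your proposal is in effect a sketch of the original Shelah--Spencer argument, and in its broad strokes it is correct.

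A couple of remarks on the sketch itself. The sparse case $\alpha>1$, $\alpha\neq 1+1/\ell$, is handled essentially as in the paper's properties \textbf{T1}--\textbf{T3}$(\ell)$ and the $\aleph_0$-categoricity reasoning you give is the standard one. For the failure at rational $\alpha\in(0,1]$, the strictly balanced subgraph argument you outline is the right idea; just be aware that the existence of a strictly balanced graph with prescribed rational density, while classical, does require an explicit construction (and your phrase ``inducing a copy of $T$'' should be ``containing a copy of $T$'' --- induced versus not does not matter in the tree case since the ambient graph is a.a.s.\ a forest, but for $\alpha<1$ you want the non-induced containment sentence). The serious content is your third part, the irrational $\alpha\in(0,1)$: what you wrote is an accurate high-level summary of the safe/dense extension machinery, but it hides essentially all of the work --- the closure lemmas, the quantitative form of the extension statements needed to run the Ehrenfeucht game for $k$ rounds, and the delicate completeness argument for the theory $T^{\ast}$. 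As a roadmap it is fine; as a proof it would need to be filled in from \cite{Shelah} or \cite{Strange}.
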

Obviously, there is no MSO zero-one law when even the FO zero-one law does not hold, so neither does the MSO zero-one law hold for rational $\alpha\in(0,1]$ nor for $\alpha=1+1/\ell$.  In 1993, J.~Tyszkiewicz~\cite{Tysk} proved that $G(n,n^{-\alpha})$ does not obey the MSO zero-one law for irrational $\alpha\in(0,1)$ as well. However, for the only remaining possibility $\alpha>1$ and $\alpha\neq 1+1/\ell$, the MSO zero-one law {\it does} hold. This follows via a standard argument in the theory of logical equivalence (see the detailed proof of this corollary in~\cite{Zhuk_Logic}, Theorem 2). In the next theorem, we summarize the known results concerning the MSO zero-one law for $G(n,n^{-\alpha})$.
\begin{theorem}
Let $\alpha>0$. The random graph $G(n,n^{-\alpha})$ does not obey the MSO zero-one law if and only if either $\alpha\in(0,1]$ or $\alpha=1+\frac 1{\ell}$ for some integer $\ell$.
\label{MSO01}
\end{theorem}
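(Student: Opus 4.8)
The plan is to split the statement according to the value of $\alpha$ and to assemble it from Theorem~\ref{FO01}, the theorem of Tyszkiewicz~\cite{Tysk}, and a structural analysis of $G(n,n^{-\alpha})$ for $\alpha>1$. For the ``if'' direction I would first note that every $\FO$ sentence is an $\MSO$ sentence, so whenever the $\FO$ zero-one law fails the $\MSO$ zero-one law fails as well; by Theorem~\ref{FO01} this already covers all rational $\alpha\in(0,1]$ and all $\alpha=1+1/\ell$. The only remaining values are the irrational $\alpha\in(0,1)$, and for these one invokes Tyszkiewicz's result~\cite{Tysk}, which for each such $\alpha$ produces an $\MSO$ sentence whose probability on $G(n,n^{-\alpha})$ does not converge to $0$ or $1$. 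Together these two observations show that the $\MSO$ zero-one law fails for every $\alpha\in(0,1]$ and every $\alpha=1+1/\ell$.

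It then remains to prove the ``only if'' direction: if $\alpha>1$ and $\alpha\neq 1+1/\ell$ for all integers $\ell$, then $G(n,n^{-\alpha})$ obeys the $\MSO$ zero-one law. Fix such an $\alpha$ and put $K=K(\alpha)=\lceil 1/(\alpha-1)\rceil$; since $\alpha\neq 1+1/\ell$, we have $\alpha>1+1/K$ (and $\alpha<1+1/(K-1)$ when $K\ge 2$). The first step is the structural claim that a.a.s. $G(n,n^{-\alpha})$ is a disjoint union of trees, each on at most $K$ vertices, and that moreover for every tree $T$ on at most $K$ vertices and every fixed $m$ there are at least $m$ connected components isomorphic to $T$. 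Both parts are first-/second-moment computations: the expected number of copies of any fixed graph containing a cycle, and of any fixed tree on $K+1$ vertices, tends to $0$ (using $\alpha>1$ and $\alpha>1+1/K$, respectively), while for a fixed tree $T$ on $v\le K$ vertices the expected number of components isomorphic to $T$ is of order $n^{\,v-(v-1)\alpha}\to\infty$ (here $\alpha<1+1/(v-1)$, and copies of $T$ carrying an extra incident edge contribute only a lower-order term because $\alpha>1$); a Chebyshev bound then upgrades the divergence of the mean to an a.a.s. lower bound.

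The second step is the logical one, and is standard (see~\cite{Zhuk_Logic}, Theorem~2). There are finitely many trees on at most $K$ vertices, hence for each fixed quantifier depth $q$ only finitely many $\MSO_q$-equivalence classes among them; I would choose $m=m(q)$ large enough for the Feferman--Vaught-style composition lemma for disjoint unions (equivalently, for the $q$-round $\MSO$ Ehrenfeucht--Fra\"\i ss\'e game, in which Spoiler may also colour arbitrary vertex subsets). That lemma asserts that the $\MSO_q$-theory of a disjoint union is a function of the theories of the summands in which each multiplicity matters only up to the threshold $m(q)$; consequently any two graphs that are disjoint unions of trees on at most $K$ vertices, and in which every isomorphism type of such a tree occurs at least $m(q)$ times (or equally often in the two graphs), are $\MSO_q$-indistinguishable. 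By the structural claim, a.a.s. $G(n,n^{-\alpha})$ is such a graph with every relevant type occurring at least $m(q)$ times, so a.a.s. its $\MSO_q$-theory equals the theory of the fixed graph consisting of $m(q)$ disjoint copies of each tree on at most $K$ vertices; in particular every $\MSO$ sentence of quantifier depth $q$ is a.a.s. true or a.a.s. false. As $q$ was arbitrary, $G(n,n^{-\alpha})$ obeys the $\MSO$ zero-one law.

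The conceptual heart here, the composition/Ehrenfeucht--Fra\"\i ss\'e argument for $\MSO$ over disjoint unions of bounded pieces, is classical and would be cited rather than reproved. I expect the part that actually requires care to be the bookkeeping in the structural step: fixing the correct $K(\alpha)$, checking that for trees on exactly $v\le K$ vertices it is the number of \emph{components} (not merely the number of subgraph copies) that diverges, and verifying that nothing of edge-density $\ge 1$ survives; and, on the logical side, making sure the threshold $m(q)$ is taken with respect to $\MSO_q$-types (where Spoiler colours subsets) rather than merely $\FO_q$-types.
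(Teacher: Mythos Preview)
Your proposal is correct and matches the paper's own treatment: the paper does not prove Theorem~\ref{MSO01} from scratch but assembles it exactly as you do, invoking Theorem~\ref{FO01} for rational $\alpha\in(0,1]$ and $\alpha=1+1/\ell$, Tyszkiewicz~\cite{Tysk} for irrational $\alpha\in(0,1)$, and the ``standard argument in the theory of logical equivalence'' from~\cite{Zhuk_Logic}, Theorem~2, for $\alpha>1$ with $\alpha\neq 1+1/\ell$. Your structural step is precisely the content of properties \textbf{T1}, $\mathbf{T2(\ell)}$, $\mathbf{T3(\ell)}$ stated in Section~\ref{pre}, and your logical step is the composition argument the paper alludes to and cites rather than reproves.
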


For a formula $\varphi$, we use the notation $G\models\varphi$ if $\varphi$ is true for $G$. {\it The spectrum} of $\varphi$ is the set $\mathcal{S}(\varphi)$ defined by
$$
(0,\infty)\setminus\mathcal{S}(\varphi):=\{\alpha>0:\,\lim_{n\to\infty}{\sf P}(G(n,n^{-\alpha})\models\varphi)\text{ exists and equals }0\text{ or }1\}.
$$
J. Spencer proved \cite{Spencer_inf} that there exists an FO formula with infinite spectrum. Moreover, M. Zhukovskii \cite{Zhuk_inf} constructed an FO formula with infinite spectrum and of quantifier depth $5$.\\

In this paper, we construct an MSO formula of quantifier depth $4$ and with infinite spectrum, and show that this quantifier depth is smallest possible.
\begin{theorem}
There exists an MSO formula $\varphi$ with $\qd(\varphi)=4$ and infinite $S(\varphi)$.
\label{inf_spectrum}
\end{theorem}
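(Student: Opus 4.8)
The plan is to exhibit one MSO sentence $\varphi$ with $\qd(\varphi)=4$ whose spectrum contains infinitely many of the exponents at which the MSO zero-one law is already known to fail (Theorem~\ref{MSO01}); the most convenient such family is $\alpha_k=1+1/k$ with $k$ ranging over an infinite set. First I would split the work into a probabilistic half, producing for each $\alpha_k$ a structural event $E_k$ of $G(n,n^{-\alpha_k})$ whose limiting probability lies in $(0,1)$, and a logical half, producing a single bounded-depth sentence that is equivalent to $E_k$ on the a.a.s.\ structure of $G(n,n^{-\alpha_k})$, uniformly in $k$.

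For the probabilistic half I would use that $G(n,n^{-\alpha})$ is a.a.s.\ a forest for $\alpha>1$, and that at $\alpha=\alpha_k$ one has $np=n^{-1/k}\to 0$, so a first-moment computation shows: a.a.s.\ every connected component is a tree on at most $k+1$ vertices; every tree on at most $k$ vertices occurs abundantly as a component; and for each fixed tree $T$ on exactly $k+1$ vertices the number of components isomorphic to $T$ converges in distribution to $\Pois(1/|\mathrm{Aut}(T)|)$ (the second moment and the contribution of the non-forest events are $o(1)$, and a Chen--Stein estimate makes the Poisson limit precise). Hence, taking $T_k$ to be a tree on $k+1$ vertices whose automorphism group stays bounded along the family --- a path, for which $|\mathrm{Aut}(T_k)|=2$, is the natural choice --- the event $E_k$ that $G$ has a connected component isomorphic to $T_k$ satisfies $\lim_n{\sf P}(G(n,n^{-\alpha_k})\in E_k)=1-e^{-1/|\mathrm{Aut}(T_k)|}\in(0,1)$. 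Since, at $\alpha=\alpha_k$, $E_k$ agrees a.a.s.\ with the intrinsically-described event that $G$ has a component which is a path on the largest number of vertices occurring among all components, it suffices to express this last event by a fixed MSO sentence of quantifier depth $4$.

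The hard part will be the logical half: writing down one sentence $\varphi$ with $\qd(\varphi)=4$ which, on a forest whose largest components have $k+1$ vertices, holds exactly when $E_k$ does, and does so simultaneously for every $k$ in the family. The resource I would exploit is that ``$X$ is a connected component'' is MSO-expressible with quantifier depth $3$ (relativise the depth-$3$ sentence~\eqref{connectedness_MSO} to $X$ and conjoin the depth-$2$ clause ``$X$ is closed under adjacency''), so that inside a single block $\exists X(\dots)$ one still has a quantifier-depth-$3$ budget for constraining $X$ further; on forests, moreover, the acyclicity clauses that would otherwise be needed can be dropped. The difficulty is twofold. First, a quantifier-depth-$3$ budget is \emph{not} enough for the obvious constraint: recognizing that $X$ induces a path requires bounding its maximum degree, which already costs quantifier depth $4$, so $\varphi$ cannot simply say ``$\exists X$ ($X$ is a path that is a whole component)''. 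Second, and more fundamentally, $\varphi$ must not hard-code the number $k+1$ --- that would make $\qd(\varphi)$ grow with $k$ --- so it has to express ``$X$ is a component of the maximal occurring type'' intrinsically. I expect the resolution to be a non-obvious encoding, likely of universal ($\forall X$) form and using the available quantifiers in a way that does not amount to describing the shape of any single component outright, so that on a forest with largest components of size $k+1$ the sentence becomes equivalent to the presence or absence of the hovering path-component. Pinning down such an encoding, checking that it really has quantifier depth $4$, and checking that it is correct on the atypical forests as well, is the crux of the proof. Once $\varphi$ is in hand the theorem follows at once: the probabilistic half gives $\lim_n{\sf P}(G(n,n^{-\alpha_k})\models\varphi)\in(0,1)$ for every $k$ in the infinite family, so $\{\alpha_k\}\subseteq S(\varphi)$, whence $S(\varphi)$ is infinite while $\qd(\varphi)=4$.
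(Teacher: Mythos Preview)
Your proposal leaves the decisive step open: you identify the construction of $\varphi$ as ``the crux of the proof'', note that the natural encodings (e.g.\ bounding the degree inside $X$) overflow the depth-$4$ budget, and then rely on the hope of ``a non-obvious encoding'' without producing one. That is a genuine gap; the whole content of the theorem is the existence of such a sentence, and nothing in your outline indicates how the two acknowledged obstacles (recognising a path within depth $3$ after the set quantifier, and expressing maximality of size without hard-coding $k$) are to be overcome. There is also a smaller error in the probabilistic half: $E_k$ does \emph{not} agree a.a.s.\ with ``$G$ has a path component of maximal occurring size'', since with positive limiting probability no tree on $k+1$ vertices appears at all, in which case the maximal component size is $k$, the path $P_k$ does occur a.a.s., and your intrinsic event holds while $E_k$ fails. (The intrinsic event still has a limit in $(0,1)$, so this is repairable, but the equivalence you assert is false as stated.)

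The paper takes a completely different route and sidesteps the search for a forest-based encoding altogether. It works in the dense window $\alpha=\tfrac12+\tfrac1{2(m+1)}\in(\tfrac12,1)$, $m\in\mathbb N$, not at $\alpha>1$. The formula is written down explicitly by compressing a depth-$5$ FO sentence $\varphi^{\mathrm{FO}}$ from~\cite{Zhuk_inf}, already known (Lemma~\ref{from_4_to_5}) to have limit probability in $(0,1)$ at each such $\alpha$: the two outer first-order quantifiers $\exists x_1\exists x_2$ of $\varphi^{\mathrm{FO}}$ are traded for a single monadic $\exists X$, with $X$ playing the role of $\{x_1\}\cup(N(x_1)\cap N(x_2))$, and the inner references to $x_1,x_2$ rewritten in terms of $X$. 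One checks that $\{G:G\models\varphi\}=\{G:G\models\varphi^{\mathrm{FO}}\}$ and that $\qd(\varphi)=4$; infiniteness of $S(\varphi)$ is then immediate from the cited lemma, with no new probabilistic work required.
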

Moreover, we find \textit{all} values of $\alpha$ for which the MSO zero-one $3$-law does not hold.
\begin{theorem}
Let $\alpha>0$. The random graph $G(n,n^{-\alpha})$ does not obey the MSO zero-one $3$-law if and only if $\alpha\in\{\frac 45,\frac56,1,\frac98,\frac76,\frac65,\frac54,\frac43,\frac32,2\}$.
\label{3-law}
\end{theorem}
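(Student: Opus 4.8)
\medskip
\noindent\textbf{Proof plan.}
The statement has an ``if'' part (each of the ten listed values is a point of failure of the $\MSO$ zero-one $3$-law) and an ``only if'' part (the law holds for every other $\alpha>0$); these are handled, respectively, by exhibiting sentences of quantifier depth $3$ and by a $3$-round Ehrenfeucht--Fra\"{\i}ss\'{e} ($\EHR$) game analysis. For the ``if'' part the backbone is the classical fact (see, e.g., \cite{Bollobas}) that if $H$ is strictly balanced and $\alpha=1/m(H)$, with $m(H)=\max_{\emptyset\neq H'\subseteq H}e(H')/v(H')$, then the number of copies of $H$ in $G(n,n^{-\alpha})$ converges in distribution to a Poisson law of positive mean, so ``$G$ contains a copy of $H$'' has limiting probability strictly between $0$ and $1$; the plan is to realize, for each of the ten $\alpha$, such an $H$ by an $\MSO$ sentence of quantifier depth $3$ that is (a.a.s.\ for that $\alpha$) equivalent to ``$G\supseteq H$''. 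For $\alpha=2,\tfrac32,1,\tfrac43$ one takes $H$ to be an edge, $P_3$, a triangle, $P_4$, expressed by the depth-$\le 3$ $\FO$ sentences ``$G$ has an edge'', ``$G$ has a vertex of degree $\ge2$'', ``$G$ has a triangle'', ``$G$ has an edge both of whose endpoints have degree $\ge2$''; one verifies (using that for $\alpha\ge1$ there are a.a.s.\ no short cycles, so these sentences express containment of the stated tree) that the Poisson mean is positive.

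The six remaining values $\tfrac45,\tfrac56,\tfrac98,\tfrac76,\tfrac65,\tfrac54$ are the crux. For $\tfrac54,\tfrac65,\tfrac76,\tfrac98$ the relevant strictly balanced $H$ is a tree on $5$, $6$, $7$, $9$ vertices (in the range $\alpha>1$, where $G(n,n^{-\alpha})$ is a.a.s.\ a forest with components of bounded size), and for $\tfrac45,\tfrac56$ it is the diamond $K_4-e$, respectively a graph on $5$ vertices and $6$ edges such as $K_{2,3}$ or the bowtie (in the range $\alpha<1$). Since a naive expression of ``$G\supseteq H$'' has quantifier depth $v(H)>3$, the plan is to \emph{compress} it using a monadic quantifier: a set variable absorbs part of $H$, and the remaining first-order quantifiers, together with the a.a.s.\ structure of $G(n,n^{-\alpha})$ (a forest with short components when $\alpha>1$; no $K_4$ and no subgraph of density $>1/\alpha$, plus connectedness, when $\alpha<1$), pin down the rest within quantifier depth $3$. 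Showing that such a compression exists for exactly these thresholds — but not for $\tfrac87$, nor for $1+\tfrac1\ell$ with $\ell\ge9$, nor for their $\alpha<1$ counterparts — is the combinatorial heart of this direction.

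For the ``only if'' part, fix $\alpha>0$ outside the ten-element set; we must show that the depth-$3$ $\MSO$-type of $G(n,n^{-\alpha})$ is a.a.s.\ the same for all large $n$. The plan is, for each range of $\alpha$, to produce a ``canonical'' random structure $\Gamma_n$ whose depth-$3$ type stabilizes and a Duplicator strategy in the $3$-round $\MSO$ $\EHR$-game showing $G(n,n^{-\alpha})$ and $\Gamma_n$ are $3$-equivalent with probability tending to $1$. The ranges: $\alpha>2$ ($G$ a.a.s.\ edgeless --- immediate); $\alpha\in(1,2)$ ($G$ a.a.s.\ a forest with all components of size $\le\lceil(\alpha-1)^{-1}\rceil$, and, at the ``hidden'' thresholds $\tfrac87$ and $1+\tfrac1\ell$, $\ell\ge9$, a Poisson number of top-size components, which the crucial claim says are invisible to a $3$-round game); $\alpha\in(0,1)$ ($G$ a.a.s.\ connected, of diverging minimum degree, containing in profusion exactly those $H$ with $m(H)<1/\alpha$ and no $H$ with $m(H)>1/\alpha$). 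In each range Duplicator answers Spoiler's (possibly monadic) first move by matching the appropriate profile --- component sizes in the forest range, counts of small subgraphs up to a fixed threshold in the dense range --- and then exploits abundance to win the remaining two vertex-rounds.

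The main obstacle is this $\EHR$-game analysis for the forest range (and symmetrically for the dense range): one must prove that a single set-move by Spoiler cannot separate two forests, resp.\ two graphs in the dense regime, that agree on all ``depth-$3$-visible'' substructures but differ in their Poisson-governed parts --- which is precisely what keeps $\tfrac87$ and $1+\tfrac1\ell$ ($\ell\ge9$) and their $\alpha<1$ analogues \emph{out} of the list, while the compression constructions keep the listed values \emph{in}. Making ``depth-$3$-visible'' precise and matching the upper and lower bounds so that the list is pinned down exactly is where essentially all the work is; the Poisson-limit computations and the uniformity in $n$ are by comparison routine.
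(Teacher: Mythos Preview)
Your ``if'' direction is essentially the paper's: for each of the ten $\alpha$ you name a strictly balanced $H$ with $1/\rho^{\max}(H)=\alpha$ and (plan to) write a depth-$3$ $\MSO$ sentence that, under the a.a.s.\ constraints at that $\alpha$, is equivalent to $H\subseteq G$. The specific graphs you list (edge, $P_3$, triangle, $P_4$, trees on $5,6,7,9$ vertices, diamond, bowtie) match the paper's choices, and the paper gives the explicit sentences.

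The ``only if'' direction has a genuine gap. You propose that Duplicator match ``component sizes in the forest range, counts of small subgraphs up to a fixed threshold in the dense range''. Neither invariant is strong enough once Spoiler plays a \emph{set} in round one: Spoiler's $X$ may cut every component, so component multiplicities of $G$ are irrelevant; and in the dense range $X$ can be of any cardinality, so subgraph counts do not control what $X$ looks like as a subset. What actually works is a classification of \emph{vertices relative to a subset}: call $v\in X$ inside-dominating / inside-common / inside-isolated according to whether $v$ is adjacent to all / some-but-not-all / none of $X\setminus\{v\}$, and similarly outside-dominating / outside-common / outside-isolated with respect to $\bar X$. The \emph{type} of the pair $(X,\bar X)$ is the pair of sets of vertex-types realized in $X$ and in $\bar X$. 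The key lemma is that if $(X,\bar X)$ and $(Y,\bar Y)$ have the same type, Duplicator wins the remaining two rounds; hence the $3$-law holds at $\alpha$ iff every pair-type has limiting probability $0$ or $1$.

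For $\alpha<1$ the paper then carries out a full case analysis (about thirty pair-types survive the easy constraints), showing with several ad hoc probabilistic lemmas that each appears with probability $0$ or $1$ except exactly when the diamond (at $\alpha=4/5$) or the bowtie (at $\alpha=5/6$) is at threshold---these are precisely the configurations that let $\bar X$ simultaneously have an $X$-dominating and an $X$-isolated vertex while $X$ is dense/common. For $\alpha=1+1/\ell$ with $\ell\ge 7$, $\ell\ne 8$, Duplicator does not match types by a counting argument but by \emph{constructing} $Y$: from the type of $(X,\bar X)$ he builds an abstract forest $T$ (starting from an edge, a $P_3$, two edges, or two points according as $X$ is complete/dense/common/independent, then attaching pendants to realize the needed outside-types) and locates an induced copy of $T$ among components of $H$. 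The punchline is that the largest component of $T$ always has at most $7$ vertices---\emph{except} in one case where it is forced to be the specific $9$-vertex tree of your $\alpha=9/8$ construction; this is exactly what makes $\ell=7$ succeed and $\ell=8$ fail while $\ell\ge 9$ succeed again. Your plan does not isolate this mechanism, and without it you cannot explain why $8/7$ is out but $9/8$ is in.
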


Note that $G(n,n^{-1-1/\ell})$ does not obey the MSO zero-one 3-law for all $\ell\in\{1,2,\ldots,8\}$ except for $\ell=7$.

In contrast to the case of the MSO zero-one $3$-law,  the random graph $G(n,n^{-\alpha})$ obeys the FO zero-one 3-law for all $\alpha\in(0,1)$~(see \cite{Zhuk_law}, Theorem 3). If $\alpha=1$, the FO zero-one 3-law fails (since the probability of being triangle-free tends to $e^{-1/6}$ --- see, e.g., Theorem~\ref{threshold} --- and `being triangle-free' is expressible in FO language with quantifier depth 3). By Theorem~\ref{FO01}, the zero-one $3$-law holds for all $\alpha>1$ such that $\alpha\neq 1+1/\ell$ for any positive integer $\ell$. In this paper, we find the remaining part of the full spectrum of FO formulae of quantifier depth $3$. It differs slightly from the ($\alpha>1$)-part of the spectrum for the MSO formulae of depth $3$.
\begin{theorem}
Let $\alpha>0$. The random graph $G(n,n^{-\alpha})$ does not obey the FO zero-one 3-law if and only if $\alpha\in\{1,\frac{7}{6},\frac{6}{5},\frac{5}{4},\frac{4}{3},\frac{3}{2},2\}$.
\label{FO-new}
\end{theorem}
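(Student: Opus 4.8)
The plan is to treat the two implications separately and to use the cited results to cut the problem down to the points $\alpha=1+1/\ell$. For $\alpha\in(0,1)$ the $\FO$ zero-one $3$-law holds by \cite{Zhuk_law} (Theorem 3); for $\alpha>1$ with $\alpha\neq 1+1/\ell$ the \emph{full} zero-one law, hence the $3$-law, holds by Theorem~\ref{FO01}; and for $\alpha=1$ the sentence ``$G$ is triangle-free'' has quantifier depth $3$ while, by Theorem~\ref{threshold}, ${\sf P}(G(n,n^{-1})\text{ is triangle-free})\to e^{-1/6}$, so the $3$-law fails. (Throughout, one uses that conjoining with $\exists x\exists y\exists z\,(x=x)$ turns any sentence of quantifier depth $\le 3$ into an equivalent one of depth exactly $3$.) So everything reduces to $\alpha=1+1/\ell$, $\ell\in\mathbb N$: failure for $\ell\in\{1,\dots,6\}$ and the $3$-law for $\ell\ge 7$. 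The backbone of the whole argument is the following picture of $G(n,n^{-(1+1/\ell)})$ — the ``evolution'' analysis the abstract advertises: a.a.s.\ it is a forest whose components are trees on at most $\ell+1$ vertices; every tree on at most $\ell$ vertices occurs as a component arbitrarily often; and for each tree $T$ on exactly $\ell+1$ vertices the number of components isomorphic to $T$ converges to a Poisson law with positive mean.

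For the failure part I would exhibit, for each $\ell\le 6$, an $\FO$ sentence $\varphi_\ell$ with $\qd(\varphi_\ell)=3$ and $1+1/\ell\in\mathcal S(\varphi_\ell)$. The small cases fix the template: $\varphi_1$ is $\exists x\exists y\,(x\sim y)$ and ${\sf P}(G(n,n^{-2})\models\varphi_1)\to 1-e^{-1/2}$; $\varphi_2$ is ``there is a vertex of degree $\ge 2$'', i.e.\ $\exists x\exists y\exists z\,(x\sim y\wedge x\sim z\wedge y\neq z)$, equivalent to ``there is a copy of the path $P_3$'' and with a nontrivial limit at $\alpha=3/2$; $\varphi_3$ is ``there are two adjacent vertices of degree $\ge 2$'', which on a forest is equivalent to ``there is a copy of $P_4$'' (threshold $4/3$); $\varphi_4$ is ``$\exists x$: $\deg x\ge 2$ and every neighbour of $x$ has degree $\ge 2$'', which on a forest forces a copy of $P_5$ and at $\alpha=5/4$ is equivalent to ``there is a component isomorphic to $P_5$'' (threshold $5/4$). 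What keeps the depth at $3$ is that each of these uses only a bounded exploration around a single vertex plus at most one clause universally quantified over its neighbourhood. For $\ell=5,6$ I would build $\varphi_5,\varphi_6$ in the same spirit but with the depth-$3$ clauses packed more tightly, so that on the typical forest above they become equivalent to a statement of the form ``there is a component of a prescribed small isomorphism type''; the convergence ${\sf P}(\cdot)\to c\in(0,1)$ then comes from the Poisson part of the evolution picture.

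For $\ell\ge 7$ I would prove the $3$-law by showing that two independent copies $G(n,n^{-\alpha})$ and $G(m,m^{-\alpha})$ with $\alpha=1+1/\ell$ are a.a.s.\ indistinguishable by depth-$3$ sentences; by the usual coupling argument this amounts to Duplicator a.a.s.\ winning the $3$-round Ehrenfeucht--Fra\"iss\'e game on them, and that implies the zero-one $3$-law. The key point is that a $3$-round game only reveals a bounded-radius neighbourhood of the pebbled vertices together with degree information truncated at ``$\ge 2$'', which is too little to certify membership in a component of size $\ge\ell+1\ge 8$. To make this precise I would enumerate the finitely many ``$3$-round local types'' of vertices and small tuples in these sparse forests, show that the set of realized types is a.a.s.\ the same for both graphs (the abundant small trees realize every ``generic'' type, while a pebble put into an exceptional $(\ell+1)$-vertex component can always be matched inside one of the abundant small trees because the game cannot probe deeply enough to tell them apart), and then spell out Duplicator's responses round by round.

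I expect the $3$-round $\EHR$-game analysis for $\ell\ge 7$ to be the main obstacle: Duplicator's strategy has to be checked against \emph{every} sequence of Spencer's moves, and the delicate case is when Spencer keeps all three pebbles inside one exceptional large component and tries to exploit its size — this is exactly where ``depth $3$ cannot see a component of size $\ge 8$'' must be converted into a complete finite case analysis. The secondary difficulty is constructing $\varphi_5$ and $\varphi_6$, where depth $3$ is only just sufficient so the clauses must be designed very carefully, together with the accompanying probability estimates, which require controlling the joint distribution of several small-subtree counts at once rather than a single one.
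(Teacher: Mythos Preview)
Your plan is correct and matches the paper's proof: the same reduction to $\alpha=1+1/\ell$, explicit depth-$3$ sentences pinning down a specific $(\ell+1)$-vertex tree for each $\ell\le 6$, and a Duplicator strategy in the $3$-round Ehrenfeucht game for $\ell\ge 7$. For the two places you flag as hard, the paper (i) writes $\varphi_5,\varphi_6$ explicitly using the auxiliary predicates $P_2(x,y)=[\exists z\,(x\sim z\wedge y\sim z)]\wedge x\neq y$ and $S(x)=\exists y\exists z\,(x\sim y\wedge x\sim z\wedge y\neq z)$, and (ii) replaces a brute-force type enumeration by a single invariant: Duplicator answers $x_1$ with a vertex whose \emph{leaf-distance set} agrees with $L(x_1)$ on $\{0,1,2\}$ and on whether $L(x_1)\setminus\{0,1,2\}\neq\varnothing$ --- such a vertex always lives in some tree on at most $7$ vertices --- and then in round two preserves $\min(d(x_1,x_2),3)$ together with ``$x_2$ is a leaf'' and ``$x_2$ is isolated''.
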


The statement of Theorem~\ref{3-law} (in comparison with Theorem~\ref{FO-new}) includes three extra values of $\alpha$, namely, $\frac{4}{5},\frac{5}{6},\frac{9}{8}$. It is not so unexpected, since the MSO language (even when we consider only the sentences of quantifier depth at most 3) is much more expressive than the FO language: recall that the fragment of the MSO under study is rich enough since, in particular, connectedness is expressed by the MSO sentence~(\ref{connectedness_MSO}) of quantifier depth 3, in contrast to the respective fragment of the FO. However, it is quite surprising that we have a gap in the values of $\alpha$ of the form $1+1/\ell$: the MSO zero-one 3-law holds for $\ell=7$ but fails for $\ell=8$.
Let us briefly describe the intuition behind both effects.

In the cases $\alpha=\frac{4}{5}$ and $\alpha=\frac{5}{6}$, there are certain graphs $F_{4/5}$ and $F_{5/6}$ such that a.a.s. the existence of a copy of $F_{4/5}$ and a copy of $F_{5/6}$ can be expressed by existential MSO sentences with one monadic quantifier and with the FO parts of quantifier depth two,\footnote{Such sentences appear because in FO with quantifier depth $2$, given a set $X$, the following properties can be expressed and are non-trivial for our purposes: 1) the subgraph induced on $X$ has an isolated (or a universal) vertex, 2) there is a vertex outside $X$ which is a common neighbor of all the vertices in $X$, see Section~\ref{proof_simple_MSO} for details.} in contract with the FO language. (In particular, it is well known that, in order to express the property of containing an isomorphic copy of $F$ in FO, one needs the quantifier depth to be at least the number of vertices in $F$~\cite{VerbZhuk}.) The existence of these graphs implies that there is no MSO zero-one 3-law in these cases (the crucial thing here is that the {\it densities} of these graphs, i.e., halves of the average degrees, are $4/5$ and $5/6$, respectively, cf. Theorem~\ref{threshold}).

For $\alpha=\frac{\ell+1}{\ell}>1$, a.a.s. the random graph is a forest (cf. Section~\ref{pre}). Therefore, we need to deal with sentences that expresses existential properties on acyclic graphs. As in the previous case, these are the `subgraph existence' properties for subgraphs that have densities $1/\alpha$ and thus are trees on $\ell+1$ vertices. The graph on Figure~\ref{figt} has exactly $9$ vertices and can be expressed by an existential MSO sentence with 1 monadic quantifier and with the FO part of quantifier depth 2 (see Section~\ref{proof_simple_MSO}), which exploits some symmetry properties of this graph. This cannot be done for any tree on 8 vertices as none of them has similar properties.\\

By Theorem~\ref{FO-new}, the minimal $k$ for which there exists an FO formula of quantifier depth $k$ and with  infinite spectrum is either $4$ or $5$. In~\cite{Zhuk_Logic}, it is proved that for any $k\geq 4$ and any FO formula of quantifier depth $k$ the intersection of its spectrum with $(1,\infty)$ is finite. Therefore, it is natural to ask the following question:

\begin{prb} Does there exist a FO formula $\varphi$ of quantifier depth $4$ such that $\mathcal{S}(\varphi)\cap (0,1)$ is infinite?
\end{prb}
Note that we do not consider the trivial case $\mathrm{q}(\varphi)=1$. If $\mathrm{q}(\varphi)=2$, then $S(\varphi)\subset\{2\}$ (everywhere in the paper, we write $\subset$ for (not necessarily strict) set-inclusion) for every MSO sentence $\varphi$ (this trivially follows from Ehrenfeucht theorem, see Theorem~\ref{ehren} in Section~\ref{pre}). All the above results are summarized in Table~\ref{Main_Results}.

Theorems \ref{inf_spectrum}, \ref{3-law}, and \ref{FO-new} are proved in Sections \ref{proof_simple} and \ref{proof2}. In Section \ref{proof_simple}, we construct formulae that show that the corresponding zero-one laws in the three theorems do not hold for the declared values of $\alpha$. More precisely, Theorem~\ref{inf_spectrum} is proved in Section~\ref{proof1}; in Sections~\ref{proof_simple_FO},~\ref{proof_simple_MSO}, we prove sufficiency in Theorems~\ref{FO-new},~\ref{3-law} respectively. In Section \ref{proof2}, we prove that the zero-one laws from Theorems \ref{3-law}, \ref{FO-new} hold for all the values of $\alpha$, not covered in Section \ref{proof_simple}. In particular, in Section~\ref{sec411}, we finish the proof of Theorem~\ref{FO-new}.

\begin{table}[!ht]
\centering
\begin{tabular}{|c|c|c|}

  \hline

  $k$ & $\left|\bigcup_{\FO\,\varphi:\,\qd(\phi)=k}\mathcal{S}(\varphi)\right|$   &  $\left|\bigcup_{\MSO\,\varphi:\,\qd(\phi)=k}\mathcal{S}(\varphi)\right|$  \\

  \hline

  $2$ & $\{2\}$  & $\{2\}$ \\

  \hline

  $3$ & $\{1,\frac76,\frac65,\frac54,\frac43,\frac32,2\}$  & $\{\frac 45,\frac56,1,\frac98,\frac76,\frac65,\frac54,\frac43,\frac32,2\}$ \\

  \hline

  $4$ & finite or not?  & $\infty$ \\

  \hline

  $5$ & $\infty$  & $\infty$ \\

  \hline

\end{tabular}
\vspace{-0.2cm}

\caption{\small Spectra for different values of quantifier depth.}
\label{Main_Results}
\end{table}

\section{Preliminaries}
\label{pre}

We start with the necessary  notations and auxiliary statements.\\ 

Throughout this paper, we denote the vertex set of a graph $G$ by $V(G)$ and its edge set by $E(G)$ (i.e., $G=(V(G),E(G))$). For the random graph $G(n,p)$, we simply put $G(n,p) := (V_n,E)$.
The degree of $v\in V(G)$ is denoted by $\deg(v)$. \\




We need two concentration results about $G(n,p)$, embodied in the next two lemmas. The first one concerns the degrees of the vertices in $G(n,p)$.
\begin{lemma}
Let $p=n^{-\alpha}$ for some $0<\alpha<1$. Then for some $c>0$, a.a.s. for any $v\in V_n$ we have
$$
\deg(v)\in \left[ n^{1-\alpha}-cn^{1/2-\alpha/2}\sqrt{\ln n},n^{1-\alpha}+cn^{1/2-\alpha/2}\sqrt{\ln n}\right].
$$
\label{degrees}
\end{lemma}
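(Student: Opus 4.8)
The plan is to establish this concentration result via a standard union bound over the $n$ vertices, combined with a Chernoff-type tail estimate for each individual degree. For a fixed vertex $v\in V_n$, the degree $\deg(v)$ is a sum of $n-1$ independent Bernoulli random variables, each equal to $1$ with probability $p=n^{-\alpha}$, so $\deg(v)$ is binomially distributed with mean $\mu=(n-1)p=(n-1)n^{-\alpha}$, which is $n^{1-\alpha}(1+o(1))$. First I would record the Chernoff bound in the form ${\sf P}(|\deg(v)-\mu|\geq t)\leq 2\exp(-t^2/(3\mu))$, valid for $0\leq t\leq \mu$, and then substitute $t=cn^{1/2-\alpha/2}\sqrt{\ln n}$; note $t^2/\mu$ is of order $c^2\ln n$, so the probability that a given vertex deviates by more than $t$ is at most $2\exp(-c^2\ln n/3)=2n^{-c^2/3}$.

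The second step is the union bound: the probability that \emph{some} vertex $v\in V_n$ has $\deg(v)$ outside the claimed interval is at most $n\cdot 2n^{-c^2/3}=2n^{1-c^2/3}$, which tends to $0$ as soon as we choose $c$ with $c^2>3$, e.g.\ $c=2$. This gives exactly the a.a.s.\ statement. One small technical point to be careful about is the difference between $\mu=(n-1)n^{-\alpha}$ and the centering value $n^{1-\alpha}$ used in the lemma: the gap $|\mu-n^{1-\alpha}|=n^{-\alpha}$ is of much smaller order than the window width $cn^{1/2-\alpha/2}\sqrt{\ln n}$ (since $-\alpha<1/2-\alpha/2$ for $\alpha<1$), so after possibly enlarging $c$ slightly one can absorb this discrepancy and state the bound around $n^{1-\alpha}$ rather than around the exact mean. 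Similarly one should check the side condition $t\leq\mu$ of the Chernoff inequality holds for large $n$, which again follows from $1/2-\alpha/2<1-\alpha$ being false---wait, rather from $n^{1/2-\alpha/2}\sqrt{\ln n}=o(n^{1-\alpha})$, i.e.\ $1/2-\alpha/2<1-\alpha$, equivalently $\alpha<1$, which is our hypothesis.

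I do not expect any genuine obstacle here; this is a routine concentration-plus-union-bound argument. The only thing demanding a modicum of care is bookkeeping the constants and the lower-order terms so that the final interval is centered at $n^{1-\alpha}$ exactly as stated, and making sure the chosen $c$ simultaneously (i) makes $2n^{1-c^2/3}\to 0$ and (ii) is large enough to swallow the $n^{-\alpha}$ mean-shift; both are easily arranged, e.g.\ any $c\geq 2$ works for large $n$. If one prefers to avoid invoking Chernoff in the asymmetric regime near the endpoint $t=\mu$, an alternative is to use the two-sided Bernstein inequality for bounded independent summands, which gives the same conclusion with essentially the same computation.
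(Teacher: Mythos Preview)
Your argument is correct: Chernoff plus a union bound over the $n$ vertices gives exactly the stated window, and your handling of the two technical points (the $n^{-\alpha}$ shift between $\mu$ and $n^{1-\alpha}$, and the side condition $t\le\mu$) is fine. The only cosmetic issue is the stream-of-consciousness ``wait'' mid-sentence; in a final write-up just state directly that $t=o(\mu)$ follows from $\tfrac12-\tfrac\alpha2<1-\alpha$, i.e.\ $\alpha<1$.

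As for comparison with the paper: the paper does not prove this lemma at all but simply cites it as Corollary~3.4 in Bollob\'as' \emph{Random Graphs}. So your proposal is not merely ``the same approach'' --- it is a self-contained proof where the paper gives none. The underlying argument in Bollob\'as is essentially the same Chernoff-plus-union-bound computation you carried out, so there is no substantive methodological difference; you have simply spelled out what the citation is pointing to.
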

See  the proof in~\cite{Bollobas}, Corollary 3.4.
The next lemma is concerned with the cardinality $\alpha(G(n,p))$ of the largest independent set in $G(n,p)$.
\begin{lemma}\label{ind_set} Let $p = n^{-\alpha}$, $0<\alpha<1$. Then a.a.s.
$$
 \alpha(G(n,p))=2(1-\alpha)n^{\alpha}\ln n-2n^{\alpha}\ln\ln n +O\left(n^{\alpha}\right).
$$
\end{lemma}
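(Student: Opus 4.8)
I would prove both the upper and the lower bound by the method of moments, applied to $X_{r}$, the number of independent sets of size $r$ in $G(n,p)$, for which ${\sf E}X_{r}=\binom{n}{r}(1-p)^{\binom{r}{2}}$; write $\Phi(r):=\ln{\sf E}X_{r}$. The first task is to locate the threshold. Put $r_{0}:=2(1-\alpha)n^{\alpha}\ln n-2n^{\alpha}\ln\ln n$. Stirling's formula gives $\ln\binom{n}{r_{0}}=r_{0}\bigl(\ln(n/r_{0})+1\bigr)+o(n^{\alpha})$, and $-\ln(1-p)=n^{-\alpha}+O(n^{-2\alpha})$ gives $\binom{r_{0}}{2}\bigl(-\ln(1-p)\bigr)=\tfrac12 r_{0}^{2}n^{-\alpha}+O((\ln n)^{2})$; the form of $r_{0}$ is exactly what makes the leading terms of the two expressions cancel in $\Phi(r_{0})=\ln\binom{n}{r_{0}}-\binom{r_{0}}{2}(-\ln(1-p))$, leaving $\Phi(r_{0})=O(n^{\alpha}\ln n)$. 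Next, differentiating in a continuous variable, $\Phi'(r)=\ln\tfrac{n-r}{r}-(r-\tfrac12)(-\ln(1-p))+O(1/r)$, and since $r\cdot(-\ln(1-p))=2\ln(n/r)+O(1)$ for $r$ near $r_{0}$, one gets $\Phi'(r)=-(1-\alpha+o(1))\ln n$ uniformly on an interval of width $O(n^{\alpha})$ around $r_{0}$. Consequently there is an absolute constant $C$ with $\Phi(r_{0}+Cn^{\alpha})\to-\infty$ and $\Phi(r_{0}-Cn^{\alpha})\to+\infty$; in fact ${\sf E}X_{r_{0}-Cn^{\alpha}}=e^{\Omega(n^{\alpha}\ln n)}$.

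\emph{Upper bound.} Since ${\sf E}X_{\lceil r_{0}+Cn^{\alpha}\rceil}\to0$, Markov's inequality gives a.a.s. $\alpha(G(n,p))<r_{0}+Cn^{\alpha}+1$.

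\emph{Lower bound.} Set $r:=\lfloor r_{0}-Cn^{\alpha}\rfloor$, so ${\sf E}X_{r}\to\infty$. By Chebyshev's inequality it suffices to prove ${\sf E}X_{r}^{2}=(1+o(1))({\sf E}X_{r})^{2}$. Grouping ordered pairs of $r$-subsets by the size $i$ of their intersection gives
\[
{\sf E}X_{r}^{2}=\sum_{i=0}^{r}\binom{n}{r}\binom{r}{i}\binom{n-r}{r-i}(1-p)^{2\binom{r}{2}-\binom{i}{2}},
\]
so dividing by $({\sf E}X_{r})^{2}=\binom{n}{r}^{2}(1-p)^{2\binom{r}{2}}$, using Vandermonde's identity $\sum_{i}\binom{r}{i}\binom{n-r}{r-i}=\binom{n}{r}$, and noting that $\binom{i}{2}=0$ for $i\le1$, we obtain
\[
\frac{{\sf E}X_{r}^{2}}{({\sf E}X_{r})^{2}}=1+\sum_{i=2}^{r}w(i)\Bigl((1-p)^{-\binom{i}{2}}-1\Bigr),\qquad w(i):=\frac{\binom{r}{i}\binom{n-r}{r-i}}{\binom{n}{r}}.
\]
Thus everything reduces to showing $\sum_{i=2}^{r}w(i)\bigl((1-p)^{-\binom{i}{2}}-1\bigr)=o(1)$. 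The $w(i)$ form a hypergeometric distribution with mean $r^{2}/n$, while $(1-p)^{-\binom{i}{2}}\le\exp\bigl(\binom{i}{2}\tfrac{p}{1-p}\bigr)$ is increasing in $i$; I would bound this sum by splitting $\{2,\dots,r\}$ into a few ranges and estimating $w(i)$ on each of them by Stirling, the range of $i$ near $r$ being harmless since the single term $i=r$ equals $1/{\sf E}X_{r}=o(1)$.

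\emph{Main obstacle.} This overlap estimate is the heart of the proof and is genuinely $\alpha$-sensitive. For $\alpha<1/2$ the typical overlap $r^{2}/n=\Theta(n^{2\alpha-1}(\ln n)^{2})$ tends to $0$, the dominant terms are those with small $i$ (where $(1-p)^{-\binom{i}{2}}$ is close to $1$), and crude bounds suffice. For $\alpha\ge1/2$, by contrast, the typical overlap tends to infinity, and one needs a large-deviation bound for the hypergeometric distribution to show that the super-polynomially small probability of an atypically large intersection outweighs the growth of $(1-p)^{-\binom{i}{2}}$. Once this is in hand, Chebyshev's inequality gives a.a.s. $\alpha(G(n,p))\ge r_{0}-Cn^{\alpha}-1$, and combined with the upper bound this yields $\alpha(G(n,p))=2(1-\alpha)n^{\alpha}\ln n-2n^{\alpha}\ln\ln n+O(n^{\alpha})$ a.a.s. (The lower bound can alternatively be obtained from Janson's inequality, which reduces to the same estimate; and the statement itself can be extracted from known results on the independence number of sparse random graphs.)
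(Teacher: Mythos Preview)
The paper does not actually prove this lemma: it simply cites Bollob\'as, \emph{Random Graphs}, Theorem~11.28. Your proposal, by contrast, sketches a genuine proof via the first and second moment methods applied to $X_r$, which is in fact the standard route (and essentially the one in Bollob\'as). Your threshold calculation and the derivative estimate $\Phi'(r)=-(1-\alpha+o(1))\ln n$ are correct, and the upper bound via Markov is immediate. For the lower bound you correctly identify the overlap sum $\sum_{i\ge 2} w(i)\bigl((1-p)^{-\binom{i}{2}}-1\bigr)$ as the heart of the matter and note the $\alpha\ge 1/2$ regime requires a hypergeometric large-deviation bound, but you stop short of carrying this out; since the paper defers entirely to a reference, your sketch is already more than what the paper provides, and the remaining overlap estimate is indeed routine once one splits into the ranges you indicate.
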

For the proof see  \cite{Bollobas}, Theorem 11.28.\\

\textbf{Small subgraphs of the random graph}
\label{pre_graphs}\vskip+0.2cm


Consider a graph $G$ on $v$ vertices and $e$ edges.   The fraction $\rho(G) = \frac{e}{v}$ is called {\it the density} of $G$. Set $\rho^{\max}(G)=\max_{H\subset G}\rho(H)$.  The threshold probability for the property ``$G(n,p)$ contains $G$ as a subgraph'' was determined by B.~Bollob\'{a}s in 1981~\cite{Bol_small_thrshld,Bol_small}. Moreover, the limiting probability of the property at the threshold was estimated in 1989~\cite{BolWier}. Denote by $N_G$ the number of (not necessarily induced) copies of $G$ in $G(n,p)$.


\begin{theorem}
If $p\gg n^{-1/\rho^{\max}(G)}$, then a.a.s. $G(n,p)$ contains a (not necessarily induced) copy of $G$. Moreover, $\frac{N_G}{{\sf E}N_G}\stackrel{{\sf P}}\rightarrow 1$ (for every $\varepsilon>0$, ${\sf P}(|N_G/{\sf E}N_G-1|>\varepsilon)\to 0$ as $n\to\infty$). If $p\ll n^{-1/\rho^{\max}(G)}$, then a.a.s. $G(n,p)$ does not contain any copy of $G$. If $p=c n^{-1/\rho^{\max}(G)}$ for some constant $c>0$, then we have $N_G> 0$ with asymptotic probability $\xi(G)$, where $0<\xi(G)<1$.
\label{threshold}
\end{theorem}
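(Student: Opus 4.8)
\emph{Proof plan.} Write $v=|V(G)|$, $e=|E(G)|$, and for a subgraph $H\subseteq G$ let $v_H=|V(H)|$, $e_H=|E(H)|$; recall $\rho^{\max}(G)=\max_{H\subseteq G}e_H/v_H$. Since $G$ is fixed, ${\sf E}N_H=\Theta(n^{v_H}p^{e_H})$ for every $H\subseteq G$, and the expected number of copies of $G$ containing a fixed copy of a subgraph $H$ is $\Theta(n^{v-v_H}p^{e-e_H})$, all constants depending only on $G$. The three regimes will be handled by the first moment method, the second moment method, and correlation inequalities, respectively. If $p\ll n^{-1/\rho^{\max}(G)}$, pick $H\subseteq G$ with $e_H/v_H=\rho^{\max}(G)$; then $p^{e_H}\ll n^{-v_H}$, so ${\sf E}N_H=o(1)$, and Markov's inequality shows that a.a.s.\ $G(n,p)$ contains no copy of $H$, hence none of $G$. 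If $p\gg n^{-1/\rho^{\max}(G)}$, then $\rho^{\max}(G)\ge e/v$ gives $p\gg n^{-v/e}$, so ${\sf E}N_G=\Theta(n^v p^e)\to\infty$; expanding ${\sf E}N_G^2$ over ordered pairs of copies of $G$ grouped by the isomorphism type $H\subseteq G$ of their overlap $\bigl(V(\hat G)\cap V(\hat G'),E(\hat G)\cap E(\hat G')\bigr)$, the type-$H$ term equals $\Theta\bigl(({\sf E}N_G)^2\,n^{-v_H}p^{-e_H}\bigr)$, which is $(1+o(1))({\sf E}N_G)^2$ for the empty overlap and tends to $0$ for every other $H$, since then $e_H/v_H\le\rho^{\max}(G)$ forces $p\gg n^{-v_H/e_H}$ (or $e_H=0\le v_H-1$), hence $n^{-v_H}p^{-e_H}\to 0$. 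Thus $\mathrm{Var}(N_G)=o\bigl(({\sf E}N_G)^2\bigr)$, and Chebyshev's inequality yields $N_G/{\sf E}N_G\stackrel{{\sf P}}{\rightarrow}1$; in particular a.a.s.\ $N_G>0$.

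\emph{The critical regime.} Let $p=cn^{-1/\rho^{\max}(G)}$ and put $\mu={\sf E}N_G=\Theta\bigl(n^{v-e/\rho^{\max}(G)}\bigr)$; since $\rho^{\max}(G)\ge e/v$, we have $v-e/\rho^{\max}(G)\ge 0$, so $\mu=\Omega(1)$. Fix a subgraph $H_0\subseteq G$ of density $\rho^{\max}(G)$ and of minimum size; one readily checks that $H_0$ is then strictly balanced and an induced subgraph of $G$, and ${\sf E}N_{H_0}=\Theta(1)$. To see $\xi(G)<1$: since $\{N_{H_0}=0\}\subseteq\{N_G=0\}$ and the events ``a given copy of $H_0$ is absent'' are decreasing, the FKG inequality gives ${\sf P}(N_G=0)\ge{\sf P}(N_{H_0}=0)\ge\prod_{\hat H_0}\bigl(1-{\sf P}(\hat H_0\text{ present})\bigr)=e^{-(1+o(1)){\sf E}N_{H_0}}=e^{-\Theta(1)}>0$. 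To see $\xi(G)>0$: apply Janson's inequality to the family of copies of $G$, with $\Delta=\sum_{\hat G\neq\hat G',\ \hat G\sim\hat G'}{\sf P}(\hat G,\hat G'\text{ both present})$; the overlap-type computation above shows $\Delta=\Theta(\mu^2)$ at this $p$, because the overlap types $H$ with $e_H/v_H=\rho^{\max}(G)$ now contribute $\Theta(\mu^2)$ rather than $o(\mu^2)$ (and no type contributes more). Since $\mu=\Omega(1)$ and $\Delta=\Theta(\mu^2)$, Janson's inequality (in the form ${\sf P}(N_G=0)\le e^{-\mu+\Delta/2}$, sharpened to $e^{-\mu^2/(2\Delta)}$ when $\Delta\ge\mu$) gives ${\sf P}(N_G=0)\le e^{-\Omega(1)}<1$. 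Hence $\liminf_n{\sf P}(N_G>0)>0$ and $\limsup_n{\sf P}(N_G>0)<1$.

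\emph{Existence of the limit, and the main obstacle.} It remains to show that $\lim_n{\sf P}(N_G>0)$ exists; this is the delicate point, and the plan is to reproduce the route of the cited Bollob\'as--Wierman estimate. First, prove that the vector of counts of the (finitely many) strictly balanced density-$\rho^{\max}(G)$ subgraphs of $G$ converges in distribution to a vector of independent Poisson random variables --- via the method of factorial moments, equivalently the Stein--Chen method, where strict balance is precisely what makes the $k$-th factorial moments converge to the $k$-th powers of the means (overlapping $k$-tuples of copies becoming negligible). Second, by a rooted second-moment (or Janson-type) concentration argument show that the strictly sparser remainder of $G$ attaches to these dense ``cores'' in an asymptotically deterministic way, so that $\{N_G>0\}$ coincides, up to an event of vanishing probability, with a fixed event determined by the Poisson-distributed core counts; its probability then converges to a constant in $(0,1)$. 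The main obstacle is exactly this second step together with the moment control in the first: one needs to bound \emph{all} factorial moments and, when $G$ is not balanced, to pass from $G$ to its densest subgraphs, and the attachment argument becomes combinatorially heavier when $G$ is disconnected or has several subgraphs of maximum density, where the reattachment of a sparse piece may itself be at its threshold and must be handled by a Paley--Zygmund-type lower bound rather than a concentration statement. The sub- and supercritical cases, by contrast, are routine first/second moment computations, and the bounds $0<\xi(G)<1$ at the threshold fall out of the FKG and Janson inequalities once one observes $\Delta=\Theta(\mu^2)$ there.
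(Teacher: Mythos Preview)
The paper does not prove this theorem. It is quoted as a known result: the text preceding it attributes the threshold to Bollob\'{a}s (1981) and the limiting probability at the threshold to Bollob\'{a}s--Wierman (1989), and immediately after the statement the reader is referred to Chapter~3 of Janson--\L uczak--Ruci\'{n}ski for a proof. There is therefore nothing in the paper to compare your argument against; the theorem plays the role of a black box used repeatedly in Sections~\ref{pre}--\ref{proof2}.

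That said, your sketch is the standard route and is essentially sound. The sub- and supercritical cases are exactly the first/second moment computations one finds in the references. Your FKG bound for $\xi(G)<1$ via a strictly balanced $H_0\subseteq G$ of maximum density is correct (minimality of $|V(H_0)|$ forces strict balance, and then ${\sf E}N_{H_0}=\Theta(1)$). For $\xi(G)>0$, your observation that at $p=cn^{-1/\rho^{\max}(G)}$ every overlap type $H$ contributes $\Theta\bigl(\mu^2 n^{-v_H}p^{-e_H}\bigr)=O(\mu^2)$, with equality at $H=H_0$, is correct, and the extended Janson inequality then gives ${\sf P}(N_G=0)\le e^{-\mu^2/(2\Delta)}=e^{-\Theta(1)}$. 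You have also correctly located the only genuinely delicate point: the \emph{existence} of $\lim_n{\sf P}(N_G>0)$, which in the general (unbalanced, possibly several maximum-density subgraphs) case requires the Poisson convergence of the vector of counts of strictly balanced densest subgraphs together with a conditional argument for the sparser remainder --- precisely the content of the Bollob\'{a}s--Wierman paper cited. Since the present paper only \emph{uses} Theorem~\ref{threshold} and never proves it, your plan goes well beyond what is required here.
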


{\it Remark}. Under the assumption that $p=o(1)$, the statement of the theorem holds also for induced copies of $G$ (this can be easily derived from the non-induced case --- see the proof of Lemma~\ref{maximal_extension} below). The proof of this theorem and other important results about the distribution of small subgraphs of the random graph can be found, e.g., in Chapter 3 of \cite{Janson}.

The next lemma follows easily from Theorem~\ref{threshold}.
\begin{lemma}
Let $\alpha>0$. Fix an integer $k$, where $k>1/\alpha$, and let $H$ be a graph on $k$ vertices. Suppose that $\rho^{\max}(H)< 1/\alpha$. Then a.a.s. in $G(n,n^{-\alpha})$ there exists an induced subgraph $H'$ isomorphic to $H$, such that the vertices of $H'$ have no common neighbors outside $H'$.
\label{maximal_extension}
\end{lemma}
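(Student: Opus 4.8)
The plan is a first-moment comparison between the number of induced copies of $H$ in $G(n,n^{-\alpha})$ and the number of \emph{bad} induced copies, i.e.\ those whose vertex set does admit a common neighbour outside the copy; showing the former strictly dominates the latter then yields the required $H'$. Write $p=n^{-\alpha}$ and let $e(H)$ be the number of edges of $H$. Everything rests on two facts: (i)~since $\rho(H)\le\rho^{\max}(H)<1/\alpha$, the expected number of copies of $H$ is of polynomial order; indeed $n^{k}p^{e(H)}=n^{k-\alpha e(H)}\to\infty$ because $\alpha\rho(H)<1$; and (ii)~since $k>1/\alpha$ we have $\alpha k>1$, so $np^{k}=n^{1-\alpha k}\to 0$.

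First I would check that a.a.s.\ the number $N_H^{\mathrm{ind}}$ of induced subgraphs of $G(n,n^{-\alpha})$ isomorphic to $H$ is $\Theta(n^{k}p^{e(H)})$ --- this is exactly the passage from the non-induced to the induced case mentioned in the Remark after Theorem~\ref{threshold}. Since $p\gg n^{-1/\rho^{\max}(H)}$, Theorem~\ref{threshold} gives $N_H=(1+o(1)){\sf E}N_H$ a.a.s.\ for the number $N_H$ of not-necessarily-induced copies, with ${\sf E}N_H=\Theta(n^{k}p^{e(H)})$. On the other hand $N_H-N_H^{\mathrm{ind}}$ is at most $\sum N_{H'}$, the sum over the (constantly many) graphs $H'$ obtained from $H$ by adding a single non-edge; since each such $H'$ still has $k$ vertices but $e(H)+1$ edges, ${\sf E}N_{H'}=\Theta(n^{k}p^{e(H)+1})=o({\sf E}N_H)$ as $p\to 0$, so by Markov this difference is a.a.s.\ negligible against ${\sf E}N_H$. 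Hence a.a.s.\ $N_H^{\mathrm{ind}}\ge\tfrac12{\sf E}N_H=\Theta(n^{k}p^{e(H)})$.

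Next I would bound the number $B$ of bad induced copies. Each bad copy contributes at least one pair $(\text{copy of }H,\,w)$ with $w$ outside the copy and adjacent to all $k$ of its vertices, so $B$ is at most the number of such pairs, whose expectation is $\Theta(n^{k+1}p^{e(H)+k})$. Its ratio to ${\sf E}N_H$ is $\Theta(np^{k})=\Theta(n^{1-\alpha k})\to 0$ by (ii), so Markov's inequality (with a slack factor that is a small fixed power of $n$) gives $B=o(n^{k}p^{e(H)})$ a.a.s. Combining, a.a.s.\ the number of good induced copies $N_H^{\mathrm{ind}}-B$ is strictly positive, and any such copy is the desired $H'$. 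I do not expect a genuine obstacle: the argument is a routine first-moment estimate hinging on the single inequality $\alpha k>1$, and the step needing the most care is the non-induced-to-induced reduction in the first paragraph, where one uses that adding an edge to $H$ leaves the vertex count at $k$ while strictly raising the power of $p$, so that the extra-edge copies are of lower order and the concentration is simply inherited from Theorem~\ref{threshold}.
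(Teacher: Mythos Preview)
Your proof is correct and follows essentially the same first-moment comparison as the paper: both show that the number of induced copies of $H$ (which concentrates around $\Theta(n^{k}p^{e(H)})$ by Theorem~\ref{threshold}) asymptotically dominates the number of ``bad'' copies, namely those sitting inside a strict spanning supergraph of $H$ or under a common outside neighbour. Your direct use of Markov's inequality for the bad graphs is in fact a touch cleaner than the paper's three-case split on $\rho^{\max}(\tilde H)$, which invokes concentration and a monotonicity trick where a plain first-moment bound already suffices.
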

Indeed, consider the following set of graphs $\mathcal{H}$ produced from the graph $H$: one graph in $\mathcal{H}$ is obtained from $H$ by adding an extra common neighbor of all the vertices in $H$, and all the others contain $H$ as a spanning subgraph. Let $\tilde H\in\mathcal{H}$. If $\rho^{\max}(\tilde H)>1/\alpha$, then, by Theorem~\ref{threshold}, a.a.s. there are no copies of $\tilde H$ in $G(n,n^{-\alpha})$. If $\rho^{\max}(\tilde H)<1/\alpha$, then there are copies of $\tilde H$ but ${\sf E}N_{\tilde H}\ll{\sf E}N_H$, and then the same a.a.s. holds for $N_H$, $N_{\tilde H}$ by Theorem~\ref{threshold}. Finally, if $\rho^{\max}(\tilde H)=1/\alpha$, then we can consider an $\varepsilon>0$ such that the expected number of copies of $H$ in $G(n,n^{-\alpha})$ is asymptotically bigger than the number of copies of $\tilde H$ in $G(n,n^{-\alpha+\varepsilon})$. In this case, by Theorem~\ref{threshold}, a.a.s. $N_H$ is asymptotically bigger than $N_{\tilde H}$ as well since the property of containing a copy of given subgraph is increasing (see, e.g.,~\cite{Janson}). As $\mathcal{H}$ is finite and its cardinality does not depend on $n$, we get the desired statement.

Indeed, for any collection  $X$ of $k$ vertices, the probability that they have a common neighbor tends to $0$, and this event is independent of the structure of the induced subgraph on $X$. Thus, any given copy of $H$ in $G(n,n^{-\alpha})$ has no common neighbor with probability tending to $1$.



We use the notation $\mathbb N:=\{1,2,\ldots\}$. From Theorem~\ref{threshold}, it follows that if $\ell\in \mathbb N$ and $\alpha\in (1+\frac 1{\ell+1},1+\frac 1{\ell})$, then the following three properties hold:
\begin{itemize}
\item[\textbf{T1}\phantom{$(\ell)$}] The random graph $G(n,n^{-\alpha})$ is a forest a.a.s.
\item[$\mathbf{T2(\ell)}$] A.a.s. any component of $G(n,n^{-\alpha})$ has at most $\ell+1$ vertices.
\item[$\mathbf{T3(\ell)}$] For any integer $K$, a.a.s. for any tree $T$ on at most $\ell+1$ vertices there are at least $K$ components in $G(n,n^{-\alpha})$ which are isomorphic to $T$.
\end{itemize}
If $\ell\in \mathbb N$ and $\alpha=1+\frac 1{\ell}$, then the properties T1, T2$(\ell)$, T3$(\ell-1)$ hold, as well as
\begin{itemize}
\item[$\mathbf{T4(\ell)}$] For any tree $T$ on $\ell+1$ vertices $N_T>0$ with probability $\xi(T)$, $0<\xi(T)<1$.
\end{itemize}

Finally, we need two extension statements. The first lemma is an easy corollary of Spencer's results \cite{Spencer_ext}.
\begin{lemma}
Let $\alpha>0$. Choose a non-negative integer $k$, $k<1/\alpha$, and a positive integer $m$, $m\ge k$. Then a.a.s.  for any vertices $x_1,\ldots,x_m$ in $G(n,n^{-\alpha})$ there is a vertex $z$ which is adjacent to each of $x_1,\ldots,x_k$ and not adjacent to each of $x_{k+1},\ldots,x_m$.
\label{extension}
\end{lemma}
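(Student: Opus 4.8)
The plan is to prove the statement directly, by a first‑moment computation combined with independence and a union bound, rather than by unwinding Spencer's machinery. Throughout put $p=n^{-\alpha}$ and note $p=o(1)$. Fix pairwise distinct vertices $x_1,\ldots,x_m\in V_n$ (the statement is only meaningful for such tuples, and there are at most $n^m$ of them). For each $z\in V_n\setminus\{x_1,\ldots,x_m\}$ let $A_z$ be the event that $z$ is adjacent to each of $x_1,\ldots,x_k$ and to none of $x_{k+1},\ldots,x_m$. Since $A_z$ concerns only the $m$ potential edges joining $z$ to the $x_i$, we have ${\sf P}(A_z)=p^k(1-p)^{m-k}$; moreover the events $\{A_z:z\notin\{x_1,\ldots,x_m\}\}$ are mutually independent, because they involve pairwise disjoint sets of potential edges.

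Next I would bound the probability that the extension fails for this particular tuple. By independence and the inequality $1-t\le e^{-t}$,
$$
{\sf P}\Bigl(\bigcap_z \overline{A_z}\Bigr)=\bigl(1-p^k(1-p)^{m-k}\bigr)^{\,n-m}\le\exp\bigl(-(n-m)\,p^k(1-p)^{m-k}\bigr).
$$
Because $p\to 0$ and $m$ is a fixed constant, $(1-p)^{m-k}\ge\tfrac12$ for all large $n$, while $p^k=n^{-\alpha k}$; hence $(n-m)\,p^k(1-p)^{m-k}\ge\tfrac13\,n^{1-\alpha k}$ for $n$ large, and the failure probability for a fixed tuple is at most $\exp(-\tfrac13 n^{1-\alpha k})$.

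Finally I would take a union bound over all at most $n^m$ ordered tuples $(x_1,\ldots,x_m)$: the probability that \emph{some} tuple admits no valid extension vertex $z$ is at most $n^m\exp(-\tfrac13 n^{1-\alpha k})$. The hypothesis $k<1/\alpha$ gives $1-\alpha k>0$, so this quantity tends to $0$ as $n\to\infty$, which is exactly the claimed a.a.s.\ statement. The case $k=0$ (extension to a common non‑neighbour) is covered verbatim, with the factor $p^k$ replaced by $1$, so that the relevant exponent becomes $1>0$.

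The only point that requires care — and the only place the hypotheses are used in an essential way — is the last step: the exponent $1-\alpha k$ must be \emph{strictly} positive, so that the stretched‑exponential decay $\exp(-\tfrac13 n^{1-\alpha k})$ beats the polynomial union‑bound factor $n^m$. This is precisely why $k<1/\alpha$ is assumed and not merely $k\le 1/\alpha$ (if $1/\alpha$ were an integer equal to $k$, the bound $n^m e^{-c}$ would not vanish). Everything else — disjointness of the edge sets, the elementary probability estimate, and the union bound — is routine.
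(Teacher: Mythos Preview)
Your argument is correct. The paper does not give its own proof of this lemma but simply cites it as an easy corollary of Spencer's extension results; the direct first-moment plus union-bound computation you carry out is precisely the standard proof of such extension statements, so your approach is the natural unpacking of that citation rather than a genuinely different route.
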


The second lemma is a particular case of Theorem 2 from \cite{Zhuk_neutral}.

\begin{lemma}
Let $\alpha\ge 1/2$. Choose two integers $m$ and $k$, where $k\in \{0,1\}$ and $m\ge 1$. Then a.a.s.  for any vertices $x_1,\ldots,x_m$ in $G(n,n^{-\alpha})$ there is a vertex $z$ which is adjacent to each of $x_1,\ldots,x_k$ and not adjacent to each of $x_{k+1},\ldots,x_m$, that additionally satisfies $\big(N(z)\cap N(x_i)
\big)\setminus\{x_1,\ldots,x_m\}=\varnothing$ for each $i=1,\ldots, m$.
\label{no_neighb_ext}
\end{lemma}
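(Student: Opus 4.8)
This is an extension axiom of the kind used to run Ehrenfeucht games on $G(n,n^{-\alpha})$, strengthened by the neutrality clause that the new vertex $z$ has no common neighbour with any $x_i$ outside $X:=\{x_1,\dots,x_m\}$. As usual for extension axioms, the plan is to fix the tuple $(x_1,\dots,x_m)$, to bound the probability $q_n$ that no admissible $z$ exists, and then to take the union bound over the at most $n^m$ tuples; so it suffices to prove $q_n=o(n^{-m})$, which means we must work with exponentially small failure probabilities rather than with plain ``a.a.s.'' statements (as in the proof of Lemma~\ref{extension}).

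Fix the tuple and first expose all edges between $X$ and $V_n\setminus X$. This reveals the set $C$ of \emph{candidates} --- vertices $z\notin X$ with $z\sim x_i$ for $i\le k$ and $z\not\sim x_i$ for $k<i\le m$ --- each vertex lying in $C$ independently with probability $\pi:=p^{k}(1-p)^{m-k}\asymp n^{-k\alpha}$, so a Chernoff bound gives $|C|\ge\tfrac12 n\pi$ with probability $1-e^{-\Omega(n\pi)}$. It also reveals $S:=\bigl(\bigcup_{i=1}^{m}N(x_i)\bigr)\setminus X$, and a Chernoff bound for a single binomial degree shows $|S|\le\sum_i\deg(x_i)=O(n^{1-\alpha}+m\log n)$ with probability $1-o(n^{-m})$. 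The point of the neutrality clause is that, \emph{conditionally on this first exposure}, a candidate $z$ is an admissible neutral extension exactly when $z$ has no neighbour in $S$, an event of conditional probability $(1-p)^{|S|}$; and since $p\,|S|=O(np^{2})+o(1)=O(n^{1-2\alpha})+o(1)$, the hypothesis $\alpha\ge\tfrac12$ makes this $O(1)$, so $(1-p)^{|S|}\ge c$ for some constant $c=c(m)>0$ (with $c\to1$ when $\alpha>\tfrac12$). This is precisely where $\alpha\ge\tfrac12$ enters: for $\alpha<\tfrac12$ the sets $N(x_i)$ have size $n^{1-\alpha}$ with $(n^{1-\alpha})^{2}\gg n$, so almost every $z$ has a common neighbour with some $x_i$.

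If $k=0$ the candidates avoid $S$, so the events ``$z$ is an admissible neutral extension'' ($z\in C$) depend on pairwise disjoint edge sets and are independent given the first exposure; hence the probability that no candidate works is at most $(1-c)^{|C|}\le(1-c)^{n\pi/2}=e^{-\Omega(n)}=o(n^{-m})$, and the union bound finishes this case. If $k=1$ the candidates lie inside $S$ (they are neighbours of $x_1$), so the edge $zz'$ couples the events for distinct $z,z'\in C$; to decouple I would expose, in a second round, the edges from $C$ to $S\setminus S_1$ where $S_1:=N(x_1)\setminus X\supseteq C$, keep the candidates with no neighbour there (an independent screening, each surviving with probability $\ge c$, so $\Omega(n^{1-\alpha})$ survive with probability $1-e^{-\Omega(n^{1-\alpha})}$), and finally expose the edges inside $S_1$: this induced graph is a random graph on $\asymp n^{1-\alpha}$ vertices with edge probability $p$, hence of expected degree $|S_1|p=O(n^{1-2\alpha})=O(1)$, so a positive proportion of its vertices are isolated, and --- via the (concentrated) number of edges inside $S_1$ together with a bounded-difference estimate for the number of vertices they cover --- a positive proportion of the surviving candidates remains isolated in $G[S_1]$, with failure probability $e^{-\Omega(n^{1-\alpha})}$. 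Any such vertex has no neighbour in $S=S_1\cup(S\setminus S_1)$ and is the desired $z$; summing the failure probabilities and taking the union bound finishes the case $k=1$. (For $k=1$ the claim is only meaningful when $n^{1-\alpha}\to\infty$, i.e.\ $\alpha<1$; otherwise an isolated vertex $x_1$ would violate it.)

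I expect the technical heart to be exactly this last step for $k=1$: controlling the edges \emph{inside} the candidate set when $\alpha$ is close to $\tfrac12$, where $G[S_1]$ is not sparse enough to be edgeless and one must show that a constant proportion of the already screened candidates stays isolated, while keeping every failure probability at $o(n^{-m})$. Everything else --- the candidate count, the bound on $|S|$, and the estimate $(1-p)^{|S|}\ge c$ --- is routine concentration. As noted right after the statement, the lemma is a special case of Theorem~2 of~\cite{Zhuk_neutral}, which one may instead simply cite.
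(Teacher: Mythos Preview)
The paper does not give a proof of this lemma; it simply records that it is a particular case of Theorem~2 of~\cite{Zhuk_neutral}. Your proposal therefore goes well beyond what the paper does, and the overall architecture --- union bound over tuples, staged edge exposure, Chernoff for the candidate count and for $|S|$, and the observation that $(1-p)^{|S|}\ge c>0$ precisely when $\alpha\ge\tfrac12$ --- is the right one and matches how such ``neutral extension'' statements are proved.

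There is, however, a genuine soft spot in your $k=1$ endgame at the boundary $\alpha=\tfrac12$. You write that a positive proportion of the surviving candidates remain isolated in $G[S_1]$ ``via the (concentrated) number of edges inside $S_1$ together with a bounded-difference estimate for the number of vertices they cover'', with failure probability $e^{-\Omega(n^{1-\alpha})}$. At $\alpha=\tfrac12$ one has $|S_1|\asymp n^{1/2}$ and $|S_1|p\asymp 1$, so the expected number of edges in $G[S_1]$ is $\asymp\tfrac12|S_1|$; the crude bound ``non-isolated $\le 2\cdot(\text{edges})$'' is then of order $|S_1|$ and kills nothing. If instead you apply McDiarmid to the $\binom{|S_1|}{2}$ edge indicators (each changing the count of non-isolated $C'$-vertices by at most~$2$), the sum of squared differences is $\asymp|S_1|^2$, and the deviation bound at scale $t=\Theta(|S_1|)$ is only $e^{-\Theta(1)}$ --- not $o(n^{-m})$. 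For $\alpha>\tfrac12$ your edge-count argument does work, since then the number of edges is $o(|S_1|)$ with probability $1-e^{-\Omega(|S_1|)}$; the issue is exactly the critical point $\alpha=\tfrac12$ that you yourself flag as delicate.

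The fix is to replace McDiarmid by Talagrand's inequality in its certifiable-Lipschitz form: the function $g=$ ``number of $C'$-vertices non-isolated in $G[S_1]$'' is $2$-Lipschitz in the edge indicators and the event $\{g\ge s\}$ is witnessed by $s$ edges (one per non-isolated vertex). Talagrand then gives
\[
\mathsf P\bigl(g\ge \mathsf E g+t\bigr)\le 2\exp\!\left(-\Omega\!\left(\frac{t^{2}}{\mathsf E g+t}\right)\right),
\]
and since $\mathsf E g\asymp|C'|$ and $|C'|\asymp|S_1|$, taking $t=\varepsilon|C'|$ yields failure probability $e^{-\Omega(|S_1|)}=e^{-\Omega(n^{1-\alpha})}$, which is $o(n^{-m})$ as required. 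With this amendment your sketch is complete; your closing remark that one may instead simply cite~\cite{Zhuk_neutral} is exactly what the paper does.
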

\vskip+0.3cm

\textbf{Ehrenfeucht game}\vskip+0.2cm

An important tool for many of the results on zero-one laws is the Ehrenfeucht game~\cite{AS,Janson,Strange,Survey}. The game $\EHR^{\FO}(A,B,k)$~\cite{Logic2,Libkin,Ehren,Veresh} is played on graphs $A$ and $B$. There are two players, called Spoiler and Duplicator, and a fixed number of rounds $k$. At the $\nu\mbox{-}$th round ($1 \leq \nu \leq k$) Spoiler chooses either a vertex $x_{\nu}$ of $A$ or a vertex $y_{\nu}$ of $B$. Duplicator then chooses a vertex in the other graph.

In the case of monadic second order logic, players can choose subsets as well. Similarly, at the $\nu\mbox{-}$th round ($1 \leq \nu \leq k$) of the game $\EHR^{\MSO}(A,B,k)$~\cite{Logic2,Muller,Zhuk_Logic} Spoiler chooses one of the $A$, $B$. Say, he chooses $A$. Then he either chooses a vertex $x_{\nu}$ or a subset $X_{\nu}$ of $V(A)$. If a vertex is chosen, then Duplicator chooses a vertex of $B$. Otherwise, Duplicator chooses a subset of $V(B)$.

In $\EHR^{\FO}$, at the end of the game the vertices $x_{1},...,x_{k}$ of $A$, $y_{1},...,y_{k}$ of $B$ are chosen. Duplicator wins if and only if the following property holds.

\begin{enumerate}

\item[$\cdot$] For any $i,j\in\{1,\ldots,k\}$, $(x_i\sim x_j)\leftrightarrow (y_i\sim y_j)$, and $(x_i= x_j)\leftrightarrow (y_i=y_j)$.

\end{enumerate}

In $\EHR^{\MSO}$, at the end of the game vertices $x_{i_1},...,x_{i_t}$ of $A$, $y_{i_1},...,y_{i_t}$ of $B$ and subsets $X_{j_1},...,X_{j_{k-t}}$ of $V(A)$, $Y_{j_1},...,Y_{j_{k-t}}$ of $V(B)$ are chosen. Duplicator wins if and only if the following two properties hold.

\begin{enumerate}

\item[$\cdot$] For any $j_1,j_2\in\{i_1,\ldots,i_t\}$, $(x_{j_1}\sim x_{j_2})\leftrightarrow (y_{j_1}\sim y_{j_2})$, and $(x_{j_1}= x_{j_2})\leftrightarrow (y_{j_1}=y_{j_2})$.

\item[$\cdot$] For any $\nu\in\{i_1,\ldots,i_t\}$ and $\mu\in\{j_1,\ldots,j_{k-t}\}$,
$x_{\nu}\in X_{\mu}\leftrightarrow y_{\nu}\in Y_{\mu}$.

\end{enumerate}

The following well-known result establishes the connection between zero-one laws and Ehrenfeucht games (see, e.g.,~\cite{Muller,Janson,Strange,Survey}).
\begin{theorem}
Let $k$ be any positive integer. The random graph $G(n,p)$ obeys the FO zero-one $k$-law if and only if a.a.s. Duplicator has a winning strategy in $\EHR^{\FO}(G(n,p),G(m,p),k)$ as $n,m\to\infty$. The random graph $G(n,p)$ obeys the MSO zero-one $k$-law if and only if a.a.s. Duplicator has a winning strategy in $\EHR^{\MSO}(G(n,p),G(m,p),k)$ as $n,m\to\infty$.
\label{ehren}
\end{theorem}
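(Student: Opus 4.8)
The plan is to reduce the statement to the classical (fixed-structure) Ehrenfeucht--Fra\"{\i}ss\'e theorem, which I take as known: for graphs $A,B$, Duplicator has a winning strategy in $\EHR^{\FO}(A,B,k)$ if and only if $A$ and $B$ satisfy exactly the same FO sentences of quantifier depth at most $k$; and Duplicator wins $\EHR^{\MSO}(A,B,k)$ if and only if $A$ and $B$ agree on all MSO sentences of quantifier depth at most $k$. I will also use two standard ingredients. First, over the relational language of graphs there are, for each $k$, only finitely many FO (resp.\ MSO) sentences of quantifier depth at most $k$ up to logical equivalence (the usual induction on $k$ building Hintikka-type formulae); fix a finite list of representatives $\varphi_1,\dots,\varphi_N$. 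Second, any sentence of quantifier depth $j\le k$ is logically equivalent to one of quantifier depth exactly $k$: conjoin the tautology $\exists x_1\exists x_2\cdots\exists x_k\,(x_1=x_1)$. Hence the zero-one $k$-law --- stated for sentences of depth exactly $k$ --- is equivalent to saying that every sentence of depth at most $k$ is a.a.s.\ true or a.a.s.\ false. Throughout, $G(n,p)$ and $G(m,p)$ are independent, and ``a.a.s.\ Duplicator wins as $n,m\to\infty$'' means that the probability that Duplicator has a winning strategy tends to $1$ as $\min(n,m)\to\infty$.

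For the forward implication (zero-one $k$-law $\Rightarrow$ a.a.s.\ Duplicator wins) I would let $T\subseteq\{\varphi_1,\dots,\varphi_N\}$ be the set of representatives that are a.a.s.\ true, so each $\varphi_i\notin T$ is a.a.s.\ false by the $k$-law. Letting $E_n$ be the event that $G(n,p)$ satisfies $\varphi_i$ exactly for $\varphi_i\in T$, a union bound over the $N$ sentences gives ${\sf P}(E_n)\to1$, and similarly ${\sf P}(E_m')\to1$ for the analogous event $E_m'$ about $G(m,p)$. By independence ${\sf P}(E_n\cap E_m')\to1$, and on $E_n\cap E_m'$ the two graphs agree on every FO sentence of depth at most $k$ (each such sentence being equivalent to some $\varphi_i$), so Duplicator wins by the Ehrenfeucht--Fra\"{\i}ss\'e theorem. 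For the reverse implication I would argue by contradiction: suppose a.a.s.\ Duplicator wins but the $k$-law fails, witnessed by $\varphi$ with $\qd(\varphi)\le k$ for which ${\sf P}(G(n,p)\models\varphi)$ does not tend to $0$ or to $1$. Then (extracting subsequences from the failure of convergence) there are $\delta>0$ and $n_i,m_i\to\infty$ with ${\sf P}(G(n_i,p)\models\varphi)\ge\delta$ and ${\sf P}(G(m_i,p)\not\models\varphi)\ge\delta$; by independence, with probability at least $\delta^2$ the graphs $G(n_i,p)$ and $G(m_i,p)$ disagree on $\varphi$, and then Spoiler wins $\EHR^{\FO}(G(n_i,p),G(m_i,p),k)$ by the Ehrenfeucht--Fra\"{\i}ss\'e theorem --- contradicting that the winning probability of Duplicator tends to $1$. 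The MSO statement is proved identically, replacing FO by MSO throughout and invoking the MSO versions of the two ingredients.

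The substantive inputs --- the fixed-structure Ehrenfeucht--Fra\"{\i}ss\'e correspondence and the finiteness of the $\equiv_k$-classes --- are classical, so I expect no real obstacle here. The only points that need a little care are: pinning down the precise meaning of ``a.a.s.\ Duplicator wins as $n,m\to\infty$''; the padding argument that lets one pass freely between ``quantifier depth exactly $k$'' and ``quantifier depth at most $k$''; and, in the reverse direction, the (routine) extraction of the sequences $n_i,m_i$ and the constant $\delta$ witnessing that ${\sf P}(G(n,p)\models\varphi)$ fails to converge to $0$ or $1$.
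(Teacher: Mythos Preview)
The paper does not give its own proof of this theorem: it is stated as a well-known result with references to the literature, and is used as a black box throughout. Your proposal is a correct outline of the standard argument one finds in those references --- reduce to the fixed-structure Ehrenfeucht--Fra\"{\i}ss\'e theorem and the finiteness of $\equiv_k$-classes (Hintikka formulae), then a union bound over representatives for the forward direction and a subsequence/independence argument for the reverse --- so there is nothing in the paper to compare it against. One minor remark on the reverse direction: your parenthetical ``extracting subsequences from the failure of convergence'' slightly understates the case, since the failure of the $k$-law also covers the possibility that ${\sf P}(G(n,p)\models\varphi)$ \emph{does} converge, but to some $c\in(0,1)$; your argument still goes through verbatim (take $\delta=\min(c,1-c)/2$ and $n_i=m_i=i$), but it is worth saying explicitly.
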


\section{When the zero-one laws fail}
\label{proof_simple}
This section contains the proofs of the parts of the results which state that the zero-one law does not hold for some value of $\alpha$.
\subsection{Proof of Theorem~\ref{inf_spectrum}}
\label{proof1}

A formula $\varphi$ with $\qd(\varphi)=4$ and infinite spectrum is given below:
\begin{align*}
  \varphi=\exists X\quad \biggl(& \biggl[\exists x_1\exists x_2\forall x & \biggl(X(x_1)\wedge \neg X(x_2)\wedge[(X(x)\wedge x\neq x_1)\leftrightarrow(x\sim x_1\wedge x\sim x_2)]\biggr)\biggr]&\wedge\\
  &\,\,\biggl[\neg\biggl(\exists z\forall x & [X(x)\rightarrow (\exists v \,\, [v\sim z\wedge v\sim x])]\biggr)\biggr]&\wedge\\
  &\,\,\biggl[\exists x_1\exists x_2\exists x_3 & (X(x_1)\wedge X(x_2)\wedge X(x_3)\wedge x_1\sim x_2\wedge x_2\sim x_3\wedge x_1\sim x_3)\biggr]&\biggr).
\end{align*}
Let $m\in\mathbb{N}$, $\alpha=\frac{1}{2}+\frac{1}{2(m+1)}$ and $p=n^{-\alpha}$. Consider the formula
\begin{align*}
  \varphi^{\mathrm{FO}}=\exists x_1\exists x_2\quad\biggl(& x_1\sim x_2\wedge &\\
  &\biggl[\neg\biggl(\exists z\forall x & \biggl[([x\sim x_1\wedge x\sim x_2]\vee x=x_1)\rightarrow (\exists v \,\, [v\sim z\wedge v\sim x])\biggr]\biggr)\biggr]&\wedge\\
  &\biggl[\exists y_1\exists y_2 & (x_1\sim y_1\wedge x_1\sim y_2\wedge x_2\sim y_1\wedge x_2\sim y_2\wedge y_1\sim y_2)\biggr]&\biggr).
\end{align*}

\begin{lemma}
There exists $c\in(0,1)$ such that
$$
 \lim\limits_{n\to\infty}{\sf P}(G(n,p)\models\varphi^{\mathrm{FO}})=c.
$$
\label{from_4_to_5}
\end{lemma}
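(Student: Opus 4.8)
The plan is to reduce the event $\{G(n,p)\models\varphi^{\mathrm{FO}}\}$, up to an event of probability $o(1)$, to the event that $G(n,p)$ contains a copy of one fixed graph $F=F_m$ lying exactly at its threshold, and then to apply Theorem~\ref{threshold}. First I would unwind $\varphi^{\mathrm{FO}}$: a graph $G$ satisfies it precisely when there are vertices $x_1,x_2$ realizing the adjacency pattern of the first and last conjuncts, while the vertex set $S=\{x_1\}\cup(N(x_1)\cap N(x_2))$ has no vertex $z$ of the kind the middle conjunct forbids. Let $F=F_m$ be the graph obtained from the ``positive'' pattern of $\varphi^{\mathrm{FO}}$ by adjoining enough common neighbours of $x_1$ and $x_2$ so that the forbidding clause is a.a.s.\ satisfied by every copy of $F$. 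I would then check, by exhibiting the densest subgraph of $F$ and verifying that every proper subgraph is strictly sparser, that $\rho^{\max}(F)=1/\alpha=\tfrac{2(m+1)}{m+2}$; since then $p=n^{-\alpha}=n^{-1/\rho^{\max}(F)}$ is exactly the threshold for $F$, Theorem~\ref{threshold} gives ${\sf P}(F\subseteq G(n,p))\to\xi(F)$ with $0<\xi(F)<1$. It therefore remains to prove that a.a.s.\ $G(n,p)\models\varphi^{\mathrm{FO}}$ if and only if $F\subseteq G(n,p)$; the lemma then holds with $c=\xi(F)$.

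For the easy direction, $F\subseteq G(n,p)\Rightarrow G(n,p)\models\varphi^{\mathrm{FO}}$: the vertices of any copy of $F$ already satisfy the adjacency conjuncts, so the only possible failure is the presence of a forbidden $z$. I would first use Lemma~\ref{maximal_extension}, together with Lemma~\ref{no_neighb_ext} (available since $\alpha>1/2$ throughout), to pass to a \emph{clean} copy of $F$: one whose relevant vertices have no common neighbour outside the copy. Every graph obtained from $F$ by adjoining a forbidden $z$ and its auxiliary neighbours has $\rho^{\max}>1/\alpha$, hence is a.a.s.\ absent, so a.a.s.\ every copy of $F$ is clean in the needed sense; and for a clean copy a finite check depending only on $F$ shows that no vertex $z$ of $G(n,p)$ can be of the forbidden kind, so $\varphi^{\mathrm{FO}}$ holds.

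For the harder direction, $G(n,p)\models\varphi^{\mathrm{FO}}\Rightarrow F\subseteq G(n,p)$ (a.a.s.): suppose $\varphi^{\mathrm{FO}}$ is witnessed by $x_1,x_2$ with the resulting $S$, and let $H$ be the subgraph induced by $x_1,x_2$, the witnesses $y_1,y_2$, and the common neighbours of $x_1$ and $x_2$, which a.a.s.\ form a bounded set by Theorem~\ref{threshold}. If $\rho^{\max}(H)>1/\alpha$ then $H$ is a.a.s.\ absent, a contradiction, so $\rho^{\max}(H)\le 1/\alpha$. If $H$ were strictly sparser than $F$, then a.a.s.\ --- by the extension property of Lemma~\ref{extension}, applicable because $1/\alpha>1$ lets us take the parameter $k\le 1$ --- there would be a vertex $z$ of precisely the forbidden kind, contradicting $G(n,p)\models\varphi^{\mathrm{FO}}$. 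The finitely many shapes of $H$ of density exactly $1/\alpha$ other than $F$ are ruled out by the same kind of finite argument. Hence a.a.s.\ $G(n,p)$ contains a copy of $F$, and the claimed equivalence follows.

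I expect the main obstacle to be this last direction --- converting ``the witness configuration is too sparse'' into ``a.a.s.\ there is a forbidden $z$''. This requires the extension lemmas, Lemmas~\ref{extension} and~\ref{no_neighb_ext}, applied uniformly over the constantly many possible shapes of the witness, together with a finite but delicate case analysis of exactly which subgraphs of density $1/\alpha$ can satisfy the middle conjunct of $\varphi^{\mathrm{FO}}$ --- precisely the point where the special form of that conjunct matters.
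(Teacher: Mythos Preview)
The paper does not give its own proof of this lemma; it simply refers to \cite{Zhuk_inf} (the proof of Theorem~1 there). So the only thing to evaluate is your sketch on its own terms.

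Your central reduction --- that a.a.s.\ $G(n,p)\models\varphi^{\mathrm{FO}}$ if and only if $G(n,p)\supseteq F$ for a single fixed graph $F=F_m$ with $\rho^{\max}(F)=1/\alpha$ --- cannot work, and the mistake is exactly in the step you flag as ``easy''. Unwinding your conditions forces $F$ to be the ``book with an extra edge'': $x_1\sim x_2$ with $m$ common neighbours $c_1,\dots,c_m$ and the edge $c_1c_2$, giving $m+2$ vertices, $2m+2$ edges, and density $\tfrac{2m+2}{m+2}=1/\alpha$. You then assert that every extension of $F$ by a forbidden $z$ together with its auxiliary common neighbours has $\rho^{\max}>1/\alpha$. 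This is false. Take new vertices $z,v_0,v_1,\dots,v_m$ and add the $2(m+1)$ edges $zv_i$ and $v_i s_i$, where $\{s_0,\dots,s_m\}=\{x_1,c_1,\dots,c_m\}=S$. The resulting graph $F'$ has $2m+4$ vertices and $4m+4$ edges, so $\rho(F')=\tfrac{2m+2}{m+2}=1/\alpha$, and a short check shows $\rho^{\max}(F')=1/\alpha$ as well. Hence by Theorem~\ref{threshold} the bad extension $F'$ is itself at threshold and occurs with asymptotic probability strictly between $0$ and $1$: a non-vanishing fraction of the copies of $F$ come equipped with a forbidden $z$, and the implication ``$F\subseteq G(n,p)\Rightarrow G(n,p)\models\varphi^{\mathrm{FO}}$'' is not a.a.s.\ valid. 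Nor can you sidestep this by adding one more common neighbour to $F$ to push the bad extension above threshold: that makes $\rho(F)=\tfrac{2m+4}{m+3}>1/\alpha$, so $F$ itself a.a.s.\ fails to appear. The whole point of the construction in \cite{Zhuk_inf} is that $\alpha$ is tuned so that the witnessing configuration \emph{and} its forbidden extensions sit simultaneously at threshold; the limit $c$ is a genuine statement about the joint (Poisson-type) limiting distribution of several coupled subgraph counts, not $\xi(F)$ for a single $F$, and it cannot be obtained by the reduction you propose.
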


A proof of Lemma~\ref{from_4_to_5} is given in \cite{Zhuk_inf}  (see the proof of Theorem 1). Obviously, the sets of graphs $\{G\models\varphi\}$, $\{G\models\varphi^{\mathrm{FO}}\}$, for which formulae $\varphi$ and $\varphi^{\mathrm{FO}}$ are true, are the same. Therefore, Lemma~\ref{from_4_to_5} holds for $\varphi$ as well. This implies Theorem~\ref{inf_spectrum}.

\subsection{The FO zero-one 3-law}
\label{proof_simple_FO}

In this section we prove sufficiency in Theorem~\ref{FO-new}, by showing that $G(n,n^{-\alpha})$ does not obey the FO zero-one 3-law if $\alpha\in\{1,7/6,6/5,5/4,4/3,3/2,2\}$. If $\alpha=1$, then $G(n,n^{-\alpha})$ does not obey the FO zero-one 3-law, because $G(n,n^{-\alpha})$ does not contain a triangle with asymptotic probability $c_1\in(0,1)$. Let us prove that $G(n,n^{-\alpha})$ does not obey the FO zero-one 3-law if $\alpha\in\{7/6,6/5,5/4,4/3,3/2,2\}$. Below, for each $\alpha$ from this set, we give a FO formula with quantifier depth at most $3$ each of which has asymptotic probability in $(0,1)$, as can be proved from  $\mathbf{T1}$ -- $\mathbf{T4}(\ell)$. More precisely, for forests with components of size at most $\ell+1$, the sentence enumerated $\ell$) says that a graph contains a given tree of size exactly $\ell+1$. From T1,T2$(\ell)$,T4$(\ell)$, it follows that $G(n,n^{-1-1/\ell})$ contains such a tree with asymptotical probability in $(0,1)$.

We define
$$
P_2(x,y)=([\exists z\quad (x\sim z\wedge y\sim z)]\wedge x\neq y),\quad S(x)=(\exists y \exists z\quad [x \sim y \wedge x \sim z \wedge y \ne z]).
$$

\begin{enumerate}

\item[1)] $\alpha=2$: $\exists x_1\exists x_2\quad (x_1\sim x_2)$.

\item[2)] $\alpha = \frac{3}{2}$:  $\exists x_1\exists x_2\quad P_2(x_1,x_2)$.

\item[3)] $\alpha = \frac{4}{3}$:  $\exists x_1\exists x_2\quad (P_2(x_1,x_2)\wedge S(x_1)).$

\item[4)] $\alpha = \frac{5}{4}$:
$\exists x_1\,\varphi(x_1)$, where
$$
\varphi(x_1)=(S(x_1)\wedge[\forall x_2 \exists x_3\quad x_2 \sim x_1 \rightarrow (x_2 \sim x_3 \wedge x_3 \ne x_1)]).
$$

\item[5)] $\alpha = \frac{6}{5}$:
$\exists x_1\quad \biggl(\varphi(x_1) \wedge\biggl[\exists x_2\quad (P_2(x_1,x_2) \wedge [\exists x_3\quad (x_3 \not\sim x_1 \wedge x_3 \sim x_2)])\biggr]\biggr)$.

\item[6)] $\alpha = \frac{7}{6}$: $\exists x_1\quad \biggl(\varphi(x_1) \wedge \biggl[\forall x_2\quad (P_2(x_1,x_2) \rightarrow [\exists x_3\quad (x_3 \nsim x_1 \wedge x_3 \sim x_2)])\biggr]\biggr)$.

\end{enumerate}
In $G(n,n^{-\alpha})$ with $\alpha>1$, each of these formulas are fulfilled when the graph contains a copy of the tree depicted on Figure~\ref{fig0}. Moreover, each of these trees are minimal among all trees that satisfy the corresponding property.
\begin{figure}
\begin{center}
\begin{tikzpicture}[scale=0.7]


\draw[thick] (1,-2) -- (1,-3.5);

\draw[thick] (2,-3.5) -- (2.5,-2) --(3,-3.5);
\draw[thick] (3.5,-2)--(4,-3.5) -- (5,-3.5) -- (5.5,-2);
 (2,-2);
 \node[fill=black, circle, inner sep = -0pt, minimum size=4pt] at (1,-2) {.};
 \node[fill=black, circle, inner sep = -0pt, minimum size=4pt] at (1,-3.5) {.};
 \node[fill=black, circle, inner sep = -0pt, minimum size=4pt] at (2,-3.5) {.};
 \node[fill=black, circle, inner sep = -0pt, minimum size=4pt] at (3,-3.5) {.};
 \node[fill=black, circle, inner sep = -0pt, minimum size=4pt] at (2.5,-2) {.};

 \node[fill=black, circle, inner sep = -0pt, minimum size=4pt] at (3.5,-2) {.};
 \node[fill=black, circle, inner sep = -0pt, minimum size=4pt] at (4,-3.5) {.};
 \node[fill=black, circle, inner sep = -0pt, minimum size=4pt] at (5,-3.5) {.};
 \node[fill=black, circle, inner sep = -0pt, minimum size=4pt] at (5.5,-2) {.};
\end{tikzpicture}
\ \ \ \begin{tikzpicture}[scale=0.7]


\draw[thick] (2,-1) -- (1,-0.6)-- (0,0) -- (1,0.6) --(2,1);
 \node[fill=black, circle, inner sep = -0pt, minimum size=4pt] at (2,-1) {.};
 \node[fill=black, circle, inner sep = -0pt, minimum size=4pt] at (1,-0.6) {.};
 \node[fill=black, circle, inner sep = -0pt, minimum size=4pt] at (1,0.6) {.};
 \node[fill=blue, circle, inner sep = -0pt, minimum size=5.5pt] at (0,0) {.};
 \node[fill=black, circle, inner sep = -0pt, minimum size=4pt] at (2,1) {.};
\end{tikzpicture}
\ \ \ \begin{tikzpicture}[scale=0.7]


\draw[thick] (2,-1) -- (1,-0.6)-- (0,0) -- (1,0.6) --(2,1)--(3,0.5);
 \node[fill=black, circle, inner sep = -0pt, minimum size=4pt] at (2,-1) {.};
 \node[fill=black, circle, inner sep = -0pt, minimum size=4pt] at (1,-0.6) {.};
 \node[fill=black, circle, inner sep = -0pt, minimum size=4pt] at (1,0.6) {.};
 \node[fill=blue, circle, inner sep = -0pt, minimum size=5.5pt] at (0,0) {.};
 \node[fill=black, circle, inner sep = -0pt, minimum size=4pt] at (2,1) {.};
 \node[fill=black, circle, inner sep = -0pt, minimum size=4pt] at (3,0.5) {.};
\end{tikzpicture}
\ \ \ \begin{tikzpicture}[scale=0.7]


\draw[thick] (3,-0.5)--(2,-1) -- (1,-0.6)-- (0,0) -- (1,0.6) --(2,1)--(3,0.5);
 \node[fill=black, circle, inner sep = -0pt, minimum size=4pt] at (2,-1) {.};
 \node[fill=black, circle, inner sep = -0pt, minimum size=4pt] at (1,-0.6) {.};
 \node[fill=black, circle, inner sep = -0pt, minimum size=4pt] at (1,0.6) {.};
 \node[fill=blue, circle, inner sep = -0pt, minimum size=5.5pt] at (0,0) {.};
 \node[fill=black, circle, inner sep = -0pt, minimum size=4pt] at (2,1) {.};
 \node[fill=black, circle, inner sep = -0pt, minimum size=4pt] at (3,0.5) {.};
 \node[fill=black, circle, inner sep = -0pt, minimum size=4pt] at (3,-0.5) {.};
\end{tikzpicture}

\end{center}

\caption{Trees corresponding to the formulas 1)-- 6). The big blue circles denote  $x_1$ in 4)--6).}\label{fig0}
\end{figure}
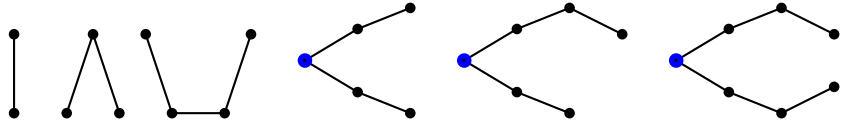

\subsection{The MSO zero-one 3-law}
\label{proof_simple_MSO}

As we have seen in the previous section, if $\alpha\in\{7/6,6/5,5/4,4/3,3/2,2\}$, then $G(n,n^{-\alpha})$ does not obey the FO zero-one 3-law.\\

Let $\alpha=9/8$ and $p=n^{-\alpha}$. Consider the formula
\begin{align*}
  \varphi=\exists X& \biggl(\biggl[\exists x\quad \biggl(X(x)\wedge[\forall y\quad ([X(y)\wedge x\neq y]\rightarrow x\sim y)]\biggr)\biggr]&\wedge\\
  &\,\,\,\,\biggl[\forall x\quad \biggl(X(x)\rightarrow[\exists y\quad (\neg X(y)\wedge x\sim y )]\biggr)\biggr]&\wedge\\
  &\,\,\,\,\biggr[\forall y\quad \biggl([\neg X(y)\wedge(\exists x \quad [X(x)\wedge x\sim y])]\rightarrow
  [\exists z\quad (\neg X(z)\wedge y\sim z)]\biggr)\biggr]&\biggr).
\end{align*}
The formula $\varphi$ says that there exists a set $X$ with a {\it universal vertex} (universal vertex is adjacent to all the other vertices in this set) such that every $x\in X$ has a neighbor outside $X$ and every vertex in the {\it neighborhood} of $X$ (the set of all neighbors of vertices in $X$) has a neighbor outside $X$. Obviously, if a graph $G$ is a forest and $\varphi$ is true for $G$, then $G$ contains a component $T$ isomorphic to the graph depicted on Figure~\ref{figt}.
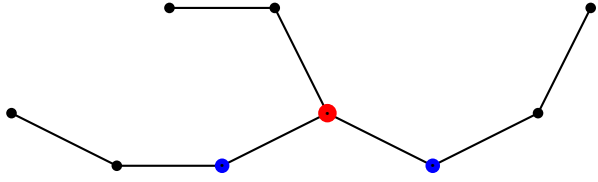
\begin{figure}
\begin{center}
\begin{tikzpicture}[scale=0.7]


\draw[thick] (-4,-2) -- (-2,-3)-- (0,-3) -- (2,-2) --(1,0)--(-1,0);
\draw[thick] (2,-2)--(4,-3) -- (6,-2) -- (7,0);
 (2,-2);
 \node[fill=black, circle, inner sep = -0pt, minimum size=4pt] at (2,-2) {.};
 \node[fill=black, circle, inner sep = -0pt, minimum size=4pt] at (-4,-2) {.};
 \node[fill=black, circle, inner sep = -0pt, minimum size=4pt] at (-2,-3) {.};
 \node[fill=blue, circle, inner sep = -0pt, minimum size=5.5pt] at (0,-3) {.};
 \node[fill=red, circle, inner sep = -0pt, minimum size=7pt] at (2,-2) {.};
 \node[fill=black, circle, inner sep = -0pt, minimum size=4pt] at (1,-0) {.};
 \node[fill=black, circle, inner sep = -0pt, minimum size=4pt] at (-1,0) {.};
 \node[fill=blue, circle, inner sep = -0pt, minimum size=5.5pt] at (4,-3) {.};
 \node[fill=black, circle, inner sep = -0pt, minimum size=4pt] at (6,-2) {.};
 \node[fill=black, circle, inner sep = -0pt, minimum size=4pt] at (7,0) {.};
\end{tikzpicture}
\end{center}

\caption{Tree $T$ corresponding to the formula $\varphi$. Big red circle corresponds to the universal vertex from $X$, medium sized blue circles correspond to two other vertices in $X$.}\label{figt}
\end{figure}

Moreover, if there is a component isomorphic to $T$ in $G$, then $G\models\varphi$. As $G(n,p)$ has the properties \textbf{T1}, $\mathbf{T2(8)}$, $\mathbf{T3(7)}$ and $\mathbf{T4(8)}$ (see Section~\ref{pre_graphs}), a.a.s. $G(n,p)\models\varphi$ if and only if in $G(n,p)$ there is a component isomorphic to $T$. By the property $\mathbf{T4(8)}$, ${\sf P}(G(n,p)\models\varphi)\to c_2\in (0,1)$ as $n\to\infty$. Therefore, $G(n,p)$ does not obey the MSO zero-one 3-law.\\

Let $\alpha=5/6$ and $p=n^{-\alpha}$. Consider the formula
\begin{align*}
  \varphi=\exists X& \biggl(\biggl[\forall x\quad \biggl(X(x)\rightarrow[(\exists y\quad [X(y)\wedge x\sim y])\wedge(\exists y\quad [X(y)\wedge x\neq y \wedge x\nsim y])]\biggr)\biggr]&\wedge\\
  &\,\,\,\,\biggl[\exists z\quad \biggl(\neg X(z)\wedge [\forall x\quad (X(x)\rightarrow z\sim x)]\biggr)\biggr]&\biggr).
\end{align*}
Let $H$ be a graph on $5$ vertices and $6$ edges which is the union of two triangles sharing one vertex. Then $G\models\varphi$ if and only if $G\supseteq H$ (not necessarily induced). By Theorem~\ref{threshold}, $G(n,p)\supset H$ with asymptotic probability $c_3\in (0,1)$. Therefore, ${\sf P}(G(n,p)\models\varphi)\to c_3$ as $n\to\infty$ and so $G(n,p)$ does not obey the MSO zero-one 3-law.\\

Finally, let $\alpha=4/5$ and $p=n^{-\alpha}$. Consider the formula
\begin{align*}
  \varphi=\exists X&\biggl(\biggl[\exists x\quad (X(x)\wedge[\forall y\quad ([X(y)\wedge x\neq y]\rightarrow x\sim y)])\biggr]&\wedge\\
  &\,\,\,\,\biggl[\exists x\exists y\quad (X(x)\wedge X(y)\wedge  x\neq y \wedge x\nsim y)\biggr]&\wedge\\
  &\,\,\,\,\biggl[\exists z\quad (\neg X(z)\wedge [\forall x\quad (X(x)\rightarrow z\sim x)])\biggr]&\biggr).
\end{align*}
Let $H$ be a {\it diamond graph} (two triangles sharing an edge). Obviously, $G\models\varphi$ if and only if $G\supset H$. By Theorem~\ref{threshold}, ${\sf P}(G(n,p)\models\varphi)\to c_4\in(0,1)$ as $n\to\infty$ and so $G(n,p)$ does not obey the MSO zero-one 3-law.\\

\section{When the zero-one laws hold}
\label{proof2}

In this section, we describe the Duplicator's winning strategy for the remaining values of $\alpha$ from Theorems~\ref{3-law} and \ref{FO-new}. The strategy for proving Theorem~\ref{3-law} is based on a classification of subset-pairs and subset-vertex-pairs, which is carried out in Sections~\ref{alpha<1} and~\ref{alpha>1}.

Let $G$ be a graph, and $X$ be a subset of $V(G)$. We denote by $\bar{X}$ and $\bar{G}$ the set $V(G)\setminus X$ and the complement of $G$ respectively. For a given vertex $v\in V(G)$, its set of neighbors is denoted by $N(v)$. We denote by $\deg_X(v)$ the number of vertices from $X$ that are adjacent to $v$. If $\deg(v)=0$, then we call $v$ \textit{isolated}.

\subsection{Proof of Theorem~\ref{FO-new}}\label{sec411}

Let us recall that $G(n,n^{-\alpha})$ obeys the FO zero-one 3-law if $\alpha<1$~\cite{Zhuk_law}. So, due to Theorem~\ref{FO01}, for this section, we assume that $\alpha= 1+\frac 1{\ell}$, where $\ell\ge 7$; to prove necessity of the condition in Theorem 5 we have to now prove that $G(n,n^{-\alpha})$ obeys the zero-one 3-law.
Let $T$ be a component in $G$ containing $x_1$, the choice of Spoiler in the first round. If in $H$ there is a component $F$ isomorphic to $T$, then Duplicator chooses $y_1=\varphi(x_1)$, where $\varphi:T\rightarrow F$ is an isomorphism. In what follows, we discuss the choice of Duplicator in case there is no such component. Note that by T3(6) a.a.s. for any tree on at most $7$ vertices there is a component in $H$ isomorphic to it.

The {\it distance} between two vertices $u$ and $v$ in a connected graph is the minimum edge length of a path connecting $u$ and $v$. It is denoted by $d(u,v)$.  If there is no path between $u$ and $v$, we write $d(u,v)=\infty$ (for any real $d$, we let $d<\infty$). For a tree $R$ and a vertex $v\in R$ define the \textit{leaf distance set} $L(v)\subset \mathbb N\cup\{0\}$ as the set of all distances from $v$ to the leaves of $R$. Note that $0\in L(v)$ iff $v$ is a leaf itself. We call a subset of $\mathbb N\cup\{0\}$ \textit{admissible}, if it is a leaf distance set for some tree and its vertex. It is easy to see that a set is not admissible iff it contains both $0$ and $1$, and has cardinality at least 3.

It is not difficult to check that for each admissible $L$, there is a tree $R(L)$ on at most $7$ vertices and a vertex $v\in R(L)$ in that tree such that, first, $L\cap \{0,1,2\} = L(v)\cap\{0,1,2\}$, and, second, $3\in L(v)$ iff $L\setminus \{0,1,2\}$ is nonempty. Indeed, in the worst case, $L\cap\{0,1,2\}=\{1,2\}$ and $L\setminus\{0,1,2\}\neq\varnothing$. But this case requires exactly 7 vertices (for such $L$, $R(L)$ is a union of $P_2,P_3,P_4$, where $P_s$ is a simple path on $s$ vertices, sharing a common first vertex $v$). Duplicator finds a (tree) component $K$ in $H$ isomorphic to $R(L(x_1))$ and the vertex $y_1\in K$ that corresponds to $x_1$.

If in the second round Spoiler chooses a vertex (say, $x_2\in V(G)$), then Duplicator chooses a vertex $y_2\in V(H)$ such that

\begin{itemize}
\item If $d(x_1,x_2)\le 2$, then $d(y_1,y_2)=d(x_1,x_2)$ and $y_2$ is a leaf if and only if $x_2$ is a leaf;
\item if $d(x_1,x_2)>2$ then $d(y_1,y_2)=\infty$;
\item $y_2$ is isolated if and only if $x_2$ is isolated.
\end{itemize}

By the choice of $K$ and $y_1$, it is clear that such $y_2$ exists.
It is not difficult to see that in the third round Duplicator has a winning strategy as well. Together with the proof of sufficiency given in Section~\ref{proof_simple_FO}, Theorem~\ref{FO-new} is proved.

\subsection{Classifications of sets and strategies for Duplicator}\label{sec41}

In this section, we prove Theorem~\ref{3-law}.

We first treat the degenerate set-choices $|X|\in\{0,1,n-1,n\}$. Interchanging $X,Y$ and $\bar X,\bar Y$, we may w.l.o.g. assume that  $|X|\in \{0,1\}$. If $X=\emptyset$, then Duplicator chooses $Y=\emptyset$, and in the following round plays as if the previous round was not played. If $X=\{v\}$, then Duplicator chooses $Y=\{w\}$, where $w$ is taken according to the strategy of Duplicator in case Spoiler chose a vertex $v$ (and not a set $\{v\}$). In the next round Duplicator plays as if in the previous round the players chose vertices $v,w$. Clearly, if Duplicator has a winning strategy for the case when Spoiler chooses either nontrivial sets or vertices, then Duplicator has a winning strategy for the degenerate choices. In what follows we therefore assume that Spoiler chooses only {\it nondegenerate sets} $X$, that is, such that both $|X|\geq 2$ and $|\bar X|\geq 2$.

Note also that in case $\alpha>1$, in view of Theorem~\ref{MSO01}, we may assume that $\alpha = 1+\frac 1{\ell}$ for $\ell\in \mathbb N$, which we do tacitly for the rest of the section.

\subsubsection{Pairs of complementary subsets of $V_n$}
\label{classification}

In the next paragraph, we classify the vertices of $V_n$ into six types with respect to a subset $X$ and give  names to the classes.
The intuition behind this classification is the following. Assume, that you consider FO sentences having quantifier depth $2$ (as if you skip one monadic quantifier). Then, these sentences divide the set of all graphs into the following classes of elementary equivalence (see, e.g.,~\cite{Survey}, Page 39): cliques, empty graphs, graphs with isolated vertices having at least one edge, graphs with universal vertices having at least one non-edge, and all the other (common) graphs. Recovering one monadic variable $X$, we consider now the existential fragment of MSO with only one monadic quantifier. Distinguishing equivalence classes w.r.t. these sentences requires, in particular, considering the above classes of graphs induced by $X$. This leads to the following crucial definitions.

We say that $v\in X$ is {\it $X$-inside-dominating (respectively $X$-outside-dominating)}  if $v$ is adjacent to all the vertices in $X\setminus\{v\}$ (respectively $\bar X$). We say that $v\in X$ is {\it $X$-inside-isolated (respectively $X$-outside-isolated)}  if $v$ is nonadjacent to all the vertices in $X$ ($\bar X$). Otherwise we say that $v\in X$ is {\it $X$-inside-common (respectively $X$-outside-common)}.

In some cases it is more convenient for us not to specify whether $v\in X$ or $v\in \bar X$. If $v\in V$ is adjacent to all vertices of $X$, except maybe itself, then we say that $v$ is {\it $X$-dominating}. If $v\in V$ is nonadjacent to all vertices from $X$, then we say that $v$ is {\it $X$-isolated}. Otherwise, we say that $v$ is \textit{$X$-common}.

Based on this classification of vertices, we define the \textit{type} of a vertex $v\in X$. The types are all possible pairs of properties, where the first property in the pair is one of this: ``inside-dominating'', ``inside-common'', ``inside-isolated'', and the second one is one of the analogous outside-properties. The {\it type} of $v\in X$ is the pair of properties that $x$ satisfy w.r.t $X$.
We stress that the type is defined with respect to a subset. It will be clear from the context w.r.t. which subset the type is defined.\\




%

The crucial part of the proof of Theorem \ref{3-law} is the classification of pairs $X,\bar X$ based on the types their vertices have. The \textit{type of a pair} $X, \bar X$ is specified by the following two parameters: \vskip+0.2cm\noindent
1. The list of \textit{all} the types that the vertices of $X$ have.\\
2. The list of \textit{all} the types that the vertices of $\bar X$ have.\\

We illustrate the importance of this classification in the next section.

\subsubsection{Duplicator's strategy if Spoiler chooses a set in the first round}\label{sec413}

In this section and Section~\ref{sec414} we finish the proof of Theorem~\ref{3-law} modulo some classification results, proved in Section~\ref{alpha<1} and~\ref{alpha>1}.
Let $G,H$ be two graphs.

\begin{lemma}
Suppose that in the first round of $\EHR^{\MSO}(G,H,3)$ two (nontrivial) subsets $X\subset V(G)$, $Y\subset V(H)$ are chosen. If
the types of the pairs $X,\bar X$ and $Y, \bar Y$ are the same,
then in the last two rounds Duplicator has a winning strategy.
\label{main_strategy}
\end{lemma}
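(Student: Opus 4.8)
\textbf{Proof plan for Lemma~\ref{main_strategy}.}

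The plan is to show that after the first round is played (with $X,\bar X$ and $Y,\bar Y$ of the same pair-type), Duplicator can survive the remaining two rounds, which is a game of FO-type flavor with $2$ moves but on structures equipped with the distinguished subsets. The key observation is that in rounds $2$ and $3$ Spoiler could in principle still pick subsets, but two rounds of quantifiers are not enough to do anything subtle: if Spoiler picks a subset $Z$ in round $2$, only one more quantifier remains, so by the reduction already used in Section~\ref{sec41} for degenerate set-choices (and Theorem~\ref{ehren}) it suffices to handle the case where rounds $2$ and $3$ are vertex-moves; a $1$-round game on a structure with two unary predicates is won by Duplicator as soon as the two structures realize the same atomic $1$-types, which is exactly what the pair-type records at the level of single vertices. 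So the real content is: \emph{after Spoiler picks a vertex in round $2$, Duplicator can answer so that the pair consisting of (the atomic type of the answer w.r.t.\ the set) together with (its adjacency/distance relation to the round-$1$ set, suitably coarsened to what one more quantifier can detect) is matched.}

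First I would set up notation: w.l.o.g.\ Spoiler plays in $G$ in round $2$, choosing a vertex $x_2$. Duplicator must pick $y_2\in V(H)$ so that (i) $x_2\in X \Leftrightarrow y_2\in Y$; (ii) $x_2$ has the same type (in the six-type classification of Section~\ref{classification}, or rather the $X$-dominating / $X$-isolated / $X$-common trichotomy, since that is all one further quantifier can probe) as $y_2$; and (iii) the ``link'' information that round $3$ can exploit is preserved — concretely, for the last round Spoiler will pick a third vertex $x_3$ and the relevant facts are the adjacency $x_2\sim x_3$, membership $x_3\in X$, and whether $x_3$ is a common neighbour of $x_2$ (and, if $x_3\notin X$, whether $x_3$ is $X$-dominating, etc.). The point is that all of this is captured by the type of the pair $X,\bar X$: that type lists \emph{which} vertex-types occur in $X$ and in $\bar X$, so if $x_2$ has a certain type in $G$, the equality of pair-types guarantees $H$ contains a vertex $y_2\in Y$ (or $\bar Y$, matching membership) of exactly that type, and moreover — and this is the step needing care — it guarantees that the local neighbourhood structure around $y_2$ within $Y,\bar Y$ is rich enough to answer whatever $x_3$ Spoiler produces in round $3$. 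I would make this precise by checking, for each of the (finitely many) vertex-types $t$, that a vertex of type $t$ in a set $X$ automatically has neighbours/non-neighbours of every type that is ``present in the pair'' — i.e.\ the definitions of dominating/isolated/common force the existence of the needed witnesses — so that the round-$3$ response is produced by the same one-quantifier reduction.

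The main obstacle I expect is a bookkeeping one rather than a conceptual one: one must verify that the chosen invariant (equality of pair-types) is genuinely a \emph{winning} invariant for a $2$-round continuation, i.e.\ that it is preserved under Spoiler's round-$2$ move and that the residual $1$-round position is a Duplicator win. The subtlety is that round $2$ could introduce a vertex $x_2$ whose type w.r.t.\ $X$ is some type $t$, and Duplicator's answer $y_2$ of type $t$ in $H$ must then, in round $3$, also be able to mimic the relation between $x_2$ and $x_3$; this requires that ``type $t$ occurs in the pair'' carries with it enough adjacency witnesses. I would handle this by a short case analysis over the vertex types, using precisely the quantifier-depth-$2$ expressivity discussion in Section~\ref{classification} (cliques / empty / isolated-vertex / universal-vertex / generic induced subgraphs) to argue that nothing finer than the pair-type can be distinguished in two more rounds. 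An alternative, cleaner route — which I would mention as a fallback — is to invoke Theorem~\ref{ehren} in the form: Duplicator wins the $2$-round residual game iff the two labelled structures $(G,X,x_1)$ and $(H,Y,y_1)$ agree on all quantifier-depth-$2$ sentences with one binary and two unary predicates, and then note that the finitely many such sentences are, up to equivalence, exactly the conditions recorded by the pair-type together with the type of $x_1$; since $x_1$ was matched in round $1$ and the pair-types agree by hypothesis, Duplicator wins.
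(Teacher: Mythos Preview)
Your plan for the vertex case (match the type of $x_2$ with some $y_2$, then observe that the inside/outside dominating--common--isolated type of a vertex records precisely whether it has neighbours and non-neighbours in $X$ and in $\bar X$, which is all a single further quantifier can test) is correct and is exactly what the paper does. The paper then simply says that round~3 is ``obvious''; your more detailed discussion of adjacency witnesses is fine but unnecessary, and the fallback via Theorem~\ref{ehren} is over-engineering.

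Your treatment of the set case in round~2, however, is confused. The reference to ``the reduction already used in Section~\ref{sec41} for degenerate set-choices'' is wrong: that reduction only applies to sets of size $0,1,|V|-1,|V|$ and does \emph{not} let you replace an arbitrary round-2 set move by a vertex move. Your subsequent remark about atomic $1$-types with two unary predicates is the right idea, but it is not ``exactly what the pair-type records''; the pair-type records adjacency-based vertex types, not the four membership cells $X\cap X'$, $X\cap\bar{X'}$, $\bar X\cap X'$, $\bar X\cap\bar{X'}$. The paper handles this case directly and in one line: Duplicator picks $Y'$ so that each of the four cells $Y\cap Y'$, $Y\cap\bar{Y'}$, $\bar Y\cap Y'$, $\bar Y\cap\bar{Y'}$ is nonempty iff the corresponding $X$-cell is, which is always possible because $|Y|\ge 2$ and $|\bar Y|\ge 2$; then round~3 is trivially won. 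So the gap is easily filled, but your stated justification does not do it.
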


Indeed, if in the second round Spoiler chooses another set, say, $X'\in V(G)$, then Duplicator chooses a subset $Y'\in V(H)$ such that $Y\cap Y'$ ($Y\cap \bar Y'$, $\bar Y\cap Y'$, $\bar Y\cap \bar Y'$) is nonempty iff $X\cap X'$ ($X\cap \bar X'$, $\bar X\cap X'$, $\bar X\cap \bar X'$) is nonempty. Then Duplicator obviously has a winning strategy in the third round.

If in the second round Spoiler chooses a vertex $v$, say, in $X$, then Duplicator chooses a vertex  $w\in Y$ of the same type as $v$. Again, in the third round Duplicator obviously has a winning strategy.\\

Therefore, to be able to apply Lemma \ref{main_strategy}, we have to prove that for each admissible $\alpha$ each type of a pair appears with asymptotic probability either 0 or 1 in $G(n,n^{-\alpha})$. For $\alpha<1$ this is done in Section~\ref{alpha<1}. When $\alpha>1$, the Duplicator's strategy for choosing the pair of type chosen by Spoiler is described in Section~\ref{alpha>1}.

\subsubsection{Duplicator's strategy if Spoiler chooses a vertex in the first round}\label{sec414}


Let $X \subset V(G)$, $x\in V(G)$, $Y\subset V(H)$, $y\in V(H)$. We say that the pair $(x,X)$ \textit{is equivalent to} the pair $(y,Y)$, if

\begin{itemize}

\item $x\in X$ iff $y\in Y$;

\item the type of $x$ w.r.t. $X$ is the same as the type of $y$ w.r.t. $Y$ (see Section \ref{classification}).
\end{itemize}





The importance of this definition is justified by the following easy lemma.
\begin{lemma}
Suppose that in the first two rounds of $\EHR^{\MSO}(G,H,3)$ a vertex $x$ and a set of vertices $X$ were chosen in $G$, and a vertex $y$ and a set of vertices $Y$ were chosen in $H$, no matter who chose what, and when. If $(x,X)$ is equivalent to $(y,Y)$, then in the last round Duplicator has a winning strategy.
\label{D_strat_vertex}
\end{lemma}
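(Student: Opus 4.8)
The plan is to analyze the single remaining round of $\EHR^{\MSO}(G,H,3)$ directly. By the symmetry of the game we may assume Spoiler plays in $G$ in the third round, choosing either a vertex $z\in V(G)$ or a subset $Z\subseteq V(G)$. Recall that Duplicator's winning condition only constrains adjacency and equality between chosen vertices and membership of a chosen vertex in a chosen set; there is no condition relating two chosen sets. Since the only objects chosen besides $X$ and $Y$ are the vertices $x$ and $y$, after Duplicator's reply the position is winning exactly when the new object in $H$ matches the new object in $G$ with respect to the correspondences $x\leftrightarrow y$ and $X\leftrightarrow Y$.

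If Spoiler picks a set $Z$, Duplicator need only arrange $x\in Z\Leftrightarrow y\in W$, which is done by taking $W=\{y\}$ when $x\in Z$ and $W=\varnothing$ otherwise; no further constraint applies, so Duplicator wins. Now suppose Spoiler picks a vertex $z$. If $z=x$, Duplicator answers $w=y$: since graphs have no loops and $x\in X\Leftrightarrow y\in Y$, this is a winning reply. If $z\neq x$, set $b_1=[z\in X]$ and $b_2=[z\sim x]$; Duplicator needs a vertex $w\neq y$ with $[w\in Y]=b_1$ and $[w\sim y]=b_2$. The existence of $z$ certifies precisely one of the four statements ``$x$ has a neighbour, or a non-neighbour, in $X\setminus\{x\}$, or in $\bar X\setminus\{x\}$'', selected by $b_1,b_2$ together with the bit $x\in X$. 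Each such statement is encoded in the type of $x$ with respect to $X$: the three-valued ``inside'' coordinate (dominating / common / isolated) records whether $x$ has a neighbour and whether it has a non-neighbour in the side of the partition $\{X,\bar X\}$ containing $x$, and the ``outside'' coordinate records the same for the other side; here the non-degeneracy assumption $|X|,|\bar X|\geq 2$ from Section~\ref{sec41} guarantees that both relevant parts are non-empty, so that ``dominating'' genuinely means ``has a neighbour, no non-neighbour'' and ``isolated'' genuinely means ``has a non-neighbour, no neighbour''. Since $(x,X)$ is equivalent to $(y,Y)$, the vertex $y$ has the same type with respect to $Y$, so the required neighbour or non-neighbour $w$ of $y$ exists on the appropriate side of $Y$; moreover $w\neq y$ automatically, since $w$ is either adjacent to $y$ or lies in a part of $\{Y,\bar Y\}$ not containing $y$. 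Thus Duplicator always has a legal reply and the final position is winning.

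The only thing to be careful with — more a matter of bookkeeping than a real obstacle — is the correspondence between the two bits ``has a neighbour''/``has a non-neighbour'' in a given part and the three-valued inside/outside coordinates of the type: this correspondence is bijective only because non-degeneracy rules out empty parts, which is exactly why that assumption was imposed at the start of Section~\ref{sec41}. With this in hand, every case of the last round is settled by choosing a single vertex or one of the trivial sets $\varnothing,\{y\}$, so Duplicator wins.
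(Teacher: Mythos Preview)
The paper explicitly leaves this lemma to the reader, so there is no proof to compare against; your case analysis is precisely the intended argument, and your observation that the MSO winning condition imposes no relation between two set-moves is the key point for the set case.

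Two minor remarks. First, the clause ``$w\neq y$ automatically, since $w$ is either adjacent to $y$ or lies in a part of $\{Y,\bar Y\}$ not containing $y$'' does not cover the sub-case $b_2=0$ with $w$ in the part containing $y$ (a non-neighbour of $y$ on $y$'s own side). The correct reason there is the one you had already given: the inside coordinate of the type of $y$ is not ``dominating'', so $y$ has a non-neighbour in $Y\setminus\{y\}$ (respectively $\bar Y\setminus\{y\}$). Second, you invoke $|X|,|\bar X|\ge 2$ so that the type faithfully encodes the two neighbour/non-neighbour bits, but to \emph{produce} the witness $w$ one needs the same non-degeneracy for $Y$. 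Without it the lemma is actually false (take $X=\{x\}$ versus $|Y|=2$ with matching types). In the paper's applications Duplicator's set is always nondegenerate as well, so this causes no trouble, but it is worth stating.
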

We leave the proof of the lemma to the reader. Assume that Spoiler chooses a subset $X\subset V(G)$ in the second round, and the vertices $x\in V(G),y\in V(H)$ were chosen in the first round.  Because of Lemma~\ref{D_strat_vertex}, it is enough to show that for all admissible values of $\alpha$ in Theorems~\ref{3-law} and \ref{FO-new} it is a.a.s. possible for Duplicator to choose the subset $Y\subset V(H)$, such that the pair $(y,Y)$ is equivalent to $(x,X)$.


  For each $\alpha\ne 1+\frac 1{\ell}$, where $\ell=1,\ldots, 6$, the graph $G(n,p)$ obeys the FO zero-one 3-law~(Theorem~\ref{FO-new} and \cite{Zhuk_law}). Therefore, by Theorem~\ref{ehren}, a.a.s.  Duplicator has a winning strategy in $\EHR^{\FO}(G(n,p(n)),G(m,p(n)),3)$ as $n,m\to\infty$. Let $y_1$ be a vertex chosen by Duplicator according to its (a.a.s.) winning strategy in the first round. If in the second round Spoiler chooses a vertex (say, $x_2\in V_n)$, then Duplicator chooses a vertex $y_2$ according to its (a.a.s.) winning strategy. Obviously, a.a.s. in the third round Duplicator wins. Assume that in the second round Spoiler chose a set (say, $X_2\subset V_n)$. We a.a.s. have $\mathrm{deg}(x_1),\mathrm{deg}(y_1)\in[0,n-4]$. Moreover, any winning strategy  for the FO zero-one 3-law must satisfy  $\deg(x_1)\ge i$ iff $\deg(y_1)\ge i$ for $i=1,2$. Using these properties, choose $Y_2\subset V_m$ in the following way: $Y_2$ contains $y_1$ iff $X_2$ contains $x_1$; $Y_2$ ($\bar Y_2$) contains one neighbor of $y_1$ iff $X_2$ ($\bar X_2$) contains at least one neighbor of $x_1$, and the same for non-neighbors.
  Then Duplicator chooses $Y_2$. By Lemma~\ref{D_strat_vertex}, a.a.s. Duplicator has a winning strategy in the last round.\\

  Concluding Section \ref{sec41}, we remark that for the proof of Theorem~\ref{3-law} it is sufficient to show that for each $\alpha>0,$ where $ \alpha\ne 4/5, 5/6$ and $1+\frac 1{\ell}$ for  $l\in \{1,\ldots, 6\}\cup \{8\}$, each type of a pair $X,\bar X$ appears with asymptotic probability either 0 or 1 in $G(n,n^{-\alpha})$. Then the application of Lemma \ref{main_strategy} will finish the proof.

\subsection{Pairs of subsets for $\alpha<1$}\label{alpha<1}
For a graph $G$ and a subset $X$ of its vertices, we denote by $G|_X$ the induced subgraph of $G$ on $X$. We say that $X$ is {\it complete}, if $G|_X$ is a complete graph, and {\it independent}, if $G|_X$ is an empty graph. A subset $X\subset V$ is {\it dense} ({\it sparse}) if it has an $X$-inside-dominating vertex ($X$-inside-isolated vertex), but is not complete (empty). Finally, we say that $X$ is {\it common} if neither it has an inside-dominating nor an inside-isolated vertex. Remark that $X$ cannot contain an inside-dominating and inside-isolated vertex at the same time.\\

Note that the set of \textit{inside} types of vertices in $X$ is determined for $X$ in each of the five classes above. Therefore, to determine the type of a pair $X, \bar X$, it is sufficient to determine to which of the five classes each of $X$ and $\bar X$ belong, and the set of \textit{outside} types of vertices from $X$ and from $\bar X$.

In this section, we determine \textit{all} the types of pairs that appear a.a.s. in $G(n,n^{-\alpha})$ for $\alpha<1$,  $\alpha\neq 4/5, 5/6$ (which we assume for the rest of Section \ref{alpha<1}), together with the ranges of $\alpha$ for which they do appear. We prove that the classification is \textit{complete}, i.e., that all the other types do not appear a.a.s. As we have discussed in Section \ref{sec413}, this classification is essential for the strategy of Duplicator in the Ehrenfeucht game.\\

\subsubsection{Auxiliary lemmas}

Let $p=n^{-\alpha}$. The next several lemmas will help us to limit the number of possible types.\\

\begin{lemma}\label{sizes}
Let $\alpha \in (0,1)$ and put $p=n^{-\alpha}$. Take a subset $X\subset V_n$ in $G(n,p)$. The following bounds on the size of $X$ hold a.a.s.: \\
1. If $X$ is complete, then $|X|\le \lfloor 2/\alpha \rfloor +1$.\\
2. If $X$ is dense, then $|X|=O(n^{1-\alpha})$.\\
3. If $X$ is sparse, then $|X|= n-\Omega(n^{1-\alpha})$.\\
4. If $X$ is independent, then $|X| = O(n^{\alpha}\log n)$.
\end{lemma}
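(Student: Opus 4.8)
\textbf{Proof plan for Lemma~\ref{sizes}.}

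The plan is to handle the four cases by combining the concentration results already at our disposal (Lemmas~\ref{degrees} and~\ref{ind_set}) with a first-moment / union-bound argument over all candidate sets of a given ``critical'' size. First I would observe that the four claims are monotone in $|X|$ in the appropriate direction, so it suffices to rule out, a.a.s., the existence of a set $X$ that violates the stated bound and is of size exactly one more (resp. one less) than the boundary value; any larger violating set contains a violating set of the critical size with the same defining property (a complete set stays complete when we delete a vertex, a set with an inside-isolated vertex keeps one, etc.), so a single union bound at the critical cardinality is enough.

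For part~1, the key point is that a complete $X$ of size $s$ is just a clique $K_s$ in $G(n,p)$, and the expected number of such cliques is $\binom{n}{s}p^{\binom{s}{2}} \le n^s n^{-\alpha s(s-1)/2} = n^{s(1-\alpha(s-1)/2)}$. This tends to $0$ as soon as $1-\alpha(s-1)/2<0$, i.e. $s-1>2/\alpha$, i.e. $s\ge \lfloor 2/\alpha\rfloor+2$; so a.a.s. $G(n,p)$ contains no clique on $\lfloor 2/\alpha\rfloor+2$ vertices, hence $|X|\le\lfloor 2/\alpha\rfloor+1$ for every complete $X$ (this is the standard clique-number threshold, cf.\ Theorem~\ref{threshold}). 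Part~4 is immediate from Lemma~\ref{ind_set}: an independent $X$ is an independent set, so $|X|\le \alpha(G(n,p)) = 2(1-\alpha)n^\alpha\ln n + O(n^\alpha\ln\ln n) = O(n^\alpha\log n)$ a.a.s.

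Parts~2 and~3 are dual and are where the real content lies. For part~2, a dense $X$ has an $X$-inside-dominating vertex $v$, i.e.\ a vertex adjacent to all of $X\setminus\{v\}$; hence $|X|-1\le\deg(v)$. By Lemma~\ref{degrees}, a.a.s.\ every vertex of $G(n,p)$ has degree at most $n^{1-\alpha}+cn^{1/2-\alpha/2}\sqrt{\ln n} = O(n^{1-\alpha})$, so $|X| = O(n^{1-\alpha})$ a.a.s. Part~3 follows by complementation: if $X$ is sparse it has an $X$-inside-isolated vertex $v$, which is nonadjacent to all of $X$ (in particular to the $|X|-1$ other vertices of $X$), so $X\setminus\{v\}$ lies in the non-neighborhood of $v$, giving $|X|-1\le n-1-\deg(v)$, and again by Lemma~\ref{degrees} $\deg(v)\ge n^{1-\alpha}-cn^{1/2-\alpha/2}\sqrt{\ln n}=\Omega(n^{1-\alpha})$ a.a.s., so $|X| = n-\Omega(n^{1-\alpha})$.

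I expect the ``main obstacle'' to be essentially bookkeeping rather than a genuine difficulty: one must make sure the union bound in part~1 is taken over the right (constant) size $s=\lfloor 2/\alpha\rfloor+2$ and that the edge-cases where $2/\alpha$ is an integer are handled by the strict inequality $\rho^{\max}>1/\alpha$ in Theorem~\ref{threshold} (so no clique of that size appears even at the threshold). In parts~2 and~3 the only thing to be careful about is that the degree bound from Lemma~\ref{degrees} is uniform over all vertices, which is exactly what that lemma provides, so the conclusion holds simultaneously for whatever dominating/isolated vertex $X$ happens to contain. No delicate second-moment computation is needed here; everything reduces to the concentration statements quoted in Section~\ref{pre}.
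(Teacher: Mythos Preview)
Your proposal is correct and follows exactly the same approach as the paper: the paper's proof simply says that statement~1 follows from Theorem~\ref{threshold}, statements~2 and~3 from Lemma~\ref{degrees}, and statement~4 from Lemma~\ref{ind_set}. You have spelled out precisely the one-line reductions the paper leaves implicit (clique count for part~1, the degree bound applied to the dominating/isolated vertex for parts~2 and~3, and the independence-number bound for part~4), so there is nothing to add.
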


\begin{proof} Statement 1 follows from Theorem \ref{threshold}. Both statements 2 and 3 follow from Lemma \ref{degrees}. Statement 4 follows from Lemma \ref{ind_set}.
\end{proof}

Lemma \ref{sizes} allows us to deduce that some pairs of types cannot occur as the types of a pair of the form  $X, \bar X$. E.g., By 1. in Lemma~\ref{sizes}, $X,\bar X$ cannot be both dense since then the sum of their cardinalities is constant. The pair ``dense--sparse'' is not excluded by Lemma \ref{sizes}, but is impossible in most situations, as shown in the next lemma.

\begin{lemma}
Let $\alpha \in (0,1)$ and put $p=n^{-\alpha}$.

1. A.a.s. there are no two distinct vertices $u, v$ in $G(n,p)$ with $N(u)\subset N(v)$.

2. A.a.s. in $G(n,p)$ there are no two distinct vertices $u,  v$ such that $v$ is an $X$-dominating vertex and $u$ is an $\bar X$-dominating or $\bar X$-isolated vertex for some $X\subset V_n$.

3. If $\alpha<1/2$, then a.a.s. in $G(n,p)$ there are no two distinct vertices $u,v$, such that $v$ is $X$-isolated and $u$ is $\bar X$-isolated for some $X\subset V_n$.

\label{no_dense}
\end{lemma}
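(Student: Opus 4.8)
The three statements are negative "non-existence" claims, and the natural approach for all of them is the first-moment method applied, uniformly over the choice of $X$, to configurations of two vertices. First I would reformulate each statement so that the dependence on $X$ disappears. For Statement 1, the condition $N(u)\subset N(v)$ does not mention $X$ at all, so we simply bound the expected number of ordered pairs $(u,v)$ with $N(u)\setminus\{v\}\subset N(v)$. Fixing $u$ and $v$, each of the remaining $n-2$ vertices $w$ must satisfy the implication $w\sim u\Rightarrow w\sim v$, which holds with probability $1-p(1-p)$; hence the expected number of such ordered pairs is at most $n^2(1-p+p^2)^{\,n-2}=n^2 e^{-(1+o(1))np}$, and since $np=n^{1-\alpha}\to\infty$ this tends to $0$. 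By Markov's inequality the event is a.a.s. empty.

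For Statement 2, observe that "$v$ is $X$-dominating and $u$ is $\bar X$-dominating (or $\bar X$-isolated) for some $X$" is equivalent to an $X$-free statement about $u$ and $v$: one just has to check that for every vertex $w\notin\{u,v\}$ the required adjacency pattern is consistent with some assignment of $w$ to $X$ or $\bar X$. Concretely, $v$ is $X$-dominating means $v$ is adjacent to every vertex of $X\setminus\{v\}$, so any $w\notin N(v)$ must lie in $\bar X$; if $u$ is $\bar X$-dominating then such $w$ (with $w\neq u$) must be adjacent to $u$, while if $u$ is $\bar X$-isolated then such $w$ must be non-adjacent to $u$. In the first subcase this forces: for all $w\neq u,v$, $w\sim v$ or $w\sim u$ — i.e. $u,v$ dominate all of $V_n$ — which fails a.a.s. because the expected number of such pairs is at most $n^2(1-(1-p)^2)^{\,n-2}$... wait, that is close to $1$, so the complementary event (some $w$ adjacent to neither) has probability $\to1$; more carefully the expected number of pairs with $V_n\setminus\{u,v\}\subset N(u)\cup N(v)$ is $n^2(2p-p^2)^{n-2}\to 0$. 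In the second subcase (both $X$- and $\bar X$-isolated... actually $v$ $X$-dominating, $u$ $\bar X$-isolated): every $w\notin N(v)$, $w\neq u$, has $w\notin N(u)$, so $N(u)\cup\{v\}\supseteq$... rearranging, $V_n\setminus N(v)\subseteq N(u)\cup\{u,v\}$ fails? No — we get $w\notin N(v)\Rightarrow w\notin N(u)$, i.e. $N(u)\subseteq N(v)\cup\{v\}$, so $N(u)\setminus\{v\}\subseteq N(v)$, which is exactly Statement 1 (possibly with the roles of the two vertices, and the harmless extra vertex $v$). Hence Statement 2 reduces to Statement 1 together with the "double-domination" first-moment bound above.

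For Statement 3, assume $\alpha<1/2$. Here $v$ is $X$-isolated (non-adjacent to all of $X$, except possibly itself) and $u$ is $\bar X$-isolated. Again eliminate $X$: for every $w\notin\{u,v\}$, if $w\sim v$ then $w\notin X$, hence $w\in\bar X$, hence $w\nsim u$; symmetrically if $w\sim u$ then $w\nsim v$. So the condition says precisely that $N(u)$ and $N(v)$ are disjoint (apart from the vertices $u,v$ themselves), i.e. $u$ and $v$ have no common neighbor among the other $n-2$ vertices and, since $\alpha<1/2$ forces every degree to be $n^{1-\alpha}(1+o(1))\gg\sqrt n$, two such large neighborhoods inside an $n$-element universe must intersect. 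Quantitatively: fix $u,v$; the probability that no $w\neq u,v$ is adjacent to both is $(1-p^2)^{n-2}=e^{-(1+o(1))n p^2}=e^{-(1+o(1))n^{1-2\alpha}}$, and $n^{1-2\alpha}\to\infty$ since $\alpha<1/2$; so the expected number of such ordered pairs is at most $n^2 e^{-(1+o(1))n^{1-2\alpha}}\to 0$, and Markov finishes it. (The condition $\alpha<1/2$ is used exactly here: for $\alpha=1/2$ the product $n^2 e^{-n^{1-2\alpha}}$ is $\Theta(n^2)$ and the argument breaks, which is why the hypothesis appears.)

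\textbf{Main obstacle.} The only genuinely non-routine point is the reduction of the $X$-quantified statements in parts 2 and 3 to clean $X$-free adjacency conditions on the pair $\{u,v\}$; one must argue that the existence of a witnessing $X$ is equivalent to every third vertex being individually placeable, which relies on the fact that the defining properties ("$X$-dominating", "$X$-isolated", "$\bar X$-dominating", "$\bar X$-isolated") impose, for each vertex $w\notin\{u,v\}$ separately, a membership constraint on $w$, and these per-vertex constraints are jointly satisfiable iff each is (there is no interaction between different $w$'s, and $u,v$ can be placed freely subject only to the "except maybe itself" clauses). Once that bookkeeping is done, the three bounds are standard first-moment estimates with $np=n^{1-\alpha}\to\infty$ (parts 1,2) and $np^2=n^{1-2\alpha}\to\infty$ (part 3).
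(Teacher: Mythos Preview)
Your proof is correct and follows essentially the same route as the paper: parts 1 and 3 are identical first-moment computations, and in part 2 you reduce the ``$\bar X$-isolated'' subcase to part~1 just as the paper does. The only minor difference is the ``$\bar X$-dominating'' subcase of part~2, where the paper invokes the degree-concentration lemma (a.a.s.\ $\deg u+\deg v=O(n^{1-\alpha})<n$) instead of your direct first-moment bound $n^2(2p-p^2)^{n-2}\to 0$; both arguments are equally short and valid.
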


\begin{proof} 1. The probability that for some distinct $u,v\in V$ we have $N(u)\setminus \{v\}\subset N(v)\setminus\{u\}$ is at most $$n^2(1-p(1-p))^{n-2}\le n^2e^{-(n-2)p(1-p)} = o(1)\ \text{  as }\ n\to \infty.$$

2. By Lemma \ref{degrees}, a.a.s. $\deg v+\deg u = O(n^{1-\alpha})$ for all pairs $u,v\in V_n$. Therefore, a.a.s. there are no pairs $v,u\in V$ where $v$ is $X$-dominating and $u$ is  $\bar X$-dominating. The second possibility   is ruled out by the proof of part 1 of this lemma.


3. The probability that there exist $u,v\in V_n$ such that $u$ is $X$-isolated and $v$ is $\bar X$-isolated, is at most  $n^2(1-p^2)^{n-2}\le n^2e^{-p^2(n-2)}$. This expression tends to zero if $\alpha<1/2$ and $n\to \infty$.
\end{proof}
If $X$ is dense then there is an $X$-dominating vertex. Thus, there is no {\it other} $\bar X$-isolated vertex, and therefore $\bar X$ cannot be sparse, unless there is a vertex $v$ which is both $X$-dominating and  $\bar X$-isolated. If $v$ is $X$-dominating and $\bar X$-isolated, or vice versa, then we say that $v$ is \textit{$X$-special}. Note that due to the concentrations of degrees (Lemma \ref{degrees}) a.a.s. no vertex is  both $X$- and $\bar X$-dominating or both $X$- and $\bar X$-isolated.

\subsubsection{Classification of subsets without special vertices}

For a moment we consider only the sets $X$ for which there are no special vertices $v$. The somewhat special case of $X$-special vertices we treat later.

The graph on Figure~\ref{fig2} represents the possibilities that are not ruled out by Lemma \ref{sizes} and 2. of Lemma \ref{no_dense}. The vertices are the five possible types of subsets, and the edges are represented by dashed lines. A non-edge between two vertices means that there cannot be a pair $X,\bar X$ where $X$ and $\bar X$ belong to the types of sets represented by the corresponding vertices (under the assumption that there are no $X$-special vertices). Note that there are two loops in the graph.

The list below the name of the type give the possible types of vertices from the set. All pairs of types of sets and/or vertices that do not appear in Figure~\ref{fig2}
do not appear in $G(n,p)$ a.a.s. (assuming that the set does not have special vertices). In particular, complete and independent sets can have only outside-common vertices, since any non-outside-common vertex in a complete or independent set is special.

\begin{figure}
\begin{center}  \includegraphics[width=120mm]{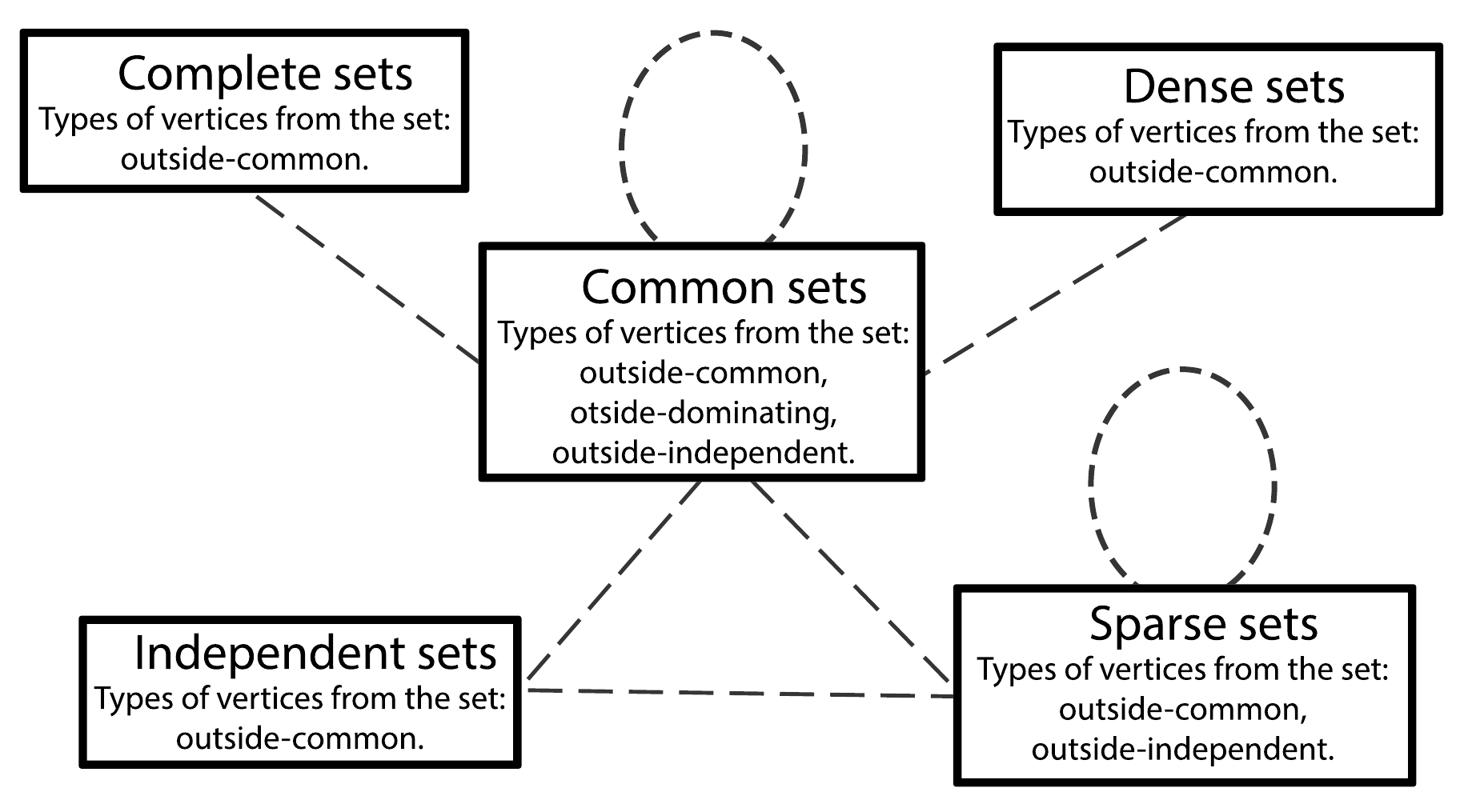}\label{pic1}  \caption{The pairs of $X,\bar X$ and the types of their vertices that are allowed by Lemmas~\ref{sizes} and~\ref{no_dense}.}\label{fig2} \end{center}
\end{figure}
Unfortunately, Figure~\ref{fig2} does not contain all the information that we need: it does not specify which types of vertices can appear \textit{simultaneously} in $X$ or in $\bar X$, provided that $X,\bar X$ are of a given type. The following two lemmas allow us to refine the classification.
Recall that a subset $X$ \textit{nontrivial}, if both $X$ and $\bar X$ have cardinalities at least $2$.
\begin{lemma}
Let $\alpha \in (0,1)$ and put $p=n^{-\alpha}$.
A.a.s. any nontrivial subset $X\subset V_n$ has an $X$-outside-common vertex.
\label{common_vertices}
\end{lemma}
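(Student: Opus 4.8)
The plan is to show that for every nontrivial $X\subset V_n$ there exists a vertex $v$ that is $X$-outside-common, i.e.\ $v\in\bar X$ has at least one neighbor in $X$ and at least one non-neighbor in $X$. I would split the argument into three regimes according to the size of $X$ relative to the two boundary scales $n^{1-\alpha}$ and $n^{\alpha}\log n$ that appear in Lemmas~\ref{degrees} and~\ref{ind_set}. The key point is that a bad event --- \emph{no} vertex of $\bar X$ is outside-common --- forces every vertex of $\bar X$ to be either $X$-isolated or $X$-dominating, and both of these are very restrictive.

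\textbf{First regime: $X$ large, $|\bar X|$ small.} Suppose $|\bar X|\le\sqrt n$, say. If $X$ has no outside-common vertex, then every $v\in\bar X$ is either adjacent to all of $X$ or to none of $X$. But by Lemma~\ref{degrees}, a.a.s.\ every vertex has degree $(1+o(1))n^{1-\alpha}$, while $|X|=n-|\bar X|\ge n-\sqrt n$; hence no vertex can be $X$-dominating (its degree would have to be $\ge|X|-1$, which is $\gg n^{1-\alpha}$), and no vertex can be $X$-isolated (its degree would be at most $|\bar X|-1\le\sqrt n-1$, contradicting $\deg(v)=(1+o(1))n^{1-\alpha}$ once $\alpha<1/2$; for $\alpha\ge 1/2$ one instead uses that being $X$-isolated for all of $\bar X$ means $X$ is independent except for edges inside $\bar X$, and by Lemma~\ref{ind_set} an independent set has size $O(n^\alpha\log n)=o(n)$, contradicting $|X|\ge n-\sqrt n$). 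In all cases a contradiction; so in this regime a.a.s.\ an outside-common vertex exists.

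\textbf{Second regime: both $|X|$ and $|\bar X|$ "macroscopic", say both $\ge n^{0.9}$.} Here I would argue directly by a union bound. Fix the partition and a candidate vertex $v\in\bar X$; the probability that $v$ is \emph{not} outside-common, i.e.\ $v$ sends edges to all of $X$ or to none of $X$, is $p^{|X|}+(1-p)^{|X|}\le p^{|X|}+e^{-p|X|}$. With $|X|\ge n^{0.9}$ and $p=n^{-\alpha}$, $\alpha<1$, we have $p|X|\ge n^{0.9-\alpha}$, which tends to infinity, so this probability is $e^{-n^{\Omega(1)}}$. The events "$v$ is not outside-common" over distinct $v\in\bar X$ are \emph{not} independent (they share the randomness of edges inside $X$), so I cannot simply multiply; instead, for $X$ being the bad set it suffices that \emph{some} single fixed $v\in\bar X$ is not outside-common, which already has superpolynomially small probability, and then union-bound over the at most $2^n$ choices of $X$ --- wait, $2^n$ kills $e^{-n^{\Omega(1)}}$. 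So a more careful count is needed: one should not fix $X$ and then look at one $v$, but rather observe that a bad $X$ (no outside-common vertex) is determined by declaring, for each $v\in\bar X$, a bit "adjacent to all / none of $X$," so the structure is rigid. The right approach is: condition on the edges inside $X$ being typical (which happens a.a.s.\ and in particular $G|_X$ has no isolated and no universal vertex once $|X|\ge n^{0.9}$, by the small-subgraph results, Theorem~\ref{threshold}), and then... actually the cleanest route is the one used for the middle case below, so I fold this regime into it.

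\textbf{Third regime / unified middle argument.} The main case is when $X$ is neither tiny nor almost-everything; the obstruction "every $v\in\bar X$ is $X$-isolated or $X$-dominating" means $\bar X=A\cup B$ where $A$ is the set of $X$-isolated vertices and $B$ the set of $X$-dominating vertices of $\bar X$. Now observe: if $B\ne\varnothing$, pick $b\in B$; then $N(b)\supset X$, and since $|X|=|V_n|-|\bar X|$ with $|\bar X|$ bounded below by the relevant scale, Lemma~\ref{degrees} (if $\alpha<1$ so degrees concentrate at $n^{1-\alpha}$) forces $|X|=O(n^{1-\alpha})$, i.e.\ $X$ itself is small; symmetrically if $A$ spans too much we contradict the independence bound of Lemma~\ref{ind_set}. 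This reduces everything to the case $|X|=O(n^{1-\alpha})$, where I can afford the union bound: the number of sets $X$ of size $s\le Cn^{1-\alpha}$ is $\binom{n}{s}\le n^{s}=e^{s\ln n}\le e^{Cn^{1-\alpha}\ln n}$, whereas the probability that a \emph{specific} such $X$ has all of $\bar X$ non-outside-common is at most $\big(p^{s-?}\cdots\big)$ --- no, wait: with $|X|=s$ small, "$v$ not outside-common" has probability $p^{s}+(1-p)^{s}\approx 1-sp$, which is \emph{close to $1$}, not small, so the union bound over $\bar X$ gives nothing in this direction. Instead, in the small-$X$ regime the statement must be proved by exhibiting an outside-common vertex directly: since $s\ge 2$ and $X$ is nontrivial, $|\bar X|\ge n-Cn^{1-\alpha}$ is huge; take any two vertices $x_1\neq x_2\in X$ and use Lemma~\ref{extension} (with $k$-part empty or chosen appropriately, $k<1/\alpha$) to conclude that a.a.s.\ there is $v\in\bar X$ adjacent to $x_1$ and non-adjacent to $x_2$ --- such a $v$ is by definition $X$-outside-common. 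This works as long as the relevant extension axiom holds, i.e.\ for $\alpha<1$ (so $k=1<1/\alpha$ is allowed, noting $2=m\ge k$ in the notation of Lemma~\ref{extension}), which is exactly our hypothesis.

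\textbf{The main obstacle.} The delicate part is stitching the regimes together so that the union bound is never applied against an event of probability close to $1$ and never against $2^n$ choices with only $e^{-n^{\Omega(1)}}$ savings. The clean resolution, as sketched, is: (i) if $X$ is "large" ($|X|$ comparable to $n$), rule out $X$-isolated and $X$-dominating vertices of $\bar X$ outright via the degree concentration (Lemma~\ref{degrees}) and the independence-number bound (Lemma~\ref{ind_set}), so the obstruction cannot even occur; (ii) if $X$ is "small" ($|X|=O(n^{1-\alpha})$, in particular including all complete and all bounded sets), exhibit the outside-common vertex by the one-point extension Lemma~\ref{extension} applied to two fixed vertices of $X$. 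Every pair $X,\bar X$ falls into one of these two cases because the intermediate possibility --- $\bar X$ containing an $X$-dominating or $X$-isolated vertex while $X$ is large --- is exactly what Lemmas~\ref{degrees} and~\ref{ind_set} forbid a.a.s. Hence a.a.s.\ every nontrivial $X$ has an $X$-outside-common vertex, which is the claim.
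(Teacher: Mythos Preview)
First, a definitional slip: in the paper, an \emph{$X$-outside-common} vertex is a vertex $v\in X$ with at least one neighbor and at least one non-neighbor in $\bar X$, not a vertex of $\bar X$ as you write. Your reading is equivalent by the symmetry $X\leftrightarrow\bar X$, so this does not invalidate the logic, but it does mean your whole argument is phrased for the wrong side.

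More importantly, the paper's proof is a single direct union-bound computation, with no appeal to Lemmas~\ref{degrees},~\ref{ind_set}, or~\ref{extension} at all. For $|X|=i$, the events ``$v$ is not $X$-outside-common'' for $v\in X$ are mutually independent (they depend on disjoint edge sets $\{v\}\times\bar X$), each having probability $p^{n-i}+(1-p)^{n-i}$. Hence the probability that $X$ has no outside-common vertex is exactly $(p^{n-i}+(1-p)^{n-i})^i$, and one simply shows $\sum_{i=2}^{n-2}\binom{n}{i}(p^{n-i}+(1-p)^{n-i})^i=o(1)$ by elementary estimates. No regime splitting is needed.

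Your approach, by contrast, has genuine gaps. The appeal to Lemma~\ref{extension} in the small-$X$ case does not work as stated: that lemma guarantees a vertex $z$ adjacent to $x_1$ and non-adjacent to $x_2$, but gives no control over whether $z\in\bar X$ or $z\in X$; since $|X|$ can be of the same order $n^{1-\alpha}$ as the number of such $z$, you cannot conclude $z\in\bar X$. In the ``large $X$'' case, your claim that Lemmas~\ref{degrees} and~\ref{ind_set} rule out all $X$-isolated vertices in $\bar X$ is incorrect when, say, $|X|=|\bar X|=n/2$: the degree bound only forbids $X$-dominating vertices, and the independence-number bound says nothing about the event ``no edges between $X$ and $\bar X$'' (which is what ``all of $\bar X$ is $X$-isolated'' means). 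You would still need a separate union-bound argument here, at which point you might as well do the paper's direct computation from the start.
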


\begin{proof} Let $X\subset V$ be a nontrivial set of cardinality $i$. The probability that one of the vertices of $X$ is outside-common is $1-(p^{n-i}+(1-p)^{n-i})^i$. Therefore, the probability that there exists a nontrivial $X$ with no $X$-common vertices is at most
$$
\sum_{i=2}^{n-2} {n\choose i} (p^{n-i}+(1-p)^{n-i})^i\leq \sum_{i=2}^{n-2} {n\choose i} ((1-p)^{n-i-1})^i\leq 2\sum_{i=2}^{\lfloor n/2\rfloor} {n\choose i} ((1-p)^{n-i-1})^i\leq
$$
$$
2\sum_{i=2}^{\lfloor n/2\rfloor} (ne^{-p(\lfloor n/2\rfloor+1)})^i<\frac{2ne^{-p(\lfloor n/2\rfloor+1)}}{(1-p)(1-ne^{-p(\lfloor n/2\rfloor+1)})}=o(1)\quad \text{for } n\to\infty.
$$
\vskip-0.4cm
\end{proof}

\begin{lemma}

Let $\alpha \in (0,1)$ and put $p=n^{-\alpha}$. A.a.s. for any complete $X$ there is an $\bar{X}$-outside-sparse vertex, and there is $K_3$ in $G(n,p)$.
\label{clicks}
\end{lemma}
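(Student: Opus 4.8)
Looking at Lemma~\ref{clicks}, I need to prove two things a.a.s.: (1) for any complete set $X$, there exists an $\bar{X}$-outside-sparse vertex, and (2) there is a triangle $K_3$ in $G(n,p)$.

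\textbf{Plan of proof.} The second claim is immediate: since $\alpha<1$, we have $\rho^{\max}(K_3)=1<1/\alpha$, so by Theorem~\ref{threshold} a.a.s. $G(n,p)$ contains a copy of $K_3$. The first claim is the substantive part. Recall from Lemma~\ref{sizes}.1 that a.a.s. every complete set $X$ has $|X|\le\lfloor 2/\alpha\rfloor+1=:c$, so it suffices to show a.a.s., for every set $X$ with $2\le|X|\le c$ that induces a clique, that $\bar{X}$ contains an $\bar{X}$-outside-sparse vertex, i.e., a vertex $u\in\bar X$ that is nonadjacent to all of $X$ but adjacent to at least one vertex of $\bar X$ (so $u$ is $\bar X$-isolated witnessing sparseness, and is not itself isolated, so $\bar X$ is genuinely sparse rather than empty). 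Actually, re-reading the definition: $\bar X$ is \emph{sparse} if it has an $\bar X$-inside-isolated vertex but $G|_{\bar X}$ is nonempty; an \emph{$\bar X$-outside-sparse} vertex should be a vertex $u\in\bar X$ that is $\bar X$-inside-isolated (nonadjacent to everything in $\bar X$) — wait, that makes $u$ an isolated-ish vertex. Let me instead read it as: $u$ is $\bar X$-inside-isolated, which combined with $G|_{\bar X}$ being nonempty gives that $\bar X$ is sparse. Either way, the combinatorial core is a first-moment / union-bound estimate.

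\textbf{Main steps.} First I would fix $X$ with $|X|=i\le c$ inducing a clique. The probability that $X$ induces a clique is $p^{\binom i2}$, which is polynomially small but only by a constant exponent (since $i$ is bounded). For a fixed vertex $u\notin X$, the probability that $u$ is nonadjacent to all of $X$ is $(1-p)^i\ge 1-ip = 1-o(1)$; so a \emph{typical} vertex outside $X$ is $X$-isolated. Hence the probability that \emph{no} vertex of $\bar X$ is $X$-isolated is at most $(1-(1-p)^i)^{n-i}\le(ip)^{n-i}=o(n^{-M})$ for every constant $M$. I then need the chosen $u$ to also have a neighbor in $\bar X$: given that $u$ is $X$-isolated, the probability $u$ has no neighbor among the other $n-i-1$ vertices of $\bar X$ is $(1-p)^{n-i-1}=o(1)$, and in fact I can bundle this in: the probability that a fixed $u\notin X$ is $X$-isolated but has no $\bar X$-neighbor is at most $(1-p)^{n-i-1}$, so the probability \emph{every} $u\in\bar X$ fails to be $\bar X$-outside-sparse is at most $\bigl((1-(1-p)^i)+(1-p)^{n-i-1}\bigr)^{n-i}$; the bracket is $\le ip+(1-p)^{n-i-1}=O(p)$, so this is $o(n^{-M})$ for all $M$. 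Finally, union bound over all at most $\sum_{i=2}^{c}\binom ni\le c\cdot n^c$ candidate sets $X$ (not even using the clique constraint), multiplied by the above super-polynomially small quantity, gives $o(1)$. So a.a.s. \emph{every} set of bounded size has an outside-sparse vertex in its complement, which in particular handles all complete sets.

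\textbf{Expected obstacle.} The only delicate point is bookkeeping the two competing requirements on $u$ — being $X$-isolated and having a $\bar X$-neighbor — simultaneously across all $u\in\bar X$, since these are not independent events across different $u$'s in an entirely clean way (the edges inside $\bar X$ are shared). The clean fix is to first restrict attention to $X$-isolation, which \emph{is} independent across the $u$'s (it only concerns $X$--$\bar X$ edges), concluding a.a.s. there are many (say $\Omega(n)$) $X$-isolated vertices; then, conditionally on that large set $I\subset\bar X$ of $X$-isolated vertices, the probability that $G|_I$ is empty is $(1-p)^{\binom{|I|}2}$, which is super-polynomially small, and if $G|_I$ is nonempty some vertex of $I$ has a neighbor in $I\subset\bar X$ and is thus $\bar X$-outside-sparse. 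Union-bounding this two-stage estimate over the $O(n^c)$ choices of $X$ closes the argument. No step requires more than the first-moment method and the already-cited Lemma~\ref{sizes} and Theorem~\ref{threshold}.
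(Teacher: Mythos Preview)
Your argument is correct, but it takes a slightly different and longer route than the paper's own proof. The paper dispatches the main claim in one line: after bounding $|X|$ by $\lceil 2/\alpha\rceil+1$ via Theorem~\ref{threshold}, it simply invokes Lemma~\ref{extension} with $k=0$ and $m$ equal to that bound, which directly yields, a.a.s.\ for every tuple of at most $m$ vertices, a vertex $z$ nonadjacent to all of them; since $X$ is a clique of size $\ge 2$, this $z$ cannot lie in $X$, so $z\in\bar X$ is the required vertex. What you do instead is re-derive that special case of Lemma~\ref{extension} from scratch via a first-moment/union-bound computation, which is valid but redundant given that the extension lemma is already available.

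Two further remarks. First, your hesitation over the phrase ``$\bar X$-outside-sparse vertex'' is understandable (the paper is loose here), but in context it just means an $\bar X$-outside-\emph{isolated} vertex, i.e.\ $u\in\bar X$ nonadjacent to all of $X$; the extra requirement you impose --- that $u$ also have a neighbour inside $\bar X$ --- is not needed, and your two-stage fix in the ``Expected obstacle'' paragraph, while correct, is therefore unnecessary. Second, the $K_3$ part is handled identically in both proofs (Theorem~\ref{threshold} with $\rho^{\max}(K_3)=1<1/\alpha$). So: nothing is wrong, but you could shorten the argument to two citations.
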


\begin{proof} By Theorem~\ref{threshold}, a.a.s. any complete subgraph of $G(n,p)$ has at most $\lceil 2/\alpha \rceil +1$ vertices. By Lemma~\ref{extension}, a.a.s., for any $X$ such that $G(n,p)|_X$ is complete, there is an $\bar{X}$-outside-sparse vertex. Finally, by Theorem~\ref{threshold}, a.a.s. there is a copy of $K_3$ in $G(n,p)$.\end{proof}

In Table~\ref{Table}, we give all possible types of pairs of $X$ and $\bar X$ in $G(n,n^{-\alpha})$ that do not have a special vertex, up to a permutation of $X$ and $\bar{X}$.


Each type in the table is assigned a range of values of $\alpha$. Any type appears a.a.s. in $G(n,n^{-\alpha})$ iff it is present in the table and the value of $\alpha$ belongs to the prescribed range. Otherwise, the type a.a.s. do not appear in $G(n,n^{-\alpha})$.

The table reflects the restrictions that are imposed on the types of pairs by Figure~\ref{fig2} and Lemmas~\ref{no_dense}, \ref{common_vertices} and \ref{clicks}. It is not difficult, although a bit tedious, to verify that the 26 cases of the table are\textit{ exactly }the ones that are left after applying the aforementioned statements.

The ranges of $\alpha$ are, however, unexplained in many cases. Therefore, we are left to show that, first, outside the specified ranges of $\alpha$ the corresponding types a.a.s. do not appear in $G(n,n^{-\alpha})$, and that, second, inside the ranges they a.a.s. do appear. 
\renewcommand{\arraystretch}{1.1}
\begin{table}[!ht]
\centering
\begin{tabular}{|c|c|c|c|}

  \hline

  & Type of subsets $X$ $|$ $\bar{X}$   &  Outside types of vertices in $X$ $|$ $\bar{X}$  & $\alpha$  \\

  \hline

  1 & complete $|$ common  & common $|$ dominating, isolated, common  & $(0,1)$ \\

  2 & complete $|$ common  & common $|$ isolated, common  & $(0,1)$ \\

  \hline

  3 & dense $|$ common  & common $|$ dominating, isolated, common  & $(0,4/5)$ \\

  4 & dense $|$ common  & common $|$ isolated, common  & $(0,1)$ \\

  5 & dense $|$ common  & common $|$ dominating, common  & $(0,1/3)$ \\

  6 & dense $|$ common  & common $|$ common  & $(0,1/2)$ \\

  \hline

  7 & independent $|$ common  & common $|$ dominating, isolated, common  & $(0,1)$ \\

  8 & independent $|$ common  & common $|$ isolated, common  & $(0,1)$ \\

  9 & independent $|$ common  & common $|$ dominating, common  & $(0,1/3)$ \\

  10 & independent $|$ common  & common $|$ common  & $(0,1)$ \\

  \hline

  11 & independent $|$ sparse  & common $|$ isolated, common  & $[2/3,1)$ \\

  12 & independent $|$ sparse  & common $|$ common  & $[2/3,1)$ \\

  \hline

  13 & sparse $|$ sparse  & isolated, common $|$ isolated, common  & $[1/2,1)$ \\

  14 & sparse $|$ sparse  & isolated, common $|$ common  & $[1/2,1)$ \\

  15 & sparse $|$ sparse  & common $|$ common  & $[1/2,1)$ \\

  \hline

  16 & sparse $|$ common  & isolated, common $|$ isolated, common  & $[1/2,1)$ \\

  17 & sparse $|$ common  & isolated, common $|$ common  & $[1/2,1)$ \\

  18 & sparse $|$ common  & common $|$ common  & $(0,1)$ \\

  19 & sparse $|$ common  & common $|$ isolated, common  & $(0,1)$ \\

  20 & sparse $|$ common  & common $|$ dominating, common  & $(0,1/2)$ \\

  21 & sparse $|$ common  & common $|$ dominating, isolated, common  & $(0,1)$ \\

  \hline

  22 & common $|$ common  & isolated, common $|$ isolated, common  & $[1/2,1)$ \\

  23 & common $|$ common  & common $|$ common  & $(0,1)$ \\

  24 & common $|$ common  & common $|$ isolated, common  & $(0,1)$ \\

  25 & common $|$ common  & common $|$ dominating, common  & $(0,1/2)$ \\

  26 & common $|$ common  & common $|$ dominating, isolated, common  & $(0,5/6)$ \\

  \hline

\end{tabular}
\vspace{-0.2cm}

\caption{\small Types of pairs of subsets without special vertices.}
\label{Table}
\end{table}
\vskip+0.2cm

We will frequently use the following corollary of Lemma \ref{no_neighb_ext}:
\begin{corol}\label{common_neighbors2} Fix $\alpha\in [1/2,1)$. A.a.s. for any vertex $v$ there are two vertices $u,w$, such that $u\sim v, w\nsim v$ and $N(v)\cap N(u)=\varnothing, N(v)\cap N(w) = \varnothing$.
\end{corol}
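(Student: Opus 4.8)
The statement to prove is Corollary~\ref{common_neighbors2}: for fixed $\alpha\in[1/2,1)$, a.a.s. for every vertex $v$ in $G(n,n^{-\alpha})$ there exist a neighbor $u$ and a non-neighbor $w$ of $v$ with $N(v)\cap N(u)=\varnothing$ and $N(v)\cap N(w)=\varnothing$.

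The plan is to read this off directly from Lemma~\ref{no_neighb_ext} by taking $m=1$. For the neighbor $u$, apply Lemma~\ref{no_neighb_ext} with $k=1$ and the single vertex $x_1=v$: it yields a vertex $z$ adjacent to $v$ with $(N(z)\cap N(v))\setminus\{v\}=\varnothing$; since $z\notin N(z)$ and $z\notin N(v)$ is false (indeed $z\in N(v)$, but $z\notin N(z)$ as there are no loops), removing $\{v\}$ from the intersection changes nothing relevant — more carefully, $N(z)\cap N(v)$ could a priori contain $v$ itself only if $v\in N(v)$, which never happens, so in fact $N(z)\cap N(v)=(N(z)\cap N(v))\setminus\{v\}=\varnothing$. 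Set $u:=z$. For the non-neighbor $w$, apply Lemma~\ref{no_neighb_ext} again with $k=0$, $m=1$, $x_1=v$: this gives a vertex $z'$ \emph{not} adjacent to $v$ with $(N(z')\cap N(v))\setminus\{v\}=\varnothing$, and the same observation about $v\notin N(v)$ shows $N(z')\cap N(v)=\varnothing$. Set $w:=z'$.

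The only thing to check is that the hypotheses of Lemma~\ref{no_neighb_ext} are met: it requires $\alpha\ge 1/2$, $k\in\{0,1\}$, and $m\ge 1$, all of which hold in both invocations. Since each application of Lemma~\ref{no_neighb_ext} holds a.a.s. and we use it only twice, the union of the two (complementary) bad events still has probability $o(1)$, so a.a.s. both $u$ and $w$ exist for every $v$ simultaneously. This completes the proof.

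There is essentially no obstacle here — the corollary is a direct specialization of Lemma~\ref{no_neighb_ext} to $m=1$, combined with the trivial remark that a graph has no loops so $v\notin N(v)$, which lets us drop the exceptional vertex from the stated intersection. The only mildly delicate point is making sure the ``$\setminus\{x_1,\dots,x_m\}$'' in the lemma does not hide a common neighbor we care about; but with $m=1$ and $x_1=v$, the removed vertex is $v$ itself, which is never in $N(v)$, so the cleaned intersection equals the full intersection $N(z)\cap N(v)$.
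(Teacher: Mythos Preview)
Your proof is correct and follows exactly the same route as the paper: apply Lemma~\ref{no_neighb_ext} twice with $m=1$, once with $k=1$ (to get the neighbor $u$) and once with $k=0$ (to get the non-neighbor $w$). Your additional remark that $v\notin N(v)$, so the excision of $\{x_1,\dots,x_m\}=\{v\}$ from the intersection is vacuous, is a detail the paper leaves implicit.
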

To obtain this corollary, we apply Lemma \ref{no_neighb_ext} twice: first, with $m=1$, $k=1$ and $v,u$ playing the roles of $x_1,z$, respectively, and, second, with $m=1$, $k=0$ and $v,w$ playing the roles of $x_1,z$, respectively.

It is easy to see that in all cases when the range of a type is $\alpha\in[1/2,1)$, the nonexistence of the type outside the range is explained using Lemma \ref{no_dense}: for $\alpha<1/2$, a.a.s. there are no two vertices $u,v$, such that $u$ is $X$-isolated and $v$ is $\bar X$-isolated.  Similarly, if the range for a given type is $(0,1/2)$, then the nonexistence of this type for $\alpha\ge 1/2$ is explained by Corollary \ref{common_neighbors2}: for each type with the admissible range $\alpha\in (0,1/2)$, we have a $X$-dominating vertex, and thus we must have an $\bar X$-outside-isolated vertex, guaranteed by Corollary \ref{common_neighbors2}.

For all types we make use of Lemma~\ref{common_vertices}, that guarantees the presense of $X$-outside-common and $\bar X$-outside-common vertices in all cases. In what follows we do not repeat this reference.\\


{\bf Types 1, 2.}

In these cases we have to prove the a.a.s. existence only. Take as $X$  an edge of a triangle for type 1 and a maximal clique for type 2. By Lemma~\ref{clicks}, both types a.a.s. exist in $G(n,p)$.\\

{\bf Types 3 -- 6.}

For these cases we need the following auxiliary lemmas.

\begin{lemma}
Fix $\alpha\in (0,1)$ and put $p=n^{-\alpha}$. A.a.s. in $G(n,p)$ for any $2\lfloor n^{\alpha}\ln \ln n\rfloor$ vertices there is a vertex which is not adjacent to any of them. Fix $2\lfloor n^{\alpha}\ln n\rfloor$ vertices. A.a.s. any other vertex is adjacent to at least one of them and not adjacent to at least one of them.
\label{no_sparse}
\end{lemma}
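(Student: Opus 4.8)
The plan is to prove both statements of Lemma~\ref{no_sparse} by a first-moment (union bound) argument, exploiting that $p=n^{-\alpha}$ with $\alpha<1$ makes the expected degree $np = n^{1-\alpha}$ grow polynomially, so that after fixing a set of size roughly $n^\alpha\ln n$ (which has total ``coverage probability'' of order $\ln n$) the probability that a given vertex is missed by all of them is like $e^{-\Theta(\ln n)}=n^{-\Theta(1)}$, small enough to beat the union bound over the at most $n$ vertices and (for the first claim) the at most $n^{2\lfloor n^\alpha\ln\ln n\rfloor}$ choices of the set.

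For the first statement, fix a set $S$ of $s:=2\lfloor n^\alpha\ln\ln n\rfloor$ vertices; for any vertex $v\notin S$, $\mathsf{P}(v\not\sim u\text{ for all }u\in S)=(1-p)^{s}\le e^{-ps}=e^{-2n^\alpha\ln\ln n\cdot n^{-\alpha}(1+o(1))}$, which is $(\ln n)^{-2+o(1)}$. That alone is not summable, so instead I would show the complementary event directly: the probability that \emph{some} set $S$ of size $s$ has the property that \emph{every} vertex outside it has a neighbor in $S$. For a \emph{fixed} $S$, the vertices outside $S$ are covered independently, so $\mathsf{P}(\text{all }v\notin S\text{ have a neighbor in }S)=(1-(1-p)^s)^{n-s}$. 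Hmm — this is close to $1$, so a naive union over all $\binom{n}{s}$ sets fails. The correct reading of the statement is the opposite quantifier order: we want that for every $S$ there \emph{exists} a vertex outside $S$ missing all of $S$, i.e. we must bound $\mathsf{P}(\exists S,\ \forall v\notin S:\ v\text{ has a neighbor in }S)$. Here the cleaner route is: partition $V_n\setminus S$ into $\lfloor (n-s)/s\rfloor$ blocks of size $s$; within each block the events ``$v$ misses $S$'' are independent, so the chance the block is entirely covered is $(1-(1-p)^s)^s\le \exp(-s(1-p)^s)=\exp\big(-s\cdot(\ln n)^{-2+o(1)}\big)=\exp(-n^{\alpha}(\ln n)^{-2+o(1)})$, super-polynomially small; multiply over the $\Theta(n^{1-\alpha})$ blocks and union-bound over the at most $n^{s}=\exp(O(n^\alpha\ln\ln n\cdot\ln n))$ choices of $S$. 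The block bound $\exp(-n^\alpha(\ln n)^{-2+o(1)})$ per block, raised to $\Theta(n^{1-\alpha})$ blocks, gives $\exp(-n(\ln n)^{-2+o(1)})$, which crushes the $\exp(O(n^\alpha(\ln n)^{1+o(1)}))$ from the union bound since $n\gg n^\alpha\ln n$ for $\alpha<1$. Hence a.a.s. every $s$-set is ``escaped'' by some outside vertex.

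For the second statement, fix $2\lfloor n^\alpha\ln n\rfloor$ vertices $u_1,\dots,u_t$ (now $t=2\lfloor n^\alpha\ln n\rfloor$; no union over the choice of the set is needed since it is a single fixed set). For any other vertex $v$, $\mathsf{P}(v\not\sim u_i\ \forall i)=(1-p)^{t}\le e^{-pt}=e^{-2\ln n\,(1+o(1))}=n^{-2+o(1)}$, and symmetrically $\mathsf{P}(v\sim u_i\ \forall i)=p^{t}$ which is astronomically small. Union-bounding over the at most $n$ choices of $v$ gives $n\cdot n^{-2+o(1)}=o(1)$, so a.a.s. every such $v$ has both a neighbor and a non-neighbor among the $u_i$.

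The main obstacle, and the only place requiring care, is the first statement: one must phrase the failure event with the right quantifier order and use the block-partition trick so that the per-block probability $\exp(-n^{\alpha+o(1)})$, raised to $\Theta(n^{1-\alpha})$ blocks, yields $\exp(-n^{1+o(1)})$, which dominates the $\exp(O(n^\alpha(\ln n)^{1+o(1)}))$ coming from the $\binom{n}{s}\le n^s$ choices of $S$ — this inequality is exactly where $\alpha<1$ is used. Everything else is a routine application of $(1-p)^m\le e^{-pm}$ together with $p=n^{-\alpha}$, and both parts are then combined into the single a.a.s. statement of the lemma.
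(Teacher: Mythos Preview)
Your argument is correct in the end, but Part~1 contains a misstep that leads you to an unnecessary detour. You claim that for fixed $S$ the probability
\[
\mathsf{P}\bigl(\text{every }v\notin S\text{ has a neighbor in }S\bigr)=(1-(1-p)^s)^{n-s}
\]
is ``close to $1$'' and hence the naive union bound over $\binom{n}{s}$ sets fails. This is wrong: since $\ln(1-p)\ge -2p$ for small $p$, one has $(1-p)^s\ge e^{-2ps}=e^{-4\ln\ln n}=(\ln n)^{-4}$, so
\[
(1-(1-p)^s)^{n-s}\le \exp\!\bigl(-(n-s)(1-p)^s\bigr)\le \exp\!\bigl(-(n-s)/(\ln n)^4\bigr),
\]
which is already super-polynomially small. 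Multiplying by $n^s=\exp\bigl(O(n^\alpha\ln n\,\ln\ln n)\bigr)$ still gives $o(1)$ because $\alpha<1$. This is exactly what the paper does, in two lines.

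Your ``block partition trick'' is not an error, but it is a disguised recomputation of the same quantity: multiplying the per-block bound $\exp\!\bigl(-s(1-p)^s\bigr)$ over $(n-s)/s$ blocks simply returns $\exp\!\bigl(-(n-s)(1-p)^s\bigr)$. So the trick buys nothing here. Your treatment of Part~2 matches the paper's.
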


\begin{proof} Set $m=2\lfloor n^{\alpha}\ln \ln n\rfloor$. Consider distinct vertices $v_1,\ldots,v_m$. Note that we have $(1-p)^m\ge e^{-2mp}\ge (\ln n)^4$. With probability $$
\left(1-(1-p)^{m}\right)^{n-m}\leq e^{-(n-m)/(\ln n)^4}
$$
each vertex $v\in V_n\setminus\{v_1,\ldots,v_m\}$ is adjacent to some of $v_1,\ldots,v_m$. Therefore, for any $m$ vertices there is a vertex which is not adjacent to any of them with the probability at least $1-n^me^{-(n-m)/(\ln n)^4}=1 +o(1)$.

Set $m=2\lfloor n^{\alpha}\ln n\rfloor$ and choose distinct vertices $v_1,\ldots,v_m$. Any other vertex is adjacent to at least one of $v_1,\ldots,v_m$ and not adjacent to at least one of them with the probability
$$
 \left(1-(1-p)^{m}-p^{m}\right)^{n-m}\geq e^{-(1+o(1))/n} = 1+o(1).
$$\vskip-0.4cm
\end{proof}

\begin{lemma}

Let $\alpha \in [1/3,1)$ and put $p=n^{-\alpha}$. A.a.s. $G(n,p)$ does not contain a complete bipartite graph $K_{2,\lceil n^{1/3}\ln\ln n\rceil}$ as a subgraph.
\label{big_bipartite}
\end{lemma}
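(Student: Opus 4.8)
The plan is a routine first-moment estimate. Note first that $G(n,p)$ contains a (not necessarily induced) copy of $K_{2,t}$, where $t:=\lceil n^{1/3}\ln\ln n\rceil$, if and only if there are two distinct vertices $u,v$ with at least $t$ common neighbours; equivalently, if and only if there is a triple $(\{u,v\},W)$ with $u\neq v$, $W\subset V_n\setminus\{u,v\}$, $|W|=t$, and every vertex of $W$ adjacent to both $u$ and $v$. Each such triple requires exactly $2t$ prescribed edges, present independently each with probability $p$, so by linearity of expectation the expected number $M$ of copies of $K_{2,t}$ in $G(n,p)$ satisfies
\[
M\le\binom{n}{2}\binom{n-2}{t}p^{2t}\le n^2\,\frac{(np^2)^t}{t!}\le n^2\left(\frac{e\,np^2}{t}\right)^t,
\]
where we used $\binom{n-2}{t}\le n^t/t!$ and $t!\ge(t/e)^t$.

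Now I would plug in the hypothesis $\alpha\ge 1/3$: this gives $np^2=n^{1-2\alpha}\le n^{1/3}$, whereas $t\ge n^{1/3}\ln\ln n$, so $\dfrac{e\,np^2}{t}\le\dfrac{e}{\ln\ln n}$ and hence $M\le n^2\bigl(e/\ln\ln n\bigr)^{t}$. Taking logarithms, $\ln M\le 2\ln n-t\bigl(\ln\ln\ln n-1\bigr)$; since $t\ge n^{1/3}$ while $\ln\ln\ln n-1\to\infty$, the subtracted term grows faster than $2\ln n$ and the right-hand side tends to $-\infty$, so $M\to 0$. By Markov's inequality, a.a.s. $G(n,p)$ contains no copy of $K_{2,t}$, which is exactly the assertion of the lemma.

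There is essentially no obstacle in this argument; the only thing to keep track of is that the effective density parameter $np^2$ stays bounded by $n^{1/3}$ uniformly over the whole range $\alpha\in[1/3,1)$, so that it is dominated by the size $t=n^{1/3+o(1)}$ of the forbidden part. Once that is in hand, the factorial $t!$ in the denominator forces the per-pair probability of having $\ge t$ common neighbours to decay like $e^{-\Omega(n^{1/3}\ln\ln n)}$, which easily absorbs the polynomially many (namely $\binom n2$) choices of the two-vertex side.
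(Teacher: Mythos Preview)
Your argument is correct and essentially identical to the paper's own proof: both bound the expected number of copies by $\binom{n}{2}\binom{n-2}{t}p^{2t}$, estimate $\binom{n-2}{t}\le (ne/t)^t$, use $np^2\le n^{1/3}$ and $t\ge n^{1/3}\ln\ln n$ to reduce the base to $e/\ln\ln n$, and conclude via $\ln M\le 2\ln n + t(1-\ln\ln\ln n)\to -\infty$. The only differences are cosmetic (the paper writes the exponent directly rather than taking logs at the end).
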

\begin{proof} Let $m=\lceil n^{1/3}\ln\ln n\rceil$. Such a subgraph exists with a probability at most
$$
 {n\choose 2}{n-2\choose m}p^{2m}\le \frac{n^{m+2} e^m}{m^m}p^{2m}=e^{m(\ln n(1-2\alpha)+1-\ln m)+2\ln n}\leq e^{m(1-\ln\ln\ln n)+2\ln n} = o(1).
$$
\vskip-0.4cm
\end{proof}

\begin{lemma}

Let $\alpha \in (0,1/3)$ and put $p=n^{-\alpha}$. A.a.s. in $G(n,p)$ any two vertices have at least $\lceil n^{1/3}\rceil$ common neighbors.
\label{common_neighbors}
\end{lemma}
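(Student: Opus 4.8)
The statement is a union bound over pairs of vertices combined with a concentration estimate. Fix two distinct vertices $u,v\in V_n$ and let $N=n^{1/3}$ (the target lower bound, rounded up). The number of common neighbors of $u$ and $v$ among the remaining $n-2$ vertices is a sum of independent indicators, so it has a binomial distribution $\mathrm{Bin}(n-2,p^2)$ with mean $\mu=(n-2)p^2=(1+o(1))n^{1-2\alpha}$. Since $\alpha<1/3$ we have $1-2\alpha>1/3$, so $\mu$ is polynomially larger than $N$; more precisely $\mu/N\to\infty$. The plan is therefore: (i) compute the mean; (ii) apply a Chernoff-type lower tail bound to estimate the probability that the common-neighborhood count drops below $\lceil N\rceil$; (iii) take a union bound over all $\binom n2$ pairs.

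For step (ii), the relevant event is that $\mathrm{Bin}(n-2,p^2)$ is at most $\lceil n^{1/3}\rceil \le \mu/2$ for $n$ large, so the standard Chernoff bound gives
\[
{\sf P}\bigl(\mathrm{Bin}(n-2,p^2)\le \mu/2\bigr)\le e^{-\mu/8}=e^{-\Omega(n^{1-2\alpha})}.
\]
Multiplying by $\binom n2\le n^2$ and using $1-2\alpha>0$ shows the total failure probability is $n^2 e^{-\Omega(n^{1-2\alpha})}=o(1)$ as $n\to\infty$. Hence a.a.s. every pair of vertices has at least $\lceil n^{1/3}\rceil$ common neighbors, which is exactly the claim.

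The only mild subtlety — and the one place to be slightly careful rather than routine — is that one should phrase the Chernoff bound for a fixed pair first and only then union over pairs, since the events are not independent; but the union bound absorbs this trivially because the per-pair failure probability is super-polynomially small while there are only polynomially many pairs. There is no real obstacle: the exponent $1-2\alpha$ is bounded away from $0$ throughout the range $\alpha\in(0,1/3)$, so the bound is robust, and the constants in the Chernoff estimate are irrelevant. (One may alternatively invoke Theorem~\ref{threshold} in spirit — $K_{2,\lceil n^{1/3}\rceil}$ has $\rho^{\max}$ bounded below $1/\alpha$ when $\alpha<1/3$ — but since we need the statement \emph{uniformly over all pairs} $u,v$, the direct Chernoff-plus-union-bound argument above is cleaner and is what I would write out.)
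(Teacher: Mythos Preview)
Your proposal is correct and follows essentially the same approach as the paper: identify the number of common neighbors of a fixed pair as $\mathrm{Bin}(n-2,p^2)$, apply a Chernoff lower-tail bound (the paper writes it in the form $n^2 e^{-((n-2)p^2-n^{1/3})^2/(2(n-2)p^2)}$, you use the equivalent $e^{-\mu/8}$ version), and finish with a union bound over the $\binom{n}{2}$ pairs. The two arguments are interchangeable; nothing further is needed.
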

\begin{proof} For fixed two vertices $u,v$, denote by $X_i(u,v)$ the indicator of the event $i\sim u$, $i\sim v$. From the Chernoff bound,
$$
 {\sf P}\left(\exists u,v\,\left(\sum_{i=1}^n X_i(u,v)<n^{1/3}\right)\right)\leq n^2 e^{-\frac{\left((n-2)p^2-n^{1/3}\right)^2}{2(n-2)p^2}} = o(1).
$$
\vskip-0.4cm
\end{proof}

We go on to the analysis of types 3 -- 6. If $\alpha<4/5$, then by Theorem~\ref{threshold} a.a.s. $G(n,p)$ contains an induced subgraph on $4$ vertices and $5$ edges: the vertices $v_1,\ldots,v_4$, and all pairs of vertices are adjacent except $v_1,v_2$. Put $X:=\{v_1,v_2,v_3\}$. It has a dominating vertex $v_4$. By Lemma~\ref{extension}, a.a.s. $X$ has type 3.

Any dense, but not complete, set $X$ has cardinality at least 3. Moreover, if it has an $\bar X$-outside-dominating vertex $v$, then the induced graph on $X\cup\{v\}$ has density at least $5/4$. Therefore, if $\alpha>4/5$, then by Theorem~\ref{threshold} a.a.s. in $V_n$ there is no such $X$, and, consequently, no pair of type 3.

Let $s=\lfloor 1/\alpha\rfloor$. For any $\alpha\in(0,1)$, by Lemma~\ref{maximal_extension}
 a.a.s. in $G(n,p)$ there is $X\subset V_n$ with $|X|=s+1,$ such that $G(n,p)|_X$ is a star with no  $\bar{X}$-outside-dense vertices. Lemma~\ref{extension} guarantees that it has an $\bar X$-outside-sparse vertex, and so $X$ is of type 4.

If $\alpha<1/2$, then by Lemma~\ref{degrees} a.a.s. for $v\in V_n$ we have $|N(v)|>2\sqrt n$. Choose $X=\{v\}\cup X_0$, where $X_0\subset N(v)$, $|X| = \lfloor \sqrt n\rfloor$. By Lemma~\ref{no_sparse}, a.a.s. any vertex of $\bar{X}$ is $\bar{X}$-outside-common, so $X$ is a.a.s. of type 6.

By Lemmas~\ref{no_sparse},~\ref{big_bipartite}, if $\alpha\ge 1/3$, then in $G(n,p)$ there are no subsets of type 5 (cf. Table \ref{Table}).  Let $\alpha<1/3$. Fix an ordering on the set of pairs of vertices of $V_n$ and take the first edge $uv$ of $G(n,p)$ in this ordering. By Lemma~\ref{common_neighbors}, a.a.s. $u$ and $v$ share at least $n^{1/3}$ neighbors. Denote $X:=\{u\}\cup \big(N(u)\cap N(v)\big)$. By Lemma~\ref{no_sparse}, a.a.s., all vertices in $\bar X\setminus\{v\}$ are $\bar X$-outside-common. Moreover, by Lemma \ref{no_dense} neither $v$ nor $u$ is $X$-special. Therefore, a.a.s. $X$ has type 5.\\


{\bf Types 7 -- 12.}

This is the most difficult situation. For these cases we need the following auxiliary lemmas.

\begin{lemma}
Consider the event $A$ ``there exists an independent set $X$ such that there is an $\bar{X}$-outside-dominating vertex but there are no $\bar{X}$-outside-isolated vertices''. If $\alpha\in  [1/3,1)$, then a.a.s. $A$ does not hold. If $\alpha\in (0,1/3)$, then a.a.s. $A$ holds.
\label{no_existence}
\end{lemma}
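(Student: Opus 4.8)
The plan is to prove both directions of Lemma~\ref{no_existence} by controlling the interplay between the size of an independent set $X$ and the degrees of the vertices of $\bar X$. Recall the event $A$ asks for an independent $X$ with an $\bar X$-outside-dominating vertex $v$ (so $v$ is adjacent to every vertex of $X$) but no $\bar X$-outside-isolated vertex (so every $w\in\bar X$ has a neighbor in $X$). I would first observe that $|X|$ is essentially forced: an independent set has size $O(n^\alpha\log n)$ by Lemma~\ref{sizes}(4), while the existence of a vertex $v$ adjacent to all of $X$ requires $|X|\le \deg(v)=O(n^{1-\alpha})$ by Lemma~\ref{degrees}; more importantly, if \emph{no} vertex of $\bar X$ is outside-isolated, then every vertex outside $X$ has a neighbor inside $X$, and since $X$ is independent the neighborhoods of $X$-vertices must cover $\bar X$, which by a union bound / second-moment estimate forces $|X|=\Omega(n^\alpha\log n)$ (cf.\ Lemma~\ref{no_sparse}: fewer than $2\lfloor n^\alpha\ln\ln n\rfloor$ vertices a.a.s.\ leave some vertex non-adjacent to all of them). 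So on the event $A$ we a.a.s.\ have $|X|=\Theta(n^\alpha\log n)$.

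For the negative direction ($\alpha\in[1/3,1)$), the point is that once $|X|=\Omega(n^\alpha\log n)$, a single vertex $v$ adjacent to \emph{all} of $X$ is too much to ask. The expected number of pairs $(v,X)$ with $X$ independent of size $m\approx n^\alpha\ln n$ and $v$ adjacent to all of $X$ is at most $n\binom{n}{m}(1-p)^{\binom m2}p^m$; the factor $(1-p)^{\binom m2}=e^{-\Theta(p m^2)}=e^{-\Theta(n^\alpha(\log n)^2)}$ already kills everything, since $\binom nm\le n^m=e^{m\ln n}=e^{O(n^\alpha(\log n)^2)}$ but with a smaller constant in the exponent (independence of a set of size $\Omega(n^\alpha\log n)$ is itself a rare event whose probability is the square-root-of-the-independence-number regime). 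Concretely I would fix $m=\lceil n^\alpha\ln n\rceil$ (or the minimal forced size), write $\sum_{m}n\binom nm(1-p)^{\binom m2}p^m$, and check the exponent $m\ln n - \binom m2 p(1+o(1)) + m\ln p$ is $\to-\infty$; the constraint $\alpha\ge 1/3$ will enter when I instead need to rule out the bipartite-type obstruction (as in Lemma~\ref{big_bipartite}), i.e.\ that two vertices of $\bar X$ cannot both dominate a large independent set — this is exactly where $K_{2,\lceil n^{1/3}\ln\ln n\rceil}$-freeness is used. I would combine the counting bound with Lemmas~\ref{sizes},~\ref{no_sparse},~\ref{big_bipartite} to conclude that for $\alpha\ge 1/3$ no such $X$ exists a.a.s.

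For the positive direction ($\alpha\in(0,1/3)$), I would construct $X$ explicitly. Take any edge, or rather: by Lemma~\ref{common_neighbors}, a.a.s.\ any two vertices $u,v$ have at least $\lceil n^{1/3}\rceil$ common neighbors, and more to the point any vertex $v$ has $\deg(v)=(1+o(1))n^{1-\alpha}$ by Lemma~\ref{degrees}. Pick $v$ arbitrarily and choose $X$ to be a maximal independent subset of $N(v)$; then $v$ is $\bar X$-outside-dominating by construction. The remaining job is to show a.a.s.\ \emph{every} vertex $w\notin X$ has a neighbor in $X$. For $w\in N(v)$ this follows from maximality of $X$; for $w\notin N(v)\cup X$ I would use that $|X|=\Omega(n^{1-\alpha})$ (a maximal independent set inside a set of size $\sim n^{1-\alpha}$ in $G(n,p)$ has size $\Omega(n^{1-\alpha}/(n^{(1-\alpha)\alpha}\log n))$… here I should be careful: the induced subgraph on $N(v)$ has density about $n^{1-\alpha}\cdot p = n^{1-2\alpha}$, so for $\alpha<1/3$ its independence number is still polynomially large, of order $n^{\alpha}\log n$ or larger), and then a union bound over $w$: the probability that a fixed $w$ misses all of $X$ is $(1-p)^{|X|}\le e^{-p|X|}$, and with $|X|$ at least of order $n^\alpha\log n$ this is $n^{-\omega(1)}$, so summing over $n$ choices of $w$ and (if needed) over the choice of $X$ gives $o(1)$. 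Thus a.a.s.\ such $X$ exists.

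I expect the main obstacle to be pinning down the precise size of $X$ forced on the event $A$ and matching it against both the ``independent set of that size is rare'' bound and the ``dominating vertex for a set of that size is rare'' bound, so that the two exponents genuinely have the right sign for all $\alpha$ in the claimed ranges — in particular, verifying that the threshold falls exactly at $\alpha=1/3$ and not elsewhere, which is why the $K_{2,m}$-freeness lemma (Lemma~\ref{big_bipartite}) with its $n^{1/3}$ appears. The rest is union bounds and the already-established concentration lemmas.
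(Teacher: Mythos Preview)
Your outline has the right scaffolding (force $|X|=\Theta(n^{\alpha}\ln n)$, then count), but both directions contain a real gap.

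\textbf{Negative direction.} You want to drop the ``no $\bar X$-outside-isolated vertex'' constraint after using Lemma~\ref{no_sparse} to force $|X|\ge 2n^{\alpha}\ln\ln n$, and then bound $\sum_k n\binom{n}{k}(1-p)^{\binom{k}{2}}p^k$. This does not vanish over the whole allowed range of $k$. At $k=2n^{\alpha}\ln\ln n$ the exponent is, up to lower order terms,
\[
k\bigl[(1-\alpha)\ln n-\ln k\bigr]-k\alpha\ln n-\tfrac12 k^2p
\;=\;k\bigl[(1-2\alpha)\ln n-\alpha\ln n+O(\ln\ln n)\bigr]\ \text{?}
\]
more precisely $k[(1-2\alpha)\ln n-\ln\ln n+O(\ln\ln\ln n)]$, which is \emph{positive} for any $\alpha<1/2$. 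The paper's proof keeps the factor $(1-(1-p)^k)^{n-k-1}\approx e^{-n^{1-x}}$ (writing $k=xn^{\alpha}\ln n$) in the expectation; that term is what kills the contribution from small $k$, while the independence/domination factors handle large $k$. The two regimes must be analyzed together, and the threshold $\alpha=1/3$ emerges from the exponent $(1-2\alpha-x/2)$ evaluated near $x=1-\alpha$, not from $K_{2,m}$-freeness. Lemma~\ref{big_bipartite} plays no role here.

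\textbf{Positive direction.} Your explicit construction --- $X$ a maximal (or maximum) independent subset of $N(v)$ --- gives $|X|\le \alpha(G|_{N(v)})\sim 2(1-2\alpha)n^{\alpha}\ln n$, so $p|X|\le (2(1-2\alpha)+o(1))\ln n$. Hence $(1-p)^{|X|}=n^{-2(1-2\alpha)+o(1)}$, which is polynomially small, not $n^{-\omega(1)}$ as you claim. The union bound over $w\notin N(v)$ then gives $n\cdot n^{-2(1-2\alpha)}=n^{4\alpha-1}$, which diverges for $\alpha\in[1/4,1/3)$; indeed in that range there \emph{are} vertices with no neighbour in $X$, and the construction fails. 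Since $X\subset N(v)$ is forced once you fix $v$, no choice of maximal independent set rescues this. The paper instead runs a second-moment argument on the count $A_k$ of pairs $(v,X)$ at $k=(1-\alpha)n^{\alpha}\ln n$ with all three constraints built in: one checks $\mathsf{E}A_k\to\infty$ and $\mathsf{E}A_k^2\le(1+o(1))(\mathsf{E}A_k)^2$ via a careful estimate of the overlap terms. That variance computation is the heart of the argument and cannot be replaced by a direct construction of this type.
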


\begin{proof} Let $A_k$ be the number of pairs $(v,X)$, where $X$ is an independent set of size $k$, such that there are no $\bar{X}$-outside-isolated vertices, and $v\in \bar X$ is $X$-dominating. By Lemma~\ref{no_sparse}, it is sufficient to restrict our attention to the case  $k\in I:= [2 n^{\alpha}\ln\ln n, 2n^{\alpha}\ln n]$. For each $k$ we have
\begin{equation}\label{eq1}{\sf E} A_k =  {n\choose k}(n-k) p^{k}(1-p)^{{k\choose 2}}\big(1-(1-p)^k\big)^{n-k-1}.\end{equation}
Let us first prove that $\sum_{k\in I} {\sf E} A_k\to 0$ if $\alpha\ge \frac 1 3$. This will obviously imply the first part of the lemma. Choose $\alpha\ge \frac 13$. Assuming that $k\in I$ and putting $k = n^{\alpha}x\ln n$, we get
$${\sf E} A_k\le \Big(\frac{ne}k\Big)^k n p^k e^{-p{k\choose 2}}\big(1-e^{-pk-p^2k}\big)^{n-k-1} = e^{k(\ln n-\ln k-\ln p -\frac 12pk)-(1+o(1))ne^{-pk}+O(k)}\leq$$
$$e^{k(1-2\alpha-\frac x2)\log n-k\ln\ln n-k\ln x- (1+o(1))n^{1-x}+O(k)}:=f(k).$$
If for some constant $c$ we have $1-x\ge c>\alpha$,  then, obviously, $f(k) = o(1/n)$. Therefore, we may assume that $x\ge 1-\alpha +o(1)$. In that case we have $1-2\alpha-\frac x2 \le \frac 12 -\frac 32\alpha+o(1)$, and thus $f(k) = o(1/n)$ if $\alpha >\frac 13$. If $\alpha=\frac 13$, then $1-2\alpha-\frac x2  = \frac 13-\frac x2$, so $f(k)=o(1/n)$ if $x\ge \frac 23$. Therefore, we assume that $\alpha = \frac 13$ and $x = \frac 23+o(1).$

If $x\le \frac 23-2\frac {\ln\ln n}{\ln n}$, then
$$
k\left(\frac 13-\frac x2\right)\ln n- (1+o(1))n^{1-x}=n^{1/3}x\left(\frac 13-\frac x2\right)(\ln n)^2-(1+o(1))n^{1/3}(\ln n)^2< 0,$$
so in this case $f(k)\le e^{-k\ln\ln\ln n+O(k)} = o(1/n)$.

If $\frac 23-\frac 32\frac {\ln\ln n}{\ln n}\ge x> \frac 23-2\frac {\ln\ln n}{\ln n}$, then $k\big(\frac 13-\frac x2\big)\ln n< k\ln\ln n,$  so $f(k)\le e^{-(1+o(1))n^{1-x}+O(k)} = o(1/n)$, since $k = o(n^{1-x})$.

If $x>\frac{2}{3}-\frac 32\frac {\ln\ln n}{\ln n},$ then $k\big(\frac 13-\frac x2\big)\ln n< \frac 34k\ln\ln n,$ so $f(k)\le e^{-\frac 14k\log\log n+O(k)}=o(1/n)$. Summing over $k\in I$ the bounds that we obtained, we get that $\sum_{k\in I} {\sf E} A_k =o(1)$ for $\alpha\ge 1/3$.\\

Let $\alpha<1/3.$ We prove that ${\sf P}(A_k>0)\rightarrow 1$ as $n\to\infty$, where $k=(1-\alpha)n^{\alpha}\ln n$. This follows from Chebyshev's inequality, since ${\sf E} A_k\to\infty$ as $n\to\infty$ and ${\sf D} A_k=o(({\sf E} A_k)^2)$. The expectation is calculated in (\ref{eq1}). Moreover, the upper bound on ${\sf E} A_k$, proved after (\ref{eq1}), is also a lower bound, if one adds a factor $(1+o(1))$ into the exponent (note that $x=1-\alpha$ in our case):
$$
{\sf E} A_k\ge e^{(1+o(1))\big(k(1-2\alpha-\frac {1-\alpha}2)\ln n-k\ln\ln n- (1+o(1))n^{\alpha}+O(k)\big)} =  e^{(\frac 12+o(1))(1-3\alpha)k\ln n}.
$$
This expression obviously tends to infinity. Next, we show that ${\sf E} A_k^2\le (1+o(1))({\sf E} A_k)^2$. We have
$${\sf E} A_k^2\le \sum_{j=0}^k{n\choose k}{n-k\choose k-j}{k\choose j}(n-k)^2p^{2k-j}(1-p)^{2{k\choose 2}-{j\choose 2}}\big(1-2(1-p)^k+(1-p)^{2k-j}\big)^{n-2k-2}.$$
Let us denote the $j$-th summand of the above expression by $g(j)$.
It is easy to see that $g(0) = (1+o(1))({\sf E}A_k)^2$. Indeed,
$$
\frac {g(0)}{({\sf E} A_k)^2}\le (1-(1-p)^k)^{-4k-4} \le \exp \big(O(k)e^{-pk(1+o(1))}\big)=\exp\big(n^{-(1+o(1))(1-2\alpha)}\big)=1+o(1).
$$
Thus, it is sufficient for us to show that for each $j\ge 1$ the expression $g(j)$ is much smaller than $g(0)$.\\

Let us first consider the case $j\le \alpha k$. Then
$$\frac{\big(1-2(1-p)^k+(1-p)^{2k-j}\big)^{n-2k-2}}{\big(1-2(1-p)^k+(1-p)^{2k}\big)^{n-2k-2}}\le \big(1+(1+o(1))(1-p)^{2k-j}\big)^{n}\le \big(1+(1+o(1))e^{-(2-\alpha)kp}\big)^{n}.$$
We have $(2-\alpha)kp = (2-\alpha)(1-\alpha) \ln n>c\log n,$ where $c>1$. Consequently, for $n$ large enough, the last expression in the displayed formula is at most $\big(1+\frac 1{n^c}\big)^n = 1+o(1)$. Therefore, in this case,
$$
\frac{g(j)}{g(0)} \sim \frac{{n-k\choose k-j}{k\choose j}}{{n\choose k}p^j(1-p)^{{j\choose 2}}}=O\Big(\frac{k^j \big(\frac {ke}j\big)^j}{n^j p^{j}e^{-(\frac 12+o(1))pj^2}}\Big)\leq\frac{1}{n^{(1-3\alpha+o(1))j}j^je^{-(\frac 12+o(1))pj^2}}.
$$
Since $j\le \alpha k=\alpha(1-\alpha)n^{\alpha}\ln n$, we have $j^je^{-(\frac 12+o(1))pj^2}=e^{j(\log j-(\frac 12+o(1))pj)} = \Omega(1)$. Therefore, for each $j\le \alpha k$, we have $\frac{g(j)}{g(0)}\leq n^{-(1-3\alpha+o(1))j}$.\\

Next, consider the case $j\ge \alpha k$. In this case, we use the following crude estimate:
$$\frac{\big(1-2(1-p)^k+(1-p)^{2k-j}\big)^{n-2k-2}}{\big(1-2(1-p)^k+(1-p)^{2k}\big)^{n-2k-2}}\le \big(1+(1+o(1))(1-p)^{k}\big)^{n}\le e^{(1+o(1))ne^{-kp}}=e^{(1+o(1))n^{\alpha}}.$$
Remark that for $j\ge \alpha k$ we have $e^{(1+o(1))n^{\alpha}} = n^{o(j)}$, $j^j = n^{(1+o(1))\alpha j}$. Finally, we have
$$\frac{g(j)}{g(0)} =O\Bigg(\frac{k^j \big(\frac {ke}j\big)^j e^{(1+o(1))n^{\alpha}}}{n^j p^{j}e^{-(\frac 12+o(1))pj^2}}\Bigg)=\frac{1}{n^{(1-2\alpha+o(1))j} e^{-(\frac 12+o(1))pj^2}}.$$
Since $j\le k$, we have
$$
e^{-(\frac 12+o(1))pj^2}\ge e^{-(\frac 12+o(1))(1-\alpha)j\ln n} = n^{-\big(\frac 12-\frac{\alpha}2+o(1)\big)j}.
$$
Therefore, $\frac{g(j)}{g(0)} = n^{-\big(\frac 12-\frac{3\alpha}2+o(1)\big)j}$ for each $j\ge \alpha k$. We conclude that there exists $\delta>0$ such that $\sum_{j=1}^k g(j)\leq\sum_{j=1}^k n^{-\delta j}g(0)=o(g(0))$, and so ${\sf E} A_k^2\le (1+o(1))({\sf E} A_k)^2$.
\end{proof}

\begin{lemma}
If $\alpha\in (0, 2/3)$, then a.a.s. there is no vertex $v$ with $N(v)$ being an independent set. If $\alpha\in [2/3,1)$, then a.a.s. there is such a vertex $v$.
\label{independent_neighbors}
\end{lemma}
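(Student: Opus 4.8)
The plan is to study $X := \#\{v \in V_n : N(v) \text{ induces an empty graph in } G(n,p)\}$, showing that ${\sf E}X \to 0$ when $\alpha < 2/3$ (so that $X=0$ a.a.s.\ by Markov's inequality) and that $X \ge 1$ a.a.s.\ when $\alpha \ge 2/3$ by the second moment method. The starting point is the identity ${\sf P}(N(v)\text{ independent}) = {\sf E}\bigl[(1-p)^{\binom{\deg v}{2}}\bigr]$, which holds because, conditionally on the set $N(v)$, the edges inside $N(v)$ are still independent $\mathrm{Bernoulli}(p)$ variables; combined with the concentration of $\deg v$ around $n^{1-\alpha}$ (Lemma~\ref{degrees}), this probability is of order $e^{-\Theta(n^2p^3)} = e^{-\Theta(n^{2-3\alpha})}$, which is exactly what produces the threshold at $\alpha = 2/3$.

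For $\alpha \in (0, 2/3)$ I would use only the first moment. A Chernoff bound gives ${\sf P}(\deg v \le n^{1-\alpha}/2) \le e^{-\Omega(n^{1-\alpha})}$ for each $v$, and on the complementary event $(1-p)^{\binom{\deg v}{2}} \le e^{-\Omega(n^{2-3\alpha})}$; hence ${\sf P}(N(v)\text{ independent}) \le e^{-\Omega(n^{2-3\alpha})} + e^{-\Omega(n^{1-\alpha})}$. Since $\alpha < 2/3 < 1$, both exponents are positive powers of $n$, so $n$ times this bound still tends to $0$; summing over $v$ gives ${\sf E}X = o(1)$.

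For $\alpha \in [2/3, 1)$ I would apply the second moment method to $X$. Here $n^{2-3\alpha} \le 1$, so the same estimate shows ${\sf P}(N(v)\text{ independent})$ is bounded away from $0$, whence ${\sf E}X = \Theta(n)$. Writing $Y_v$ for the indicator of $\{N(v)\text{ independent}\}$, the diagonal contribution $\sum_v {\sf D}(Y_v)$ to ${\sf D}X$ is at most ${\sf E}X = O(n) = o(n^2)$, so the crux is bounding $\mathrm{Cov}(Y_u, Y_v)$ for $u \ne v$. Let $E_{u,v}$ be the event ``$u \not\sim v$ and $N(u) \cap N(v) = \varnothing$''; one has ${\sf P}(\overline{E_{u,v}}) \le p + np^2 = O(n^{1-2\alpha}) = o(1)$ precisely because $\alpha \ge 2/3$. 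Conditionally on $N(u)$ and $N(v)$, the event $E_{u,v}$ forces the edges inside $N(u)$ and those inside $N(v)$ to run over disjoint pairs, and a term-by-term comparison of the two expansions will yield ${\sf E}\bigl[Y_u Y_v \mathbf{1}_{E_{u,v}}\bigr] \le (1-p)^{-1}\,{\sf E}Y_u\,{\sf E}Y_v$. Consequently $\mathrm{Cov}(Y_u, Y_v) \le O(p) + {\sf P}(\overline{E_{u,v}}) = o(1)$ uniformly over pairs $u\ne v$, so $\sum_{u\ne v}\mathrm{Cov}(Y_u,Y_v) = o(n^2) = o(({\sf E}X)^2)$, and Chebyshev's inequality finishes the proof.

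The main obstacle is the endpoint $\alpha = 2/3$: there ${\sf P}(N(v)\text{ independent})$ is a genuine constant in $(0,1)$, so one cannot simply exhibit a single good vertex as for $\alpha > 2/3$, and the second moment argument is unavoidable. Within that argument the delicate point is the positive correlation between $Y_u$ and $Y_v$, which is tamed by the (easy but crucial) observation that for $\alpha \ge 2/3$ two distinct vertices a.a.s.\ have disjoint neighbourhoods, thereby decoupling the two events up to a $(1+o(1))$ factor.
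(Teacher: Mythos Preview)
Your proposal is correct; the first-moment half is essentially the paper's argument in cleaner clothing, but for $\alpha\in[2/3,1)$ you and the paper part ways.

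The paper does not run a second moment argument on $X=\sum_v Y_v$. Instead, it first shows (as you do) that $\gamma:={\sf P}(N(v)\text{ independent})$ is bounded away from $0$ for $\alpha\ge 2/3$, and then invokes Lemma~\ref{no_neighb_ext} to produce, for any fixed $k$, vertices $v_1,\ldots,v_k$ that are pairwise non-adjacent and have pairwise disjoint neighbourhoods. For such vertices the events $\{N(v_i)\text{ independent}\}$ are mutually independent, so the probability that at least one holds is at least $1-(1-\gamma)^k$; letting $k\to\infty$ finishes. Your route computes $\mathrm{Cov}(Y_u,Y_v)$ directly and uses Chebyshev. The trade-off is clear: the paper's proof is shorter because the ``disjoint neighbourhoods'' lemma has already been established for other purposes, so no variance calculation is needed; your proof is self-contained and does not rely on that external lemma, at the price of the covariance estimate (which, as you note, hinges on the same phenomenon---that two vertices typically have disjoint neighbourhoods when $\alpha>1/2$). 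Your key inequality ${\sf E}[Y_uY_v\mathbf{1}_{E_{u,v}}]\le(1-p)^{-1}{\sf E}Y_u\,{\sf E}Y_v$ is correct: on $E_{u,v}$ one has ${\sf E}[Y_uY_v\mid N(u),N(v)]=(1-p)^{\binom{|N(u)|}{2}+\binom{|N(v)|}{2}}$, and since $|N(u)\setminus\{v\}|$ and $|N(v)\setminus\{u\}|$ are independent $\mathrm{Bin}(n-2,p)$ variables, dropping the indicator $\mathbf{1}_{E_{u,v}}$ and comparing with ${\sf E}Y_u\ge(1-p){\sf E}\bigl[(1-p)^{\binom{|N(u)\setminus\{v\}|}{2}}\bigr]$ gives exactly the stated factor. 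One small remark: your parenthetical ``precisely because $\alpha\ge 2/3$'' is slightly off---${\sf P}(\overline{E_{u,v}})=O(n^{1-2\alpha})=o(1)$ already for $\alpha>1/2$---but this does not affect the argument.
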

\begin{proof} Let $\alpha<2/3$. Fix a vertex $v\in V$. Let $X$ be a set of neighbors of $v$ in $G(n,p)$. The probability $\gamma=\gamma(X)$ of $X$ being an independent set equals
$$
 \sum_{k=0}^{n-1} {n-1\choose k} p^k (1-p)^{n-1-k+{k\choose 2}}\leq \sum_{k=0}^{n-1}\Big(\frac{enp(1-p)^{\frac nk}}k\Big)^k(1-p)^{\frac {k^2}2-\frac{3k}{2}-1}\le
$$
$$
\sum_{k=0}^{n-1}\Big(\frac{enp}ke^{-\frac {np}k}\Big)^k(1-p)^{\frac {k^2}2-\frac{3k}{2}-1}\le \sum_{k=0}^{\lceil p^{-1/2}\log ( 1/p)\rceil}\Big(\frac{enp}ke^{-\frac {np}k}\Big)^k+\sum_{k=\lceil (p^{-1/2}\log (1/p)\rceil}^{n-1}(1-p)^{\frac {k^2}2-\frac{3k}{2}-1}.
$$
In the last transition we used the fact that $ex\le e^x$ for any $x$. We also have $ex\le e^{(\frac 12+o(1))x}$, and thus the first sum in the last displayed expression is bounded by $ne^{-\Omega(np^{3/2}/\ln (1/p)})\le n e^{-n^{\beta}}$ for some $\beta>0$, since $\alpha<2/3$. This expression is obviously $o(1/n)$. The second sum in the last displayed equation is $O\big(ne^{-p\big(p^{-1}(\ln (1/p))^2\big)}\big) = o(1/n)$, since $1/p = n^{\alpha}$ for some $\alpha>0$. Therefore, a.a.s. there is no vertex $v$ in $V_n$ with $N(v)$ being an independent set.\\

If $\alpha>2/3$, then
$$
\gamma\geq\sum_{k=\lfloor 1/2 n^{1-\alpha}\rfloor}^{\lfloor 3/2 n^{1-\alpha}\rfloor}{n-1\choose k} p^k (1-p)^{n-1-k+{k\choose 2}}=
(1+o(1))\sum_{k=\lfloor 1/2 n^{1-\alpha}\rfloor}^{\lfloor 3/2 n^{1-\alpha}\rfloor}{n\choose k} p^k (1-p)^{n-k}=1+o(1).
$$
If $\alpha=2/3$, then
$$
\gamma\geq (1+o(1))\sum_{k=\lfloor 1/2 n^{1/3}\rfloor}^{\lfloor 3/2 n^{1/3}\rfloor}{n\choose k} p^k (1-p)^{n-k+k^2/2}=(1-p)^{9/4 n^{2/3}}(1+o(1))\rightarrow e^{-9/4},\quad n\rightarrow\infty.
$$

Lemma \ref{no_neighb_ext} implies that for each $k=2,3,\ldots$ we have a set of vertices $v_1,\ldots, v_k$ such that $v_i$'s form and independent set and, moreover, $N(v_i)\cap N(v_j)=\varnothing$ for each $1\le i<j\le k$. The event ``$N(v_i)$ is an independent set'' is independent of events ``$N(v_j)$ is an independent set'' for $j\ne i$. Therefore, for any $\varepsilon>0$ there exists $k$ such that
$$
 {\sf P}(\exists m\in\{1,\ldots,k\}\,X_m\text{ is an independent set})\geq (1-\varepsilon)+o(1).
$$
This means that a.a.s. in $G(n,p)$ there exists $v$ such that $N(v)$ is an independent set. \end{proof}

\begin{lemma}
Let $\alpha \in (0,1)$ and put $p=n^{-\alpha}$. A.a.s. in $G(n,p)$ there exists an independent set $X$ such that $\bar{X}$ is a common set, and all its vertices are $\bar{X}$-outside-common.
\label{existence}
\end{lemma}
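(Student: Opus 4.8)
The plan is to construct $X$ explicitly as a large independent set plus a small correction, revealing the edges of $G(n,n^{-\alpha})$ in two stages. Split $V_n=V_1\cup V_2$ into disjoint parts with $|V_2|=\lceil n^{(1+\alpha)/2}\rceil$, so that $|V_2|=o(n)$, $|V_2|n^{-\alpha}\to\infty$, and $|V_1|\sim n$. First reveal $G|_{V_1}$ and take $I$ to be a maximum independent set of $G|_{V_1}$; being maximum, $I$ is maximal in $G|_{V_1}$, hence every vertex of $V_1$ has a neighbour in $I$, and, since $G|_{V_1}$ is a copy of $G(|V_1|,n^{-\alpha})$ with $|V_1|\sim n$, by Lemma~\ref{ind_set} a.a.s.\ $|I|=(2(1-\alpha)+o(1))n^{\alpha}\ln n$. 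Now reveal the remaining edges, set $W:=\{w\in V_2:\ N(w)\cap I=\varnothing\}$, let $W'$ be a maximal independent subset of $G|_{W}$, and put $X:=I\cup W'$. Because no vertex of $W$ has a neighbour in $I$, there are no edges between $I$ and $W'$, so $X$ is independent; and $W'\subset V_2$ is independent, so $|W'|\le\alpha(G|_{V_2})=O(n^{\alpha}\ln n)$ and $|X|=O(n^{\alpha}\ln n)=o(n)$. In particular $X$ and $\bar X$ are both nondegenerate.

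It then remains to verify the four properties defining the type of the pair $X,\bar X$. \emph{No vertex of $\bar X$ is $\bar X$-outside-isolated (i.e.\ $X$ is dominating):} a vertex of $V_1\setminus I$ has a neighbour in $I$ by maximality of $I$ in $G|_{V_1}$; a vertex of $V_2\setminus W$ has a neighbour in $I$ by the definition of $W$; a vertex of $W\setminus W'$ has a neighbour in $W'$ by maximality of $W'$ in $G|_{W}$. \emph{No vertex of $\bar X$ is $\bar X$-inside-dominating:} such a vertex would have degree $\ge|\bar X|-1=n(1-o(1))$, impossible since $\Delta(G)\le(1+o(1))n^{1-\alpha}$ by Lemma~\ref{degrees}. \emph{No vertex of $\bar X$ is $\bar X$-outside-dominating:} if $v\in\bar X$ were adjacent to all of $X$ then $X$ would be an independent subset of $G|_{N(v)}$, so $|X|\le\alpha(G|_{N(v)})$; but a first-moment union bound over all $v$, together with $\deg v\le(1+o(1))n^{1-\alpha}$ (Lemma~\ref{degrees}), gives a.a.s.\ $\alpha(G|_{N(v)})\le(c_\alpha+\varepsilon)n^{\alpha}\ln n$ for every $v$, where $c_\alpha=\max\{2(1-2\alpha),0\}<2(1-\alpha)$; choosing $\varepsilon$ small enough this contradicts $|X|\ge|I|=(2(1-\alpha)+o(1))n^{\alpha}\ln n$.

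The remaining and most delicate property is \emph{no vertex of $\bar X$ is $\bar X$-inside-isolated}, i.e.\ every $v\in\bar X$ has a neighbour in $\bar X$. Here one first notes that a.a.s.\ $|W|=o(|V_2|)$, by Markov's inequality, since for a typical $I$ one has ${\sf E}[|W|\mid I]=|V_2|(1-n^{-\alpha})^{|I|}=|V_2|\,n^{-2(1-\alpha)+o(1)}=o(|V_2|)$; hence a.a.s.\ $|V_2\setminus W|\ge\tfrac12|V_2|$ and, conditionally on $W$, $G|_{V_2\setminus W}$ is a copy of $G(|V_2\setminus W|,n^{-\alpha})$, which a.a.s.\ has no isolated vertex (as $|V_2\setminus W|\,n^{-\alpha}\to\infty$). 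Therefore a vertex of $V_2\setminus W$ has a neighbour in $V_2\setminus W\subset\bar X$; a vertex of $V_1\setminus I$ a.a.s.\ has a neighbour in $V_2\setminus W\subset\bar X$, because (being outside $I$) its edges to $V_2$ are independent of $W$ and $(1-n^{-\alpha})^{|V_2\setminus W|}$ is far below $1/n$; and a vertex $w$ of $W\setminus W'$, having no neighbour in $I$, satisfies ${\sf P}(w\text{ has no neighbour in }V_1\setminus I)=(1-n^{-\alpha})^{|V_1|}\le e^{-(1-o(1))n^{1-\alpha}}$, so a.a.s.\ it has a neighbour in $V_1\setminus I\subset\bar X$. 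Together, the four properties say precisely that $\bar X$ is common and all its vertices are $\bar X$-outside-common.

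The only genuinely hard point is this last property. It demands that $X$, while being independent and dominating, contain the \emph{entire} neighbourhood of no vertex outside it; and for $\alpha\ge 2/3$ Lemma~\ref{independent_neighbors} no longer forbids vertices with independent neighbourhoods, so a careless choice of $X$ (say, an arbitrary maximal independent set of $G(n,n^{-\alpha})$) might well swallow some $N(v)$. Reserving the part $V_2$ and its incident edges for the second stage is exactly what lets us hand every vertex a neighbour in the ``untouched'' set $\bar X\cap V_2$ --- or, for those few vertices of $V_2$ with no neighbour in $I$, a neighbour back inside $V_1\setminus I$. The step about $\bar X$-outside-dominating vertices is the other place where a real estimate is needed, but it reduces to the standard first-moment bound on the independence number of $G|_{N(v)}$.
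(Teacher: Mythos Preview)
Your argument is correct. The two-stage exposure with the reserved set $V_2$ handles all the conditioning cleanly: since $I$ depends only on $G|_{V_1}$, the edges between $V_1$ and $V_2$ (and within $V_2$) are still fresh when you define $W$ and $W'$, and this is exactly what you exploit in each of the four verifications. The bound $\alpha(G|_{N(v)})\le (c_\alpha+\varepsilon)n^{\alpha}\ln n$ for all $v$ follows from the first-moment estimate $\binom{n}{k}p^{k}(1-p)^{\binom{k}{2}}$, which for $k=cn^{\alpha}\ln n$ decays like $\exp(-\Theta(n^{\alpha}(\ln n)^{2}))$ once $c>2(1-2\alpha)$ (and is trivial for $\alpha\ge 1/2$ since then $\deg(v)=o(n^{\alpha}\ln n)$), so the union bound over $v$ is harmless; comparing with $|I|\sim 2(1-\alpha)n^{\alpha}\ln n$ gives the contradiction since $2(1-2\alpha)<2(1-\alpha)$.

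This is a genuinely different route from the paper. The paper splits into two regimes. For $\alpha<2/3$ it takes a constant-size independent set with no common neighbour (Lemma~\ref{maximal_extension}), extends it to a maximal independent set, and invokes Lemma~\ref{independent_neighbors} to rule out $\bar X$-inside-isolated vertices. For $\alpha\ge 2/3$ it builds $X$ by the greedy maximal-independent-set procedure and then, to show no $v\in\bar X$ has $N(v)\subset X$, runs a somewhat delicate calculation combining Chernoff bounds with the Hoeffding--Azuma concentration of $|X|$ around its mean. Your construction is uniform in $\alpha\in(0,1)$, avoids the case split, and replaces the Hoeffding--Azuma step by elementary first-moment and union-bound arguments; the price is the extra bookkeeping for the three subclasses $V_1\setminus I$, $V_2\setminus W$, $W\setminus W'$, but each check is short. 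The paper's approach, in turn, requires less quantitative control on $|X|$ in the small-$\alpha$ regime and leans on the structural Lemma~\ref{independent_neighbors} instead.
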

\begin{proof} If $\alpha<2/3$, then the statement follows from Lemmas~\ref{maximal_extension}~and~\ref{independent_neighbors}. Let $\alpha\geq 2/3$. The  cardinality $N$ of the largest independent set in $G(n,p)$ is given in Lemma \ref{ind_set}. Moreover, by Hoeffding-Azuma inequality, $\frac{N-{\sf E}N}{n^{1/2+\delta}}\stackrel{{\sf P}}\to 0$ for any $\delta>0$. In particular, ${\sf E}N\sim 2(1-\alpha)n^{\alpha}\ln n$.

We construct a maximal independent set $X$ by adding vertices step by step in the following way. At the first step, we put $v_1:=1$ into $X$ and remove $1$ and all neighbors of $1$ from $V$. At the $i$-th step  ($i\geq 2$), we put $v_i:=\min \{j:j\in V\}$ into $X$ and remove $v_i$ and all its neighbors from $V$. At the final step, we get $X=\{v_1,\ldots,v_{|X|}\}$ and $V=\varnothing$.

Consider the event $A$ that there is a vertex $v$ outside $X$ with $N(v)\subset X$. For $i\in\{1,\ldots,|X|\}$, let $A_i$ be the event ``there is a neighbor of $v_i$ such that all its neighbors are in $X$''. We clearly have $A=\cup A_i$. For each step $i$ and $v\in N(v_i)$ all edges between $v$ and $X_i:=X\setminus\{v_1,\ldots,v_i\}$ appear mutually independently with probability $p$.
For any $c>0$ and $n$ large enough we have
$$
{\sf P}\big(\exists u\,\,(u\sim v_i)\wedge(N(u)\subset X)\big)\leq n{\sf P}(N(v)\subset X)\leq
$$
$$
n\left[1-{\sf P}\left(|N(v)\cap X_i|<(c+1)p|X_i|,|N(v)|>\frac{1}{2}n^{1-\alpha},|X_i|<2n^{\alpha}\ln n\right)\right].
$$
Let $c=\frac{8}{1-\alpha}$.
By the Chernoff bound,
$$
{\sf P}(|N(v)\cap X_i|\geq (1+c)p|X_i|)\leq e^{-\frac{3}{1-\alpha}{p|X_i|}}\quad \text{and}\quad
{\sf P}\left(|N(v)|\leq\frac{1}{2}n^{1-\alpha}\right)\leq e^{-\frac{1}{8}n^{1-\alpha}}.
$$
By Hoeffding-Azuma inequality,
$$
{\sf P}\left(|X|\geq 2n^{\alpha}\ln n\right)\leq e^{-(4\alpha)^2(\ln n)^2n^{2\alpha-1}(1+o(1))},
$$
$$
{\sf P}\left(|X|< (1-\alpha)n^{\alpha}\ln n\right)\leq e^{-(1-\alpha)^2(\ln n)^2n^{2\alpha-1}(1+o(1))}.
$$
We have the following chain of inequalities:
$$
1-{\sf P}\left(|N(v)\cap X|<2p|X_i|,|N(v)|>\frac{1}{2}n^{1-\alpha},|X_i|<2n^{\alpha}\ln n\right)\leq
$$
$$
{\sf P}\left(|X_i|\geq 2n^{\alpha}\ln n\right)+
{\sf P}\left(|N(v)|\leq\frac{1}{2}n^{1-\alpha}\right)+
{\sf P}\left(|X|< (1-\alpha)n^{\alpha}\ln n\right)+
$$
$$
{\sf P}\left(\left.|N(v)\cap X_i|\geq \left(1+\frac{8}{1-\alpha}\right)p|X_i|\right||X_i|\geq (1-\alpha)n^{\alpha}\ln n\right){\sf P}\left(|X_i|\geq (1-\alpha)n^{\alpha}\ln n\right)\leq
$$
$$
e^{-(4\alpha)^2(\ln n)^2n^{2\alpha-1}(1+o(1))}+
e^{-\frac{1}{8}n^{1-\alpha}}+e^{-(1-\alpha)^2(\ln n)^2n^{2\alpha-1}(1+o(1))}+e^{-3\ln n},
$$
where the last inequality holds due to the independence of $N(v)$ and $X_i$. Finally, we get
$$
{\sf P}(A)\leq\sum_{i=1}^n{\sf P}(A_i)\leq
n^2 e^{-3\ln n(1+o(1))}\to 0.
$$
\vskip-0.4cm
\end{proof}

We go on to the analysis of types 7 -- 12.  By Theorem~\ref{threshold} (note that it is also holds for induced subgraphs), a.a.s. there are $v_1,v_2,v_3\in V_n$ such that $v_1\sim v_2,v_2\sim v_3,v_1\nsim v_3$. Put $X:=\{x_1,x_3\}$. By Lemma~\ref{no_dense}, a.a.s. the set $\bar X$ is common. By Lemmas~\ref{degrees},~\ref{extension}, a.a.s. $X$ has type 7.

Let $s=\lfloor 1/\alpha\rfloor+1$. By Lemma~\ref{maximal_extension}, there exists an independent set $X\subset V$ such that $|X|=s$ and there are no common neighbors of vertices of $X$ in $G(n,p)$. By Lemma~\ref{degrees}, the set $\bar{X}$ is common. By Lemmas~\ref{degrees},~\ref{extension}, a.a.s. the set $X$ has type 8.

By Lemma~\ref{no_existence}, if $\alpha\geq 1/3$, then a.a.s.  there is no sets of type 9 in $G(n,p)$. If $\alpha<1/3$, then by Lemmas~\ref{independent_neighbors} and \ref{no_existence} a.a.s. $G(n,p)$ contains a set of type 9.

The set $X$ of type 10 exists by Lemmas \ref{sizes} and \ref{existence}.

If $\alpha<2/3$, then a.a.s. in $G(n,p)$ there are no sets of types 11, 12 by Lemma~\ref{independent_neighbors}.
If $\alpha\geq 2/3$, then, by Lemmas~\ref{independent_neighbors}~and~\ref{no_sparse}, in $G(n,p)$ there is a set $X$ of type 12. Obviously, a.a.s. it has a subset of type 11 as well.\\

{\bf Types 13 -- 26.}

Let $\alpha\ge 1/2$. Applying Lemma \ref{no_neighb_ext} as in the proof of Lemma \ref{independent_neighbors}, we get that there are four vertices $v_1,\ldots, v_4$, that form an independent set, and such that no two of them share a common neighbor. Split $V_n\setminus (\cup_{i=1}^4 N(v_i))$ randomly into two almost equal parts $V_1$ and $V_2$. By Lemma~\ref{degrees}, a.a.s. any vertex $v\in V_n\setminus\{v_1,v_2,v_3,v_4\}$ has both neighbors and non-neighbors in both $V_1$ and $V_2$.

To prove the a.a.s. existence of type 13, put $X:= \{v_1\}\cup N(v_2)\cup \{v_3\}\cup N(v_3)\cup V_1$. One can see that $v_1$ is $X$-inside-isolated, $v_3$ is $X$-outside-isolated, $v_2$ is $\bar X$-inside-isolated, and $v_4$ is $\bar X$-outside-isolated.

For type 14, put $X:=\{v_1,v_2,v_4\}\cup N(v_2)\cup N(v_3)\cup V_1$. For type 15 put $X:= \{v_1,v_3\}\cup N(v_2)\cup N(v_4)\cup V_1$. For type 16 put $X:=\{v_1,v_2\}\cup N(v_2)\cup V_1$. For type 17 put $X:= \{v_1,v_2,v_3,v_4\}\cup N(v_2)\cup V_1$. For type 22 put $X:= \{v_1\}\cup N(v_1)\cup V_1$.\\

The types 18, 19 and 24 are even easier to obtain. Take two nonadjacent vertices $u_1,u_2$, and split the set $V_n\setminus \big(N(u_1)\cup N(u_2)\big)$ into two almost equal parts $U_1,U_2$. Once again, a.a.s., any other vertex has both neighbors and non-neighbors in both $U_1$ and $U_2$.

To obtain type 18, put $X:= \{u_1,u_2\}\cup U_1$. To obtain type 19, put $X:= \{u_1\}\cup U_1$. To obtain type 24, put $X:= U_1$.

To obtain type 23, simply split randomly $V$ into two almost equal parts and choose $X$ to be equal to one of these parts. Then a.a.s. all vertices are $X$-common and $\bar X$-common.\\

We are left to deal with types 20, 21, 25, 26. Let $\alpha<1/2$. Take any vertex $v$ and its neighbor $w$. A.a.s., $|N(w)\cap N(v)| = o(|N(v)|)$. Put $X:= N(v)\setminus N(w)$. Then $X$ is sparse, it has a dominating vertex $v$, which is a.a.s. not special. Moreover, a.a.s. $|X|=\Theta (n^{1-\alpha})$, which by Lemma \ref{no_sparse} means that any vertex from $\bar X\setminus \{v\}$ is $\bar X$-outside-common. By Lemma~\ref{no_dense}, $X$ is of type 20.

To obtain a set of type 25, consider any vertex $v$ and the set of its neighbors $N(v)$. Put $X:=N(v)\setminus \{w\}$, where $w$ is a neighbor of $v$. A.a.s., $\deg_X(v)>0$ for each $v\in V$ by Lemma~\ref{degrees}. This implies that a.a.s. there is no $X$-inside-isolated vertex.    Moreover, there are no $\bar X$-outside-isolated vertices by Lemma \ref{no_sparse}. Therefore, $X$ is a.a.s. of type 25.

Let $\alpha \in (0,1)$. By Theorem~\ref{threshold} and Lemma~\ref{extension}, the graph $G(n,p)$ a.a.s. contains an induced subgraph, isomorphic to a triangle with a hanging edge: $v_1,v_2,v_3,v\in V$, $v_i\sim v_j$, $v\sim v_1, v\nsim v_2,v\nsim v_3.$ Put $X:=\{v,v_2,v_3\}$. Then $X$ is sparse, and it has a dominating vertex $v_1$. Moreover, by Lemma \ref{no_sparse}, a.a.s. there are $\bar X$-outside-isolated vertices and there are no $X$-outside-isolated vertices. That is, $X$ is a.a.s. of type 21.

Let $X$ be of type 26 and let $v$ be $X$-dominating. Then the subgraph, induced on $X\cup\{v\}$, has $|X|+1$ vertex and at least $\lceil\frac 32 X\rceil$ edges (each vertex of $X$ has degree at least 1 in $X$). Moreover, $|X|>3$. It is not difficult to see that there is a subset $X_0$ in $X$ such that $|X_0|=4$ and the graph induced on $X_0\cup\{v\}$ contains two triangles sharing exactly one vertex. Thus, $G(n,p)|_{X_0\cup\{v\}}$ has density at least $6/5$, and by Theorem \ref{threshold}, $G(n,p)$ a.a.s. does not contain such subgraphs for $\alpha>5/6$.

If $\alpha<5/6$, the subgraph (denote it by $G|_X$) described in the previous subgraph do a.a.s. exist in $G(n,p)$. Then $X$ is common, it has a dominating vertex $v$, and a.a.s. it has $\bar X$-outside-isolated vertices. Therefore, $X$ is of type 26. Table \ref{Table} is verified completely.\\

\subsubsection{Special vertices}


Assume that $X$ has a special vertex $v$. If $v$ is special for $X$, it is special for $\bar X$ as well. Therefore, we may assume that $v\in X$.

We have two cases to consider: the case when $v$ is $X$-isolated and $\bar X$-dominating, and the case when $X$ and $\bar X$ are interchanged. By Lemma \ref{no_dense}, there are no other $X$-dominating or $\bar X$-dominating vertices, as well as other special vertices for $X$.

\textbf{Case 1} If $v$ is $X$-isolated and $\bar X$-dominating, then,  by Statement 1 of Lemma \ref{no_dense}, there are no other $X$-isolated vertices. In particular, all vertices of $\bar X$ are $\bar X$-outside-common.

\textbf{Case 2} If $v$ is $X$-dominating and $\bar X$-isolated, then
there are no other $\bar X$-isolated vertices, as well as $X$-inside-isolated vertices.

We summarize the remaining cases in a smaller table, resembling Table \ref{Table}. Remark that the type of $X$ is determined by the type of the special vertex, so it is not listed in Table \ref{Table2}. Moreover, we list outside types of not special vertices only. We remark that in this case the limitations $\alpha\in [1/2,1)$ are explained by the application of Lemma \ref{no_dense}: for $\alpha<1/2$ there are no two vertices $u,v$, such that $u$ is $X$-isolated and $v$ is $\bar X$-isolated.

As in the case of Table \ref{Table}, we need to prove that the types listed in Table \ref{Table2} are a.a.s. present in $G(n,p)$ within the ranges present, and that the other types a.a.s. do not appear. Note that some of the cases have $\varnothing$ as the range of $\alpha$, which means that they a.a.s. do not appear for any $\alpha$. We have left them in the table since they are not ruled out by the previous considerations.

\begin{table}[!ht]
\centering
\begin{tabular}{|c|c|c|c|c|}

  \hline

   &&Type of $\bar{X}$   &  Outside types of vertices in $X$ $|$ $\bar{X}$  & $\alpha$  \\

\hline

  Case 1&1.1 & sparse  & common, isolated $|$ common  & $[1/2,1)$ \\

  &1.2& sparse  & common $|$ common  & $\varnothing$ \\
  \cline{2-4}
&1.3& independent  & common, isolated $|$ common  & $[2/3,1)$ \\

 &1.4 & independent  & common $|$ common  & $\varnothing$ \\
  \cline{2-4}

&1.5& common  & common, isolated $|$ common  & $\varnothing$ \\

  &1.6& common & common $|$ common  & $(0,1/2)$ \\
  \hline
  Case 2 &2.1& common  & common $|$ common, isolated  & $[1/2,1)$ \\

\cline{2-4}
 &2.2 & common  & common $|$ common  & $(0,1/2)$ \\
 \hline

\end{tabular}
\vspace{-0.2cm}

\caption{\small Types of pairs of subsets.}
\label{Table2}
\end{table}
\vskip+0.2cm

Actually, Lemma \ref{no_neighb_ext} is sufficient to explain most of Table \ref{Table2}, via Corollary~\ref{common_neighbors2}.

Choose $\alpha\in [1/2,1)$ and take a vertex $v$ that lies in a triangle guaranteed by Lemma \ref{clicks}. Putting $\bar X:=N(v)$, the corollary above  guarantees an $X$-outside-isolated and an $\bar X$-inside-isolated vertex. Thus, 1.1 is explained, as well as 1.2: type 1.2 is impossible for $\alpha<1/2$ since $\bar X$ cannot be sparse, and it is impossible for $\alpha\ge 1/2$ since then there must be an $X$-outside-isolated vertex. Similarly,  types 1.4 and 1.5 are ruled out as well. The explanation in case 2.1 is the same as in case 1.1.

We are left with types 1.3, 1.6 and 2.2. The existence of 1.6 and 2.2 is simple: since for $\alpha<1/2$ we neither can have $X$-isolated nor $\bar X$-isolated vertices other than the special vertex, taking a vertex $v$ and putting $\bar X:=N(v)$ and $X:=N(v)\cup\{v\}$, we get  types 1.6 and 2.2, respectively. For $\alpha\ge 1/2$, the existence of either $X$-isolated or $\bar X$-isolated vertices is guaranteed by Corollary \ref{common_neighbors2}, so these types do not appear for $\alpha\ge 1/2$.

The a.a.s. existence of type 1.3 within the range $\alpha\in [2/3,1)$ (as well  as its a.a.s. nonexistence outside this range) is explained by Lemma~\ref{independent_neighbors} combined with Corollary \ref{common_neighbors2}.

\subsection{Pairs of subsets for $\alpha>1$}\label{alpha>1}
 We may suppose that $\alpha = 1+\frac 1{\ell}$ for $\ell\ge 7, \ell \ne 8$. Assume that Spoiler chooses a set $X\subset V(G)$ (the case that it chooses $X\subset V (H)$ is symmetric
of course). Then Duplicator chooses a set of vertices $Y\subset V(H)$. It is sufficient to choose $Y$, such that the pair $Y,\bar Y$ is of the same type as $X, \bar X$ (see Section \ref{sec413}). Below, we describe the algorithm which Duplicator utilizes to achieve its goal. For $Z\in\{X,\bar X\}$ put $Y(Z):=Y$ if $Z=X$ and $Y(Z):= \bar Y$ if $Z=\bar X$. In what follows, Duplicator makes each of his steps based on the information about \textit{both} choices of $Z$ \textit{simultaneously}. Recall that we assume that $X$ is nontrivial ($|X|\ge 2$ and $|\bar X|\ge 2$).


First, Duplicator checks, whether
there are some isolated vertices in $Z$. Denote the set of isolated vertices in a graph $W$ by $I(W)$. Duplicator splits $I(H)$ between $Y$ and $\bar Y$ according to the rule: for both choices of $Z$ we have $I(G)\cap Z\ne \varnothing$ iff $I(H)\cap Y(Z)\ne \varnothing$.

Let us call connected components of a graph with at least one edge \textit{nonempty components}. Next, Duplicator checks, whether there is a nonempty component $C$ of $G$ such that $C\cap Z=\varnothing$.
Depending on the outcome, there are the following two cases.\\

\textbf{Case 1} Assume each nonempty component of $G$ intersects both $X$  and $\bar X$. Since a.a.s. $G$ has at least two components, neither there exists $X$-dominating, nor $\bar X$-dominating vertex. Moreover, since $G(n,p)$ a.a.s. contains connected components that are edges,
these edges in $G$ are split between $X$ and $\bar X$, and so both  contain inside-isolated, outside-common vertices.


If neither $X$ nor $\bar X$ contains any inside-common vertices, then basically $X$, $\bar X$ define a proper two-coloring of $G$. In this case Duplicator chooses any proper two-coloring (that is, a coloring of the vertices into two colors without monochromatic edges) of the nonempty components of $H$ and puts $Y$ to be one of these classes (together with the possibly chosen isolated vertices).

Denote a simple path on $n$ vertices  by $P_n$. Then $\rho^{max}(P_n)=1-\frac 1n$.
Assume that $Z$ contains an inside-common, outside-isolated vertex (which implies that $Z$ has an inside-common, outside-common vertex as well, due to the fact that each connected component of $G$ has nonempty intersection with both $X$ and $\bar X$). Then $G$ contains $P_3$, which is only possible if $\ell\geq 2$, and so there are components of $H$ that are isomorphic to $P_3$. In that case Duplicator takes such a path and puts two consecutive vertices into $Y(Z)$, while the third vertex into its complement.

If $Z$ contains an inside-common, outside-common vertex, but no inside-common, outside-isolated vertex, then $G$ a.a.s. contains $P_4$, which is only possible if $\ell\geq 3$, and so (by \textbf{T3}(3)) there are components of $H$ isomorphic to $P_4$. In that case Duplicator takes such a component and puts two inner vertices into $Y(Z)$ and its two leaf vertices into $Y(\bar Z)$.

After these two steps both $Y$ and $\bar Y$ have the same types of inside-common vertices as $X$ and $\bar X$, respectively. The remaining all but at most two nonempty components of $H$ are then properly colored in two colors and split between $Y$ and $\bar Y$. It is not difficult to see that at the end the pair $Y, \bar Y$ has the same type as $X, \bar X$.\\

\textbf{Case 2} Assume that there is a component $C$ such that $C\cap Z=\varnothing$. We may w.l.o.g. assume that $Z=X$. If this holds for both $X$ and $\bar X$, then we assume that $X$ has fewer connected components (which implies that in any case $\bar X$ is not connected). Therefore, $C\subset \bar X$ and each vertex in $C$ is $\bar X$-outside-isolated. This implies that $\bar X$ a.a.s. contains inside-common, outside-isolated vertices (recall that we do not care about inside-isolated, outside-isolated vertices, since they are simply isolated vertices in $G$ and were treated in the beginning of Section~\ref{alpha>1}). Moreover, $\bar X$ does not belong to one connected component, and so does not have dominating vertices. This implies that the only ambiguity concerning the types of vertices in $\bar X$ is which outside-common and which outside-dominating vertices it has.

In what follows, we describe a strategy of Duplicator to construct step by step a forest $T$  and a subset $Y'$ of its vertices such that, first, for some union $F$ of components in $H$ the graph  $H|_F$ is isomorphic to $T$, and, second, for the subset $Y\subset F$ that corresponds to $Y'$, the pair $Y,\bar Y$ is a pair of the same type as $X,\bar X$.


Note that $T$ is  an {\it abstract} graph, which a priori has no relation to $H$. First, Duplicator checks the type of $X$:
\begin{itemize}\setlength\itemsep{0em} \item if $X$ is complete, then Duplicator takes an edge;\item
if $X$ is dense, then Duplicator takes a $P_3$;\item
if $X$ is common, then Duplicator takes two disjoint edges;\item
if $X$ is independent, then Duplicator takes two isolated vertices.\end{itemize}
The vertices produced on this step form the set $Y'$.

Next, Duplicator checks the types of vertices inside $X$:\begin{itemize}\setlength\itemsep{0em} \item
if all the vertices \textit{of a given inside type} in $X$ are outside-common, then he attaches a new edge to each vertex that has this type in $Y'$;\item
if some, but not all of the vertices of a given inside type in $X$ are outside-common, then he attaches a new edge to \textit{exactly one} vertex of $Y'$ of this inside type.\end{itemize}
The vertices produced on this step form the set $A_1$.

Finally, Duplicator checks the outside types of vertices in $\bar X$: \begin{itemize}\setlength\itemsep{0em}
\item
if there is an $X$-dominating vertex in $\bar X$ (which is possible only in the case if $X$ is an independent set), then he adds a vertex and connects it to each of the vertices of $Y'$. He adds an extra vertex and connects it with an edge to the dominating vertex iff it is $\bar X$-inside-common;\item
if all the outside-common vertices in $\bar X$ are inside-common, then he attaches a new edge to each vertex from $A_1$;\item
if some outside-common vertices in $\bar X$ are inside-common and some are inside-isolated, then he attaches a new edge to one (no matter which one) vertex in $A_1$.\end{itemize}
The vertices added at this step form the set $A_2$.
In the last step we may run into a small trouble: if $|A_1|=1$, then we cannot have both types of outside-common vertices in $A_1$. In this case we check the type of the vertex in $Y'$ that is connected to the only vertex in $A_1$ and connect a new edge to \textit{a vertex $v\in Y'$ that has the same type and lies in (one of) the smallest component of the graph induced on the set $Y'\cup A_1\cup A_2$}. The set added at this ``trouble'' step we call $A_3$. Note that $|A_3|\le 1$. Note that if $|A_1|=0$, then there are no outside-common vertices in $\bar X$.\\
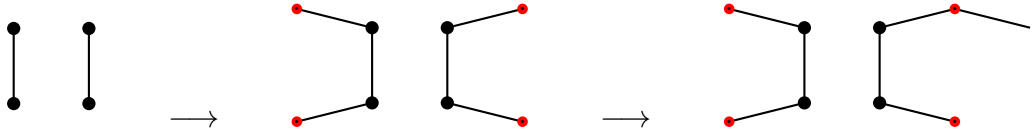
\begin{figure}
\begin{center}
\begin{tikzpicture}[scale=0.5]


\draw[thick] (-1,-1) -- (-1,1);
\draw[thick] (1,-1) -- (1,1);
\node[fill=black, circle, inner sep = -0pt, minimum size=5pt] at (-1,-1) {.};
\node[fill=black, circle, inner sep = -0pt, minimum size=5pt] at (1,-1) {.};
\node[fill=black, circle, inner sep = -0pt, minimum size=5pt] at (-1,1) {.};
\node[fill=black, circle, inner sep = -0pt, minimum size=5pt] at (1,1) {.};
\node[white,fill=white, circle, inner sep = -0pt, minimum size=0pt] at (-1,-1.5) {.};

\end{tikzpicture}
\ \ \ \ \ \ $\longrightarrow$ \ \ \ \ \ \ \begin{tikzpicture}[scale=0.5]


\draw[thick] (-3,-1.5)--(-1,-1) -- (-1,1)--(-3,1.5);
\draw[thick] (3,-1.5)--(1,-1) -- (1,1)--(3,1.5);
\node[fill=black, circle, inner sep = -0pt, minimum size=5pt] at (-1,-1) {.};
\node[fill=black, circle, inner sep = -0pt, minimum size=5pt] at (1,-1) {.};
\node[fill=black, circle, inner sep = -0pt, minimum size=5pt] at (-1,1) {.};
\node[fill=black, circle, inner sep = -0pt, minimum size=5pt] at (1,1) {.};
\node[fill=red, circle, inner sep = -0pt, minimum size=4pt] at (-3,-1.5) {.};
\node[fill=red, circle, inner sep = -0pt, minimum size=4pt] at (3,-1.5) {.};
\node[fill=red, circle, inner sep = -0pt, minimum size=4pt] at (-3,1.5) {.};
\node[fill=red, circle, inner sep = -0pt, minimum size=4pt] at (3,1.5) {.};

\end{tikzpicture}
\ \ \ \ \ \ $\longrightarrow$ \ \ \ \ \ \ \begin{tikzpicture}[scale=0.5]


\draw[thick] (-3,-1.5)--(-1,-1) -- (-1,1)--(-3,1.5);
\draw[thick] (3,-1.5)--(1,-1) -- (1,1)--(3,1.5)--(5,1);
\node[fill=black, circle, inner sep = -0pt, minimum size=5pt] at (-1,-1) {.};
\node[fill=black, circle, inner sep = -0pt, minimum size=5pt] at (1,-1) {.};
\node[fill=black, circle, inner sep = -0pt, minimum size=5pt] at (-1,1) {.};
\node[fill=black, circle, inner sep = -0pt, minimum size=5pt] at (1,1) {.};
\node[fill=red, circle, inner sep = -0pt, minimum size=4pt] at (-3,-1.5) {.};
\node[fill=red, circle, inner sep = -0pt, minimum size=4pt] at (3,-1.5) {.};
\node[fill=red, circle, inner sep = -0pt, minimum size=4pt] at (-3,1.5) {.};
\node[fill=red, circle, inner sep = -0pt, minimum size=4pt] at (3,1.5) {.};
\end{tikzpicture}
\end{center} \caption{An example of the three stages of construction of $T$. A set $X$ is common. All inside-common vertices in $X$ are outside common. Some $\bar X$-outside-common vertices are $\bar X$-inside common, and some are $\bar X$-inside isolated. The vertices of $X$ are marked by big black circles, and the vertices of $A_1$ are marked by smaller red circles.}\label{fig4}
\end{figure}
Let $T$ be the forest on $Y'\cup A_1\cup A_2\cup A_3$ with the edges described above. It is clear by the definition of $T$ that, if a set of connected components $C$ in $H$ is isomorphic to $T$ and $Y$ is a subset of $V(H)$ that corresponds to $Y'$, then the pair $Y,\bar Y$ has the same type as $X,\bar X$. Note that at each step we increased the size of the components by the least possible value. This means that in $G$ there is a connected component $F$ such that $v(F)\geq v(T')$, where $T'$ is the largest component of $T$. So, $\ell\geq v(T')-1$. Lastly, it is not difficult to check that $v(T')$ is either at most $7$, or equal to $9$. We note that $9$ is only possible if we start with $P_3$, attach one edge to each of the vertices of $X$ on the second step, and attach one more edge to the $A_1$-vertices on the last step. This leads to the tree that is depicted on Figure~\ref{figt}.  In this case, $T$ is a tree, and so $T=T'$. 

It remains to prove that, a.a.s., in $H$, there exists an induced copy of $T$ which is a union of components in $H$. For $\ell\ge 9$, the property T3$(\ell-1)$ implies that a.a.s., for every given tree of size at most 9, there exists a connected component of $H$ isomorphic to this tree. This immediately implies the a.a.s. existence of a desired copy of $T$. If $\ell=7$, then by T2$(\ell)$ a.a.s. there is no component of size at least 9 in $G$, and thus $T'$ has at most $7$ vertices. The property T3$(\ell-1)$, in turn, implies that a.a.s. there exists a connected component of $H$ isomorphic to $T'$ as well as components isomorphic to all the other trees in $T$. Therefore, a desired set $Y$ exists.\\

{\sc Acknowledgements:} We would like to thank the anonymous referee for numerous useful comments on the presentation of the paper.

\end{document}